\documentclass[11pt]{amsart}
\usepackage{verbatim,amssymb,amsmath,mathrsfs,graphicx,hyperref}
\usepackage{xcolor}
\setlength{\textwidth}{16cm} \setlength{\voffset}{1.25cm}
\addtolength{\headheight}{3.5pt} \frenchspacing \scrollmode

\addtolength{\hoffset}{-1.75cm} \addtolength{\voffset}{-0.75cm}

\newcommand{\R}{{\mathbb R}}

\newcommand{\N}{{\mathbb N}}
\newcommand{\Z}{{\mathbb Z}}
\newcommand{\EE}{{\mathbb E}}
\newcommand{\PP}{{\mathbb P}}
\newcommand{\C}{{\mathbb C}}

\newcommand{\Wc}{\mathcal W}
\newcommand{\Wcpi}{\mathcal W^\tau}
\newcommand{\Wtpi}{\widetilde W^\tau}
\newcommand{\Wtt}{\widetilde W^{\tau^*}}
\newcommand{\Ztpi}{\widetilde Z^\tau}

\newcommand{\Xtpi}{\widetilde X^\tau}

\newcommand{\mil}{Y^{\text{M}}}

\newcommand{\usn}{\underline {s}_n}
\newcommand{\utn}{\underline {t}_n}

\newcommand{\sgn}{\operatorname{sgn}}
\newcommand{\eps}{\varepsilon}
\newcommand{\teps}{\widetilde \varepsilon}

\newcommand{\Fc}{\mathcal{F}}

\newcommand{\tX}{\widetilde X}
\newcommand{\tY}{\widetilde Y}
\newcommand{\tW}{\widetilde W}

\newcommand{\bi}{\mathbf i}
\newcommand{\tpi}{\widetilde \tau}
\newcommand{\tPi}{\widetilde{\mathcal T}}

\newcommand{\tXp}{\widetilde X^{\tau}}
\newcommand{\tWp}{\widetilde W^{\tau}}
\newcommand{\tYp}{\widetilde Y^{\tau}}

\newcommand{\fh}{\widehat f}

\newcommand{\mus}{\mu_{\alpha, \beta}}
\newcommand{\fs}{f_{\alpha, \beta}}

\theoremstyle{plain}
\newtheorem{theorem}{Theorem}
\newtheorem{prop}{Proposition}
\newtheorem{lemma}{Lemma}

\theoremstyle{definition}

\begin{document}

\title[
Approximation of SDEs with a drift coefficient of H\"older or Sobolev regularity]{
	Sharp lower error bounds  for strong approximation of SDEs with a drift coefficient of H\"older or Sobolev regularity
	using a Weierstra{\ss} scale}

\author[Ellinger]
{Simon Ellinger}
\address{
	Faculty of Computer Science and Mathematics\\
	University of Passau\\
	Innstrasse 33 \\
	94032 Passau\\
	Germany} \email{simon.ellinger@uni-passau.de}

\author[M\"uller-Gronbach]
{Thomas M\"uller-Gronbach}
\address{
	Faculty of Computer Science and Mathematics\\
	University of Passau\\
	Innstrasse 33 \\
	94032 Passau\\
	Germany} \email{thomas.mueller-gronbach@uni-passau.de}

\author[Yaroslavtseva]
{Larisa Yaroslavtseva}
\address{
	Institute of Mathematics and Scientific Computing\\
	University of Graz\\
	Heinrichstra{\ss}e 36 \\
	8010 Graz\\
	Austria} \email{larisa.yaroslavtseva@uni-graz.at}

\begin{abstract} 
	In the present article, we study  strong approximation of solutions of scalar  stochastic differential equations (SDEs) with bounded and $\alpha$-H\"older continuous drift coefficient and constant  diffusion coefficient  at 
	time point $1$.
	Recently, it was  shown in~\cite{BDG21} that for such SDEs the equidistant Euler scheme achieves an $L^p$-error rate of at least $(1+\alpha)/2$, up to an arbitrary small $\varepsilon$, for all $p\geq 1$ and all $\alpha\in(0, 1]$,
	in terms of the number of evaluations of the driving Brownian motion $W$.
	In this article, we 
	prove a matching  lower error bound for $\alpha\in(0, 1)$.   More precisely, we show that for every $\alpha\in(0, 1)$,
	the $L^p$-error rate $(1+\alpha)/2$ of the Euler scheme in~\cite{BDG21} cannot be improved in general by  any numerical 
	method based on finitely many evaluations of $W$ at fixed time points
	in $[0,1]$. 
	Up to now, this result was known in the literature only for  $\alpha=1$. 
	Even stronger,
	we show that, for such SDEs, an 
	$L^p$-error rate better than $(1+\alpha)/2$ cannot, in general, be achieved, 
	even if in addition to the finitely many evaluations of 
	$W$, a finite number of integrals of  $W$ over fixed time intervals in $[0,1]$ may be used by an algorithm.
	In particular, Wagner–Platen type schemes are not superior to the Euler scheme with respect to the 
	$L^p$-error rate.
	
	Additionally, we extend a result from \cite{EMGY24} on sharp lower errror bounds for strong approximation of 
	SDEs with a bounded drift coefficient of fractional Sobolev regularity $\alpha\in (0,1)$ 
	and constant  diffusion coefficient  at 
	time point $1$.
	We prove that for every $\alpha\in (0,1)$, the $L^p$-error rate $ (1 + \alpha)/2$ 
	shown in \cite{DGL22} for the equidistant Euler scheme can, up to a logarithmic term, not be improved in general by any numerical method based on finitely many evaluations of $W$ 
	at fixed time points in $[0,1]$  
	and finitely many integrals of $W$ over fixed time intervals in $[0,1]$.
	This 
	lower bound
	was known from \cite{EMGY24} only for $\alpha\in (1/2,1)$, $p=2$ and numerical methods based on finitely many evaluations of $W$.
	
	Our results are the first lower bounds in the literature for the $L^p$-approximation of the solution $X$ of an SDE at a single time point by numerical methods based on finitely many evaluations as well as finitely many integrals of   $W$.  So far only lower bounds for the $L^p$-approximation of $X$ globally on the interval $[0,1]$ by such methods were known. 
	For the proof of our results we use variants of the Weierstrass function as a drift coefficient and we 
	extend
	the coupling of noise technique  recently introduced in~\cite{MGY23}
	to cover algorithms based on evaluations of $W$ as well as on integrals of $W$.

\end{abstract}

\maketitle

\section{Introduction and main results}\label{s3}
Consider a scalar additive noise driven  stochastic differential equation (SDE)
\begin{equation}\label{sde0}
	\begin{aligned}
		dX_t & = \mu(X_t) \, dt +  dW_t, \quad t\in [0,1],\\
		X_0 & = x_0
	\end{aligned}
\end{equation}
with deterministic initial value $x_0\in\R$, drift coefficient $\mu\colon\R\to\R$ and a one-dimensional driving
Brownian motion $W=(W_t)_{t\in[0,1]}$. Note that  the SDE~\eqref{sde0} has a unique strong solution if $\mu$ is  bounded and measurable, see~\cite{V80}.

In this article, we study the complexity of strong approximation of the solution $X$ of the SDE~\eqref{sde0} at the final time  $1$ by numerical methods based on finitely many evaluations and finitely many integrals of the driving Brownian motion $W$. 

We first consider the case where only finitely many evaluations of $W$ are used. A classical numerical method of this type is the Euler scheme  with $n$ equidistant  steps, given by 
$ X^{\text{E}}_{n,0} = x_0$ and
\[
X^{\text{E}}_{n,(i+1)/n} = X^{\text{E}}_{n,i/n}+ \mu\bigl( X^{\text{E}}_{n,i/n}\bigr)\cdot \frac{1}{n} + W_{(i+1)/n} -W_{i/n}
\]
for $i=1,\dots,n$.

Recently, it was proven  in~\cite{BDG21} that if the drift coefficient $\mu$ 
is bounded and $\alpha$-H\"older continuous with $\alpha\in(0,1]$, then the Euler scheme  achieves for all $p\in [1,\infty)$ an 
$L^p$-error
rate of at least $(1+\alpha)/2-$ in terms of the number of evaluations of $W$, i.e.,
for all $\eps\in (0,\infty)$ there exists $c\in (0,\infty)$ such that for all $n\in\N$,
\begin{equation}\label{eul}
	\EE\bigl[| X_1- X^{\text{E}}_{n,1}|^p\bigr]^{1/p} \le \frac{c}{n^{(1+\alpha)/2-\eps}}.
\end{equation}

This upper bound naturally leads to the question whether  $(1+\alpha)/2-$ is the best possible  $L^p$-error rate that can be achieved for approximating $X_1$ by 
numerical methods
based on finitely many evaluations of  $W$
at fixed time points in $[0,1]$, 
or whether there exists a method from this class that achieves  a better $L^p$-error rate than   $(1+\alpha)/2-$.

Up to now, the answer to this question was known in the literature only for $\alpha=1$.  
More precisely, it 
follows from more general results
in~\cite{m04} 
and ~\cite{hhmg2019} 
that if the
drift coefficient $\mu$  has bounded, continuous derivatives up to order $3$
on some open interval containing $x_0$
and satisfies 
\begin{equation}\label{cond3}
	\mu'(x_0)\neq 0,
\end{equation}
then 
the
best possible $L^1$-error rate that can be achieved by any numerical method based on finitely many evaluations of $W$
at fixed time points in $[0,1]$
is at most $1$, i.e., there exists $c\in (0,\infty)$ such that for all $n\in\N$,
\begin{equation}\label{lb5}
	\inf_{\substack{
			t_1,\dots ,t_n \in [0,1]\\
			g \colon \R^n \to \R \text{ measurable} \\
	}}	 \EE\bigl[|X_1-g(W_{t_1}, \ldots, W_{t_n})|\bigr]\geq  \frac{c}{ n}.
\end{equation}
The assumptions from~\cite{m04} 
and ~\cite{hhmg2019} 
are
in particular
satisfied for the SDE \eqref{sde0} with  $\mu=\cos$ and $x_0\in\R\setminus \{\pi k\mid k\in \mathbb Z\}$. Since this choice of
$\mu$ is bounded and Lipschitz continuous, we conclude that for all $p\in[1, \infty)$
the $L^p$-error rate  $(1+\alpha)/2-$ of the Euler scheme in \eqref{eul} can essentially  not be 
outperformed
in general by any  numerical method based on finitely many evaluations of $W$ if $\alpha=1$.

In the present article, we 
treat the case $\alpha\in (0,1)$. We show that for all
$\alpha\in (0,1)$
and  all $p\in[1, \infty)$ the $L^p$-error rate  $(1+\alpha)/2-$  of the Euler scheme in \eqref{eul}  
can essentially  not be 
improved
in general.
To be more precise,
for $\alpha\in
(0,1)$, let
\[
C^\alpha(\R)  = \biggl\{ f\colon \R\to\R\,\Bigl| \quad \sup_{x\not = y}\frac{|f(x)-f(y)|}{|x-y|^{\alpha}} <\infty \biggr\}
\]
be the 
space of  $\alpha$-H\"older continuous functions on $\R$. The 
first
result of this article is the following lower  bound.

\begin{theorem}\label{thm2}
	For
	every
	$\alpha \in (0,1)$
	there exist $c\in (0,\infty)$ and a bounded $\mu\in C^\alpha(\R)$ 
	such that  for all $n\in\N$,
	\begin{equation}\label{Mainlb}
		\inf_{\substack{
				t_1,\dots ,t_n \in [0,1]\\
				g \colon \R^n \to \R \text{ measurable} \\
		}}	 \EE\bigl[|X_1-g(W_{t_1}, \ldots, W_{t_n})|\bigr]\geq  \frac{c}{n^{(1+\alpha)/2}}.
	\end{equation}
\end{theorem}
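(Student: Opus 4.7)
The plan is to establish the lower bound via the coupling-of-noise technique of [MGY23], together with a suitably chosen Weierstrass-type drift that carries a nontrivial oscillation at every dyadic spatial scale. As in any derivation of a lower bound for strong approximation, the argument reduces to a two-point problem: given arbitrary query times $t_1, \ldots, t_n \in [0,1]$, I would construct, on a common probability space, two Brownian motions $W^{(1)}$ and $W^{(2)}$ that coincide on the finite set $\{t_1, \ldots, t_n\}$ but differ elsewhere, along with the corresponding strong solutions $X^{(1)}$ and $X^{(2)}$ of the SDE \eqref{sde0}. Since a measurable $g(W_{t_1}, \ldots, W_{t_n})$ must yield the same value on both copies, the triangle inequality gives
\[
\EE|X^{(1)}_1 - g(W^{(1)}_{t_1},\ldots)| + \EE|X^{(2)}_1 - g(W^{(2)}_{t_1},\ldots)| \geq \EE|X^{(1)}_1 - X^{(2)}_1|,
\]
so it suffices to exhibit a coupling, uniformly in $n$ and the $t_i$, with $\EE|X^{(1)}_1 - X^{(2)}_1| \geq c\, n^{-(1+\alpha)/2}$. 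The $L^p$ case then follows from the $L^1$ case by boundedness of $\mu$ and standard moment comparison (or a direct application of Jensen/Hölder against an a priori $L^q$ bound).

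For the drift I would take a Weierstrass variant of the form $\mu(x) = \sum_{k\geq 0} 2^{-k\alpha} \varphi(2^k x)$ with $\varphi$ a smooth, bounded, mean-zero, $1$-periodic function; this is bounded and lies in $C^\alpha(\R)$ for $\alpha \in (0,1)$. The heuristic fixing the rate is purely dimensional: a Brownian increment on a time interval of length $1/n$ has typical size $n^{-1/2}$, so the critical mode is the one at spatial scale $n^{-1/2}$, i.e.\ the dyadic block with $k \approx \tfrac12\log_2 n$, at which the drift has amplitude $\approx n^{-\alpha/2}$; integrated over a time interval of length $1/n$ this contributes exactly $n^{-(1+\alpha)/2}$, which matches the Euler upper bound \eqref{eul}. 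To implement this, given $t_1,\ldots,t_n$, I would use a pigeonhole argument to select an interval $[a,b]\subseteq [0,1]$ of length $\geq 1/(n+1)$ that contains no query times, let $W^{(1)}=W^{(2)}$ off $[a,b]$, and on $[a,b]$ couple the two Brownian motions so that they agree at $a$ and $b$ but their Brownian bridges differ in a prescribed, structured way (the coupling-of-noise construction of [MGY23], essentially a sign flip of the bridge on a subinterval, produces two paths whose occupation measures at the critical spatial scale differ in a quantitatively controlled manner).

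The main technical obstacle will be the lower bound on $\EE|X^{(1)}_1 - X^{(2)}_1|$. The upper estimate is routine (Gronwall plus boundedness of $\mu$), but matching from below requires showing that the contribution of the critical dyadic mode $k^\ast \approx \tfrac12\log_2 n$ is not cancelled by the other modes nor dissipated by the evolution on $[b,1]$. I would proceed by (i) expressing $X^{(1)}_1 - X^{(2)}_1$ as an integral of $\mu$ against the difference of occupation measures of the two solutions on $[a,b]$, plus an error controlled by the Lipschitz-type flow stability off $[a,b]$; (ii) splitting $\mu$ into low modes $k<k^\ast$, the critical mode $k=k^\ast$, and high modes $k>k^\ast$, showing that the low modes are too smooth on the scale $n^{-1/2}$ to see the difference between the two couplings and that the high modes average out in $L^2$ by the mean-zero periodicity of $\varphi$ together with Brownian-bridge smoothing estimates; and (iii) isolating the critical mode via an explicit computation of its contribution, exploiting that the Brownian bridge on $[a,b]$, of length $\asymp 1/n$, has fluctuations of order $n^{-1/2}$ which precisely resolve the period $2^{-k^\ast}\asymp n^{-1/2}$ of $\varphi(2^{k^\ast}\cdot)$ and therefore pick up its full $n^{-\alpha/2}$ amplitude. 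This last step, essentially a non-cancellation statement for the critical scale under the coupling of [MGY23], is where the main effort will go.
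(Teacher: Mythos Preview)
Your high-level strategy---coupling of noise, Weierstrass drift, critical dyadic scale $2^{k^\ast}\asymp n^{1/2}$---matches the paper's, but the single-interval pigeonhole reduction is where the argument breaks down. Your heuristic contains an arithmetic slip: an amplitude of $n^{-\alpha/2}$ integrated over a time interval of length $1/n$ gives $n^{-1-\alpha/2}$, not $n^{-(1+\alpha)/2}$. This is not cosmetic; it reflects the actual rate a single gap interval can produce. Lemma~\ref{lemf5NEW} shows precisely that the squared $L^2$ contribution of one interval $[t_{i-1},t_i]$ of length $\asymp 1/n$ is of order $(t_i-t_{i-1})^{2+\alpha}\asymp n^{-(2+\alpha)}$, i.e.\ an $L^1$ error of at most order $n^{-1-\alpha/2}$, off by a factor $n^{1/2}$ from the target.

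The missing half-power comes from \emph{accumulating independent contributions over $\asymp n$ intervals}. The paper does not pick a single gap; it refines the given discretisation $\pi$ to one in $\widetilde\Pi^n$ containing the equidistant grid $\{j/(4n)\}$, couples $W$ and $\widetilde W^\pi$ by replacing the Brownian bridge on \emph{every} subinterval by an independent copy, and then runs a recursive $L^2$ estimate (Lemma~\ref{lemf3NEW}) of the form
\[
\EE\bigl[|Y_{t_i}-\widetilde Y^\pi_{t_i}|^2\bigr] \ge \Bigl(1-\tfrac{c_1}{n}\Bigr)\EE\bigl[|Y_{t_{i-1}}-\widetilde Y^\pi_{t_{i-1}}|^2\bigr] + c_2(t_i-t_{i-1})^{2+\alpha} - \text{(lower order)},
\]
so that after iteration $\EE\bigl[|Y_1-\widetilde Y^\pi_1|^2\bigr] \gtrsim \sum_i (t_i-t_{i-1})^{2+\alpha} \gtrsim n\cdot n^{-(2+\alpha)} = n^{-(1+\alpha)}$, the correct square. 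Two further ingredients you omit are essential for this recursion to close: (a) the Zvonkin-type transformation $Y=G_\mu(X)$ (Lemma~\ref{lem2NEW}), which removes the drift and makes $Y-\widetilde Y^\pi$ a perturbed martingale, so the cross-term in the square can be controlled; and (b) an a priori \emph{upper} bound $\EE\bigl[|X_{t_i}-\widetilde X^\pi_{t_i}|^p\bigr] \lesssim n^{-p(1+\alpha)/2}$ supplied by a Milstein-type scheme (Proposition~\ref{prop1NEW}, estimate~\eqref{rst444}), which keeps the error terms in the recursion lower order and also allows the passage from $L^2$ to $L^1$ via Lemma~\ref{auxNEW}. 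Your proposed mode-by-mode splitting and sign-flip coupling on a single bridge, even if carried out carefully, would reproduce only the suboptimal single-interval rate $n^{-1-\alpha/2}$.
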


Theorem \ref{thm2} also 
provides an essentially matching  lower  bound for the upper  bound recently proven  in~\cite{DG18} for
strong 
approximation of SDEs \eqref{sde0} with bounded and measurable drift coefficient $\mu$
by the Euler scheme.
Indeed, it was shown in~\cite{DG18} that for such SDEs the Euler scheme  
achieves for all $p\in [1,\infty)$ an 
$L^p$-error
rate of at least $1/2-$ in terms of the number $n$ of evaluations of $W$, i.e., 
for all $\eps\in (0,\infty)$ there exists $c\in (0,\infty)$ such that for all $n\in\N$,
\begin{equation}\label{eul2}
	\EE\bigl[| X_1-  X^{\text{E}}_{n,1}|^p\bigr]^{1/p} \le \frac{c}{n^{1/2-\eps}}.
\end{equation}
Choosing $\alpha=2\eps$ in Theorem \ref{thm2} we obtain the lower bound $c/n^{1/2+\eps}$, and thus the $L^p$-error rate $1/2-$ of the Euler scheme in \eqref{eul2} can essentially  not be 
improved
in general by any numerical method based on finitely many evaluations of $W$ for SDEs \eqref{sde0} with bounded and measurable drift coefficient $\mu$.
Up to now, 
it was only known that an $L^p$-error rate better than $3/4$ cannot be achieved  in general  for such SDEs, see~\cite{MGY23, ELL24, EMGY24}.

	Theorem \ref{thm2} is a consequence of the following more general result proven in this article.
	\begin{theorem}\label{thm2a}
		For
		every
		$\alpha \in (0,1)$
		there exist $c\in (0,\infty)$ and a bounded $\mu\in C^\alpha(\R)$ 
		such that  for all $n\in\N$,
		\begin{equation}\label{Mainlba}
			\inf_{\substack{
					t_1,\dots ,t_n \in [0,1]\\
					g \colon \R^{2n} \to \R \text{ measurable} \\
			}}	 \EE\Bigl[\Bigl|X_1-g\Bigl(W_{t_1}, \ldots, W_{t_n}, \int_0^{t_1} W_s\, ds, \ldots,
			 \int_{0}^{t_n} W_s \,ds\Bigr)\Bigr|\Bigr]\geq  \frac{c}{n^{(1+\alpha)/2}}.
		\end{equation}
	\end{theorem}
	Thus, in general, an $L^p$-error rate better than $(1+\alpha)/2$ cannot be achieved  for strong  approximation of $X_1$, even when  numerical methods based on finitely many evaluations  and finitely many integrals of  $W$ are used.

	A classical numerical method  of this type is the Wagner-Platen scheme  with $n$ equidistant  steps, given by 
	$ X^{\text{WP}}_{n,0} = x_0$ and
	\begin{align*}
		X^{\text{WP}}_{n,(i+1)/n} & = X^{\text{WP}}_{n,i/n}+ \mu\bigl( X^{\text{WP}}_{n,i/n}\bigr)\cdot \frac{1}{n}+ W_{(i+1)/n} -W_{i/n}+\Bigl(\frac{1}{2}\mu\mu'+\frac{1}{4}\mu''\Bigr)\bigl( X^{\text{WP}}_{n,i/n}\bigr)\cdot \frac{1}{n^2}\\
		& \qquad +\mu'\bigl( X^{\text{WP}}_{n,i/n}\bigr)\cdot\int_{i/n}^{(i+1)/n} (W_s-W_{i/n})\, ds
	\end{align*}
	for $i=1,\dots,n$. It is well known that if the
	drift coefficient $\mu$  has bounded derivatives up to order
	$3$ 
	then the Wagner-Platen scheme   achieves for all $p\in [1,\infty)$ an 
	$L^p$-error
	rate of at least $3/2$ in terms of the number of evaluations  and integrals of  $W$, i.e.,
	there exists $c\in (0,\infty)$ such that for all $n\in\N$,
	\[
	\EE\bigl[| X_1- X^{\text{WP}}_{n,1}|^p\bigr]^{1/p} \le \frac{c}{n^{3/2}},
	\]
	see e.g. \cite[Proposition V.4]{MG02_habil}. Together with the lower bound  \eqref{lb5},
	 this shows that, if $\mu$ additionally satisfies \eqref{cond3},  then the class of numerical methods based on finitely many evaluations and finitely many integrals of   $W$ outperforms the class of methods based solely on finitely many evaluations of  $W$ by at least  $1/2$ 
	in terms of the   $L^p$-error rate. In contrast, as follows from Theorem \ref{thm2a} and \eqref{eul}, for bounded and 
	$\alpha$-Hölder continuous drift coefficients $\mu$, the latter two classes of algorithms are equivalent with respect to the 
	$L^p$-error rate, and using Wagner-Platen type schemes does not help to improve this rate.

	Theorem \ref{thm2a} is the first lower bound in the literature for the $L^p$-approximation of $X_1$ by numerical methods based on finitely many evaluations and finitely many integrals of   $W$.  So far only lower bounds for the $L^p$-approximation of $X$ globally on the interval $[0,1]$ by such methods were known, see
	\cite{HMGR2002,HMG2004}.

In this article, we furthermore study the complexity of 
strong
approximation of $X_1$ in the case when the drift coefficient $\mu$ of the SDE \eqref{sde0} has
fractional Sobolev regularity. To be more precise, 
for $\alpha\in (0,1)$ and $p\in [1,\infty)$ let
\[
\mathsf W^{\alpha,p}(\R) = \biggl\{ f\colon \R\to\R\,\Bigl| \, f \text{ is measurable and }\int_\R\int_\R \frac{|f(x)-f(y)|^p}{|x-y|^{1+\alpha p}}\, dx\, dy <\infty \biggr\}
\]
be the
space of 
real-valued
functions $f$
on $\R$
that have Sobolev regularity of order $\alpha$ with integrability exponent $p$. 

In~\cite{DGL22} it was shown that if $\mu$ is bounded and  	$\mu \in\mathsf W^{\alpha,p}(\R)$ for some $\alpha\in (0,1)$ and  $p\in [1,\infty)$, then 
the Euler scheme 
with equidistant steps
achieves  an 
$L^p$-error
rate of at least $(1+\alpha)/2-$ in terms of the number $n$ of evaluations of $W$, i.e.,
for all $\eps\in (0,\infty)$ there exists $c\in (0,\infty)$ such that for all $n\in\N$,
\begin{equation}\label{eul3}
	\EE\bigl[| X_1- X^{\text{E}}_{n,1}|^p\bigr]^{1/p} \le \frac{c}{n^{(1+\alpha)/2-\eps}}.
\end{equation}

Recently, 
in~\cite{EMGY24} an essentially  matching lower bound for  $\alpha\in(1/2, 1)$ and $p=2$
was proven. 
More precisely, 
it was shown in~\cite{EMGY24} that
for
every
$\alpha\in (1/2,1)$
there exist $c\in (0,\infty)$ and a bounded,
Lebesgue integrable
$\mu\in\mathsf W^{\alpha,2}(\R)$ 
such that  for all $n\in\N$,
\[
\inf_{\substack{
		t_1,\dots ,t_n \in [0,1]\\
		g \colon \R^n \to \R \text{ measurable} \\
}}	 \EE\bigl[|X_1-g(W_{t_1}, \ldots, W_{t_n})|^2\bigr]^{1/2}\geq  \frac{c}{ \ln (n+1)\cdot n^{(1+\alpha)/2}}.
\]
Furthermore, it follows from~\cite{ELL24}  that for the 
SDE \eqref{sde0} with $\mu=1_{[0,1 ]}$ 
the best possible $L^p$-error rate that can be achieved by any numerical method based on finitely many evaluations of $W$ 
at fixed time points in $[0,1]$
is at most $3/4$ for all $p\in[1, \infty)$, i.e., there exists $c\in (0,\infty)$ such that for all $n\in\N$,
\[
\inf_{\substack{
		t_1,\dots ,t_n \in [0,1]\\
		g \colon \R^n \to \R \text{ measurable} \\
}}	 \EE\bigl[|X_1-g(W_{t_1}, \ldots, W_{t_n})|^p\bigr]^{1/p}\geq  \frac{c}{ n^{3/4}}.
\]
Since $1_{[0,1]}\in \mathsf W^{\alpha,2}(\R)$ for all $\alpha\in(0, 1/2)$, see e.g.
\cite[Section 3.1]{Sickel2021},
this result yields that the $L^p$-error rate  $(1+\alpha)/2-$ of the Euler scheme in \eqref{eul}  can 
essentially  not be 
improved
in general   for $\alpha=1/2$
and $p=2$.

However, 
the sharpness of \eqref{eul3}  in the case $\alpha<1/2$ or $p\not=2$
remained an open question up to now. In the present article, we close this gap and show
that  in fact for all  $\alpha\in (0,1)$ and  all $p\in[1, \infty)$ the $L^p$-error rate  $(1+\alpha)/2-$  of the Euler scheme in \eqref{eul3}  
can essentially  not be 
improved
in general, even when  numerical methods based on finitely many evaluations  and finitely many integrals of  $W$ are used. More formally, the following lower bound
is our second main result.

\begin{theorem}\label{thm3}
	For every $\alpha \in (0,1)$, every  $p \in [1,\infty)$ and every $\eps \in (0, \infty)$ there exist $c\in (0,\infty)$ and  a bounded $\mu\in \bigcap_{q\ge \min(p,2)} \mathsf W^{\alpha,q}(\R)$ with $\mu\in C^\alpha(\R)$ and $\mu\in \bigcap_{q\ge 1} L^q(\R)$
	such that  for all $n\in\N$,
	\begin{equation}\label{Mainlb_Sobolev}
		\begin{aligned}
			& 		\inf_{\substack{
						t_1,\dots ,t_n \in [0,1]\\
						g \colon \R^{2n} \to \R \text{ measurable} \\
				}}	 \EE\Bigl[\Bigl|X_1-g\Bigl(W_{t_1}, \ldots, W_{t_n}, \int_0^{t_1} W_s \,ds, \ldots,
				\int_{0}^{t_n} W_s \, ds\Bigr)\Bigr|\Bigr] \\
			&\qquad\qquad\qquad\qquad\qquad\qquad	\geq  \frac{c}{ (\ln (n+1))^{\gamma_p + \eps}\cdot n^{(1+\alpha)/2}},
		\end{aligned}
	\end{equation}
	where $\gamma_p=\tfrac{2}{\min(p,2)^2}$.
\end{theorem}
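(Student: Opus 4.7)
The first step is to construct the drift as a lacunary Weierstrass-type series of spatially separated bumps,
\[
\mu(x) \;=\; \sum_{k=k_0}^{\infty} 2^{-k\alpha}\,\psi\bigl(2^k(x-x_k)\bigr),
\]
where $\psi\colon\R\to\R$ is smooth, compactly supported and has mean zero, and the centres $x_k$ are chosen so that the blocks $\operatorname{supp}\psi(2^k(\cdot-x_k))$ are pairwise disjoint with comparable gaps. Standard dyadic bookkeeping---telescoping at $2^k\sim|x-y|^{-1}$ for $C^\alpha$, and the scaling identity $\|\psi(2^k\cdot)\|_{W^{\alpha,q}}=2^{k(\alpha-1/q)}\|\psi\|_{W^{\alpha,q}}$ together with disjointness of supports for Sobolev and $L^q$---yields that $\mu$ is bounded and belongs to $C^\alpha(\R)\cap\bigcap_{q\geq 1}L^q(\R)\cap\bigcap_{q\geq 1}W^{\alpha,q}(\R)$, which is strictly stronger than the regularity claimed in Theorem \ref{thm3}.

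Next I would apply the coupling-of-noise technique of \cite{MGY23}. Given a grid $0\leq t_1<\cdots<t_n\leq 1$ and a measurable $g\colon\R^n\to\R$, pick $i^\ast$ with $\Delta:=t_{i^\ast+1}-t_{i^\ast}\geq 1/(n+1)$ and let $k^\ast$ be the integer with $2^{-k^\ast}\asymp\sqrt\Delta$. I would then build Brownian motions $W,\widetilde W$ on a common probability space such that $W_t=\widetilde W_t$ for $t\notin(t_{i^\ast},t_{i^\ast+1})$ and, conditioned on this shared skeleton, the two Brownian bridges on $[t_{i^\ast},t_{i^\ast+1}]$ are coupled so as to maximally decorrelate at spatial frequency $2^{k^\ast}$ around $x_{k^\ast}$, with $x_{k^\ast}$ placed near the typical value of $X_{t_{i^\ast}}$. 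Since $g(W_{t_1},\ldots,W_{t_n})=g(\widetilde W_{t_1},\ldots,\widetilde W_{t_n})$ almost surely and the laws of $(X,W)$ and $(\widetilde X,\widetilde W)$ coincide, the triangle inequality gives
\[
\EE\bigl[|X_1-g(W_{t_1},\ldots,W_{t_n})|^p\bigr]^{1/p}\;\geq\;\tfrac12\,\EE\bigl[|X_1-\widetilde X_1|^p\bigr]^{1/p},
\]
so it suffices to establish $\EE[|X_1-\widetilde X_1|^p]^{1/p}\geq c\,n^{-(1+\alpha)/2}(\ln(n+1))^{-\gamma_p-\varepsilon}$.

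For this lower bound I would write $X_1-\widetilde X_1=\int_0^1(\mu(X_s)-\mu(\widetilde X_s))\,ds$ and concentrate on the interval $[t_{i^\ast},t_{i^\ast+1}]$. To leading order $X_s-\widetilde X_s\approx W_s-\widetilde W_s$ there, and among all dyadic blocks of $\mu$ only the $k^\ast$-th contributes significantly: lower-frequency blocks are essentially constant along a bridge of size $\sqrt\Delta$ and higher-frequency blocks average out through oscillation. An explicit Brownian-bridge computation then produces an $L^p$ lower bound of order $\Delta^{(1+\alpha)/2}\asymp n^{-(1+\alpha)/2}$ for the interval integral, and a Gronwall-type estimate exploiting the $\alpha$-H\"older modulus of $\mu$ transfers this bound to $X_1-\widetilde X_1$, since $W$ and $\widetilde W$ agree outside the chosen interval.

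The main obstacle is the sharp single-frequency estimate on the Brownian bridge: one must show that the full Weierstrass-type series, not merely its truncation at level $k^\ast$, does not cancel the signal, uniformly in the random position of $X_{t_{i^\ast}}$; this is precisely where the pairwise-disjoint placement of the centres $x_k$ is decisive. A second, more technical obstacle is the $L^p$ versus $L^2$ trade-off for $p\neq 2$: at $p=2$ the key step reduces to a clean Fourier/It\^o-isometry computation, whereas for general $p$ one has to truncate the near-Gaussian random variables arising from the bridge and apply Markov's inequality on the complement. Careful optimization of the truncation threshold is what generates precisely the exponent $\gamma_p=2/\min(p,2)^2$ in the logarithmic correction of \eqref{Mainlb_Sobolev}.
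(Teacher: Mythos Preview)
Your single-interval coupling cannot deliver the rate you claim. On $[t_{i^\ast},t_{i^\ast+1}]$ you have $X_{t_{i^\ast}}=\widetilde X_{t_{i^\ast}}$ and $W_{t_{i^\ast+1}}=\widetilde W_{t_{i^\ast+1}}$, so $X_{t_{i^\ast+1}}-\widetilde X_{t_{i^\ast+1}}=\int_{t_{i^\ast}}^{t_{i^\ast+1}}(\mu(X_s)-\mu(\widetilde X_s))\,ds$, which is bounded in absolute value by $2\|\mu\|_\infty\Delta$. After $t_{i^\ast+1}$ the driving noises coincide, and the bi-Lipschitz transformation $G_\mu$ together with Gronwall for the transformed Lipschitz SDE yields $\|X_1-\widetilde X_1\|_{L^p}\le C\Delta\le C/n$. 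Since $n^{-(1+\alpha)/2}\gg n^{-1}$ for every $\alpha<1$, the asserted ``$L^p$ lower bound of order $\Delta^{(1+\alpha)/2}$ for the interval integral'' is simply impossible; the true per-interval signal is of order $\Delta^{1+\alpha/2}$ in $L^2$ (cf.\ Lemmas~\ref{lemf5NEW} and~\ref{lemf6NEW}). The paper recovers the missing factor $n^{1/2}$ by replacing the Brownian bridges with independent copies on \emph{every} subinterval simultaneously and proving an iterative inequality (Lemma~\ref{lemf3NEW}) that makes $\EE[|Y_1-\widetilde Y^\pi_1|^2]$ dominate the \emph{sum} $\sum_i(t_i-t_{i-1})^{2+\alpha}\asymp n\cdot n^{-(2+\alpha)}=n^{-(1+\alpha)}$.

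Your drift has a separate problem. With disjoint supports the frequency-$2^{k^\ast}$ block lives on an interval of width $\sim 2^{-k^\ast}\sim\sqrt\Delta$ around the fixed centre $x_{k^\ast}$; since $\mu$ must be fixed before the adversary chooses the grid, you cannot ``place $x_{k^\ast}$ near the typical value of $X_{t_{i^\ast}}$'', and the probability that $X_{t_{i^\ast}}$ lands in that window is only $O(\sqrt\Delta)$. The paper instead uses the periodic series $\sum_j j^{-\beta}2^{-\alpha j}\sin(2^jx)$ restricted to $[-2\pi,4\pi]$, so that every frequency is active wherever $X$ sits. The extra damping $j^{-\beta}$ with $\beta>1/\min(p,2)$ is precisely what buys $W^{\alpha,q}$-membership for $q\ge\min(p,2)$ (Lemma~\ref{lemSob1}); this $\beta$ costs a factor $(\ln n)^{-\beta}$ in the per-interval $L^2$ estimate and is then pushed through the reverse-H\"older interpolation of Lemma~\ref{auxNEW} to produce $\gamma_p=2/\min(p,2)^2$. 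The logarithmic exponent does not come from any Gaussian-tail truncation.
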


The proofs of Theorem \ref{thm2a} and Theorem \ref{thm3} are constructive. For
every
$\alpha \in (0,1)$,
a possible choice of 
the  drift coefficient $\mu$
in Theorem  \ref{thm2a} is given by the 
Weierstrass function 
\begin{equation}\label{muAlpha}
	\mu_\alpha(x) = \sum_{j = 1}^\infty 2^{-\alpha j} \sin(2^j x), \quad x \in \R,
\end{equation}
which is known to be bounded and $\alpha$-Hölder continuous, but not $(\alpha+\eps)$-Hölder continuous for any 
$\eps \in (0,\infty)$.
For every  $\alpha \in (0,1)$ and every $p\in[1, \infty)$,
a possible choice of
the  drift coefficient $\mu$ in Theorem  \ref{thm3} is given by the Weierstrass-type function 
\begin{equation}\label{muSob}
	\mu_{\alpha,\beta}(x) = 1_{[-2\pi, 4\pi]}(x) \cdot \sum_{j = 1}^\infty {j^{-\beta}} 2^{-\alpha j}  \sin(2^j x), \quad x \in \R, 
\end{equation}
with $\beta \in (\tfrac{1}{\min(p,2)}, \infty)$.

The rest of the  article is organised as follows. 
In Section \ref{S2}, we introduce some notation.
In Section~\ref{Trans}, we provide the construction and properties of a bi-Lipschitz transformation used to transform the SDE~\eqref{sde0} into an SDE with zero drift 
coefficient
and bounded Lipschitz continuous diffusion coefficient and we prove $L^p$-error estimates for a 
Milstein-type
approximation of the solution of the transformed SDE. Section~\ref{prelim} contains preliminary estimates that are used for both the proof of Theorem~\ref{thm2a} and the proof of Theorem~\ref{thm3}. 
The proofs of Theorem~\ref{thm2a} and Theorem~\ref{thm3} are then  carried out in Section \ref{proofthm2} and Section~\ref{proofthm3}, respectively. In the Appendix we provide an
auxiliary result on Brownian bridge processes as well as a technical moment estimate and we prove  properties of the Weierstrass-type function $\mu_{\alpha,\beta}$.

\section{Notation}\label{S2}

For a set $A\subset\R$ and a function $f\colon A\to\C$ we put $\|f\|_\infty = \sup_{x\in A} |f(x)|$. We use $\bi$ to denote the imaginary unit in $\C$. 
For $f\colon \R\to\R$
 with $f_{|[0,2\pi]} \in 
L^2([0,2\pi])$
and $j\in\Z$ we use 
\[
\fh_j = \frac{1}{\sqrt{2\pi}}\int_0^{2\pi} f(x) \exp(-\bi j x)\, dx,\quad j\in \Z,
\]
to denote the
$j$-th
 Fourier coefficient of $f$.

\section{The transformation}\label{Trans}
In this section we provide  the construction and properties of the bi-Lipschitz transformation used to transform the SDE~\eqref{sde0}
into an SDE with zero drift coefficient.
This transformation is a well-known tool to remove the drift coefficient of an SDE, see e.g.~\cite{NS19}.

Let $\mu\colon \R\to \R$ be locally integrable and define 
\begin{equation}\label{trans}
	G_\mu\colon \R\to\R, \, \, x\mapsto \int_0^x e^{-2\int_0^y \mu(z)\, dz}\, dy.
\end{equation}

The following lemma is a 
slight
 generalization of~\cite[Lemma 1]{EMGY24}.

\begin{lemma}\label{lem1NEW}
	Let $\mu\colon \R \to \R$ be  locally integrable  such that $\sup_{y \in \R} |\int_0^y \mu(z) \, dz| < \infty$. Then $G_\mu$ has the following properties.\\[-.3cm]
		\begin{itemize}
			\item [(i)] 
			$G_\mu$ is continuously differentiable
			and there exist $c_1,c_2\in (0,\infty)$ such that $c_1 \le G_\mu'\le c_2$.\\[-.3cm]
			\item[(ii)] $G_\mu$ is a bijection,
				 $G_\mu^{-1}$ is continuously differentiable
			and $c_2^{-1} \le (G_\mu^{-1})'\le c_1^{-1}$.\\[-.3cm]
		\end{itemize}
		If, additionally, $\mu$ is bounded then
		\begin{itemize}
			\item[(iii)]
	the functions $G_\mu'$, $G_\mu'\circ G_\mu^{-1}$ and  $(G_\mu^{-1})'$ are Lipschitz continuous with weak derivatives $-2\mu G_\mu'$, $-2\mu\circ G_\mu^{-1}$ and $2(\mu/(G_\mu')^2) \circ  G_\mu^{-1}$, respectively.
		\end{itemize}
\end{lemma}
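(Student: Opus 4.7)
The plan is to treat the three items in order, relying throughout on the representation $G_\mu'(x) = \exp(-2 F(x))$ with $F(x) = \int_0^x \mu(z)\,dz$.

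\textbf{Part (i).} Since $\mu$ is locally integrable, $F$ is absolutely continuous and hence continuous, so $x\mapsto \exp(-2F(x))$ is continuous. By the fundamental theorem of calculus $G_\mu$ is continuously differentiable with $G_\mu'(x) = \exp(-2F(x))$. The assumption $M := \sup_{y\in\R} |F(y)| < \infty$ gives the two-sided bound $c_1 := e^{-2M} \le G_\mu' \le e^{2M} =: c_2$.

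\textbf{Part (ii).} From $G_\mu' \ge c_1 > 0$ it follows that $G_\mu$ is strictly increasing and globally Lipschitz with $|G_\mu(x) - G_\mu(y)| \ge c_1 |x-y|$, so $G_\mu(x) \to \pm \infty$ as $x\to\pm\infty$ and $G_\mu\colon\R\to\R$ is a bijection. The inverse function theorem, applied pointwise using that $G_\mu' > 0$ is continuous, yields that $G_\mu^{-1}$ is continuously differentiable with $(G_\mu^{-1})'(y) = 1/G_\mu'(G_\mu^{-1}(y))$; the bounds on $(G_\mu^{-1})'$ are immediate from those on $G_\mu'$.

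\textbf{Part (iii).} Now assume additionally that $\mu$ is bounded. Then $F$ is classically Lipschitz with weak derivative $\mu$. By the chain rule for Sobolev functions applied to the $C^1$ outer map $t\mapsto e^{-2t}$ and the Lipschitz inner map $F$, the composition $G_\mu' = \exp(-2F)$ is Lipschitz with weak derivative $-2\mu \cdot G_\mu'$, which is bounded by $2\|\mu\|_\infty c_2$. The same chain rule, applied this time to the Lipschitz map $G_\mu'$ and the Lipschitz map $G_\mu^{-1}$, yields that $G_\mu'\circ G_\mu^{-1}$ is Lipschitz, and its weak derivative is a.e.\ equal to
\[
(-2\mu\cdot G_\mu')\circ G_\mu^{-1}\cdot (G_\mu^{-1})' = -2(\mu\circ G_\mu^{-1})\cdot (G_\mu'\circ G_\mu^{-1})\cdot \frac{1}{G_\mu'\circ G_\mu^{-1}} = -2\mu\circ G_\mu^{-1}.
\]
Finally, $(G_\mu^{-1})' = 1/(G_\mu'\circ G_\mu^{-1})$ is Lipschitz as the reciprocal of the Lipschitz function $G_\mu'\circ G_\mu^{-1}$, which is bounded away from zero by $c_1$; using the chain/quotient rule for weak derivatives its weak derivative equals
\[
-\frac{-2\mu\circ G_\mu^{-1}}{(G_\mu'\circ G_\mu^{-1})^2} = 2\,\bigl(\mu/(G_\mu')^2\bigr)\circ G_\mu^{-1}.
\]

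The only subtle point is the justification of the chain rule in Part (iii): because $\mu$ is merely bounded and measurable, we cannot invoke the classical chain rule and must rely on the Sobolev/absolutely continuous version (e.g.\ the Ambrosio--Dal Maso or Marcus--Mizel chain rule), together with the bi-Lipschitz property of $G_\mu^{-1}$ established in (ii) to ensure that null sets pull back to null sets. Once this is in place, all three weak-derivative identities follow by direct computation as above.
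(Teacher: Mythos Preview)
Your proof is correct and follows essentially the same route as the paper's. The only cosmetic difference is in part~(iii): where you invoke a general Sobolev chain rule (Ambrosio--Dal Maso/Marcus--Mizel) together with the bi-Lipschitz property of $G_\mu^{-1}$, the paper carries out that step by hand, fixing a full-measure set $A$ on which $G_\mu'$ is classically differentiable, computing the derivatives of the compositions there, and verifying via $\lambda(G_\mu(A^c))=\int_{A^c}G_\mu'\,d\lambda=0$ that $G_\mu(A)$ has full measure. This is exactly the null-set preservation you allude to, so the two arguments are the same in substance.
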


\begin{proof}
 By the assumptions on $\mu$, the mapping 
\[
T\colon \R\to\R,\,\, y\mapsto  \int_0^y \mu(z)\, dz 
\]
is continuous and bounded. As a consequence, $G_\mu$ is continuously differentiable with 
\[
G_\mu'(x) = e^{-2T(x)},\,\, x\in \R.
\] 
Moreover, for every $x\in \R$,
\[
e^{-2\|T\|_{\infty}} \le e^{-2T(x)} \le 	e^{2\|T\|_\infty}, 
\]
which completes the proof of part (i). Part (ii) is an immediate consequence of part (i).

Next, assume, additionally, that $\mu$ is bounded. Then $T$ is Lipschitz continuous and, by the fundamental theorem of calculus, $T$ is differentiable Lebesgue-almost everywhere with weak derivative $\mu$. Since the mapping $S\colon\R\to \R$, $y\mapsto e^{-2y}$ is continuously differentiable and $T$ is bounded, we obtain that $G_\mu'=S\circ T$ is Lipschitz continuous with weak derivative $(S'\circ T) \cdot \mu = -2G_\mu'\cdot \mu$. This proves the statement on $G_\mu'$ in part (iii). 

By (ii) and the Lipschitz continuity of $G_\mu'$, we obtain the Lipschitz continuity of $G_\mu'\circ G_\mu^{-1}$. Furthermore, there exists a Borel set $A\subset \R$ such that $\lambda(A^c) =0$ and for every $x\in A$, the function $G'_\mu$ is differentiable in $x$ with derivative
$G_\mu''(x) = -2\mu(x)\cdot G_\mu'(x) $. We conclude that for every $x\in G_\mu(A)$, the function $G_\mu'\circ G_\mu^{-1}$ is differentiable in $x$ with derivative 
\[
(G_\mu'\circ G_\mu^{-1})'(x)=G_\mu''(G_\mu^{-1}(x))\cdot (G_\mu^{-1})'(x) = \frac{-2\mu(G_\mu^{-1}(x))G_\mu'(G_\mu^{-1}(x)) }{G_\mu'(G_\mu^{-1}(x))} = -2 \mu(G_\mu^{-1}(x)).
\]
By (i) and (ii) we obtain that $\lambda((G_\mu(A))^c)= \lambda(G_\mu(A^c)) = \int_{A^c} G_\mu'(x)\lambda (dx) =0$, which finishes the proof of the statement on $G_\mu'\circ G_\mu^{-1}$ in part (iii). 

By (i) we obtain that for all $x,y\in \R$,
\begin{align*}
	|(G_\mu^{-1})'(x) - (G_\mu^{-1})'(y)| &
	= \biggl|\frac{G_\mu'(G_\mu^{-1}(y)) - G_\mu'(G_\mu^{-1}(x))}{G_\mu'(G_\mu^{-1}(x))G_\mu'(G_\mu^{-1}(y))}\biggr| 
	\le c_1^{-2} 
	|G_\mu'\circ G_\mu^{-1}(y) - G_\mu'\circ G_\mu^{-1}(x)|,
\end{align*}
which jointly with the Lipschitz continuity of $G_\mu'\circ G_\mu^{-1}$ yields the Lipschitz continuity of $(G_\mu^{-1})'$. Finally,  for every $x\in G_\mu(A)$, the function
 $(G_\mu^{-1})' = 1/(G_\mu'\circ G_\mu^{-1})$ 
 is differentiable in $x$ with derivative 
\[
(G_\mu^{-1})''(x) = -\frac{ (G_\mu'\circ G_\mu^{-1})'(x)}{(G_\mu'\circ G_\mu^{-1}(x))^2}  = \frac{2\mu}{(G_\mu')^2}(G_\mu^{-1}(x)),
\]
which completes the proof of part (iii) and hereby finishes the proof of the lemma.
\end{proof}

Applying $G_\mu$ to the solution $X$ of the SDE~\eqref{sde0} yields the solution $Y$ of an SDE with zero drift coefficient  and  Lipschitz continuous diffusion coefficient. The following result is a slight generalization of \cite[Lemma 2]{EMGY24}.

\begin{lemma}\label{lem2NEW}
	Let $\mu\colon\R \to \R$ be  measurable and bounded with $\sup_{y \in \R} |\int_0^y \mu(z) \, dz| < \infty$.
	Then 
	the SDE~\eqref{sde0} has a unique strong solution $X$ and the
	stochastic
	process                         
	\[
	Y=\bigl(Y_t = G_\mu(X_t)\bigr)_{t\in[0,1]}
	\]
	is 
	the  
	unique strong solution of the SDE 
	\begin{equation}\label{sde1NEW}
		\begin{aligned}
			dY_t & = b_\mu(Y_t) \,   dW_t, \quad t\in [0,1],\\
			Y_0 & = G_\mu(x_0),
		\end{aligned}
	\end{equation}
	where $b_\mu = G_\mu'\circ G_\mu^{-1}$ is Lipschitz continuous with weak derivative $b_\mu' = -2\mu\circ G_\mu^{-1}$.
\end{lemma}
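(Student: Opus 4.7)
The plan is twofold: first derive the SDE satisfied by $Y = G_\mu(X)$ via a generalized It\^o formula, and then read off the regularity of $b_\mu$ from Lemma \ref{lem1NEW}. Existence and uniqueness of a strong solution $X$ to \eqref{sde0} follow from the result of Veretennikov cited as \cite{V80} in the introduction, since $\mu$ is bounded and measurable.

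For the transformation step, I would invoke Lemma \ref{lem1NEW}(iii), which tells me that $G_\mu \in C^1(\R)$ with $G_\mu'$ Lipschitz and weak second derivative $G_\mu'' = -2\mu\, G_\mu' \in L^\infty(\R)$. The key computation is to apply the It\^o--Tanaka--Meyer formula (valid for functions that are $C^1$ with absolutely continuous derivative; see e.g.\ Revuz--Yor, Chapter~VI) to the continuous semimartingale $X$:
\begin{equation*}
	G_\mu(X_t) = G_\mu(x_0) + \int_0^t G_\mu'(X_s)\, dX_s + \tfrac{1}{2} \int_0^t G_\mu''(X_s)\, d\langle X\rangle_s.
\end{equation*}
Substituting $dX_s = \mu(X_s)\, ds + dW_s$, $\langle X\rangle_s = s$, and the identity $G_\mu'' = -2\mu\, G_\mu'$ yields the cancellation of the two drift contributions, leaving
\begin{equation*}
	G_\mu(X_t) = G_\mu(x_0) + \int_0^t G_\mu'(X_s)\, dW_s = G_\mu(x_0) + \int_0^t b_\mu(Y_s)\, dW_s,
\end{equation*}
which is exactly \eqref{sde1NEW}. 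The Lipschitz continuity of $b_\mu = G_\mu' \circ G_\mu^{-1}$ and the identification of its weak derivative as $-2\mu \circ G_\mu^{-1}$ are contained in Lemma \ref{lem1NEW}(iii), and uniqueness of the strong solution to \eqref{sde1NEW} follows from the classical well-posedness theory for SDEs with Lipschitz diffusion coefficient and zero drift.

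The main technical subtlety is the justification of the It\^o formula in the regularity class $G_\mu \in W^{2,\infty}_{\mathrm{loc}}(\R)$ rather than $C^2$: since $G_\mu''$ is only specified almost everywhere, one needs to know that $G_\mu''(X_s)$ is unambiguously defined. This is ensured by the occupation time formula together with the absolute continuity of the law of $X_s$ for each $s > 0$, a consequence of Girsanov's theorem applied against the driftless Brownian reference measure (which is admissible because $\mu$ is bounded). If one prefers to sidestep the generalized It\^o formula entirely, a mollification argument works equally well: regularise $\mu$ to obtain smooth bounded $\mu_n$ with $\sup_n \sup_y |\int_0^y \mu_n(z)\, dz| < \infty$, apply the classical It\^o formula to the smooth $G_{\mu_n}$ along the associated solutions $X^{(n)}$, and pass to the limit using the uniform two-sided bounds on $G_{\mu_n}'$ from Lemma \ref{lem1NEW}(i) and the pointwise convergence of $\mu_n$ to $\mu$. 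Either route reduces the lemma to the bookkeeping already carried out in Lemma \ref{lem1NEW}.
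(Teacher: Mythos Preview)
Your proof is correct and follows essentially the same route as the paper: existence and uniqueness of $X$ from \cite{V80}, application of a generalized It\^o formula (the paper cites \cite[Problem 3.7.3]{ks91} rather than Revuz--Yor) using the regularity of $G_\mu$ from Lemma~\ref{lem1NEW}(i),(iii), and uniqueness for \eqref{sde1NEW} from the Lipschitz continuity of $b_\mu$. Your explicit computation and the discussion of the technical subtlety (and the alternative mollification argument) go beyond what the paper writes out, but the core argument is the same.
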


The proof of Lemma \ref{lem2NEW} is almost identical to the proof of \cite[Lemma 2]{EMGY24} using Lemma \ref{lem1NEW} in place of \cite[Lemma 1]{EMGY24}.
For convenience of the reader we provide a proof of it here.
\begin{proof}
	By assumption, $\mu$ is measurable and bounded, which implies existence and uniqueness of a strong solution of~\eqref{sde0}, see
	~\cite{V80}. By Lemma~\ref{lem1NEW}(i),(iii) we may apply a generalized It\^{o} formula, see e.g.~\cite[Problem 3.7.3]{ks91}, to conclude that the 
	stochastic
	process $(G_\mu(X_t))_{t\in[0,1]}$ is a strong solution of the SDE~\eqref{sde1NEW}. The properties of $b_\mu$ are stated in Lemma~\ref{lem1NEW} (iii). As a consequence of the Lipschitz continuity of $b_\mu$, the strong solution of the SDE~\eqref{sde1NEW} is unique.
\end{proof}

For $n\in\N$  let  $\mil_n=(\mil_{n,t})_{t\in [0,1]}$ 
denote the 
 time-continuous Milstein-type scheme with step-size $1/n$ associated to the SDE~\eqref{sde1NEW}  given by
$\mil_{n, 0} =G_\mu(x_0)$ and 
\begin{equation}\label{mil}
	\mil_{n, t} = \mil_{n, \ell/n} + b_{\mu}(\mil_{n, \ell/n})(W_{t} - W_{\ell/n}) + \frac{1}{2} b_{\mu} b_{\mu}'(\mil_{n, \ell/n})((W_{t} - W_{\ell/n})^2 - (t-\ell/n))
\end{equation}

for all $t\in (\ell/n,(\ell+1)/n]$ and 	every $\ell \in \{0,1,\dots,n-1\}$.
Next, we provide an $L^p$-error estimate for $\mil_n$.


The following result is a slight generalization of Theorem 7.5(a) in \cite{Pages2018}.

\begin{prop}\label{prop1NEW}
	Let $\alpha \in (0,1)$
	and let $\mu  \in C^\alpha(\R)$ be bounded with $\sup_{y \in \R} |\int_0^y \mu(z) \, dz| < \infty$.
	Then for all $p \in (0, \infty)$ 
	there exists
	$c \in (0, \infty)$ such that for all $n \in \N$,
	\begin{equation}\label{milest}
		\EE\bigl[\,\|Y - \mil_n\|_\infty^p\bigr]^{1/p} \leq \frac{c}{ n^{(1 + \alpha) / 2}}.
	\end{equation}
\end{prop}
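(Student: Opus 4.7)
The plan is to follow the classical Milstein error analysis (in the spirit of Pag\`es, Theorem~7.5(a)) adapted to the reduced regularity $b_\mu \in C^{1,\alpha}$. By Lemma~\ref{lem2NEW} together with Lemma~\ref{lem1NEW}, $b_\mu$ is bounded and Lipschitz continuous with $\alpha$-H\"older continuous derivative $b_\mu' = -2\mu \circ G_\mu^{-1}$ (the composition of the $\alpha$-H\"older $\mu$ with the Lipschitz $G_\mu^{-1}$); in particular $b_\mu b_\mu'$ is bounded and $\alpha$-H\"older continuous. Standard BDG estimates based on the boundedness of $b_\mu$ and $b_\mu b_\mu'$ first give uniform-in-$n$ a priori bounds $\sup_{n \in \N}(\EE[\|\mil_n\|_\infty^p] + \EE[\|Y\|_\infty^p]) < \infty$ for all $p \ge 1$ and the $L^p$-H\"older-$1/2$ time modulus $\EE[|Z_t - Z_s|^p]^{1/p} \le c|t-s|^{1/2}$ for $Z \in \{Y, \mil_n\}$.

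Next I would write the Milstein scheme in continuous-time integral form
\[
\mil_{n,t} = \mil_{n,0} + \int_0^t b_\mu(\mil_{n,\underline s})\, dW_s + \int_0^t b_\mu b_\mu'(\mil_{n,\underline s})(W_s - W_{\underline s})\, dW_s
\]
(with $\underline s = \lfloor n s \rfloor/n$) and subtract from $Y_t = Y_0 + \int_0^t b_\mu(Y_s)\, dW_s$. Combining the first-order Taylor expansion $b_\mu(y) - b_\mu(x) = b_\mu'(x)(y-x) + \rho(x,y)$ with $|\rho(x,y)| \le c|y-x|^{1+\alpha}$ (valid since $b_\mu \in C^{1,\alpha}$) and the It\^o decomposition $Y_s - Y_{\underline s} = b_\mu(Y_{\underline s})(W_s - W_{\underline s}) + \int_{\underline s}^s [b_\mu(Y_u) - b_\mu(Y_{\underline s})]\, dW_u$, the integrand of the error $E_t := Y_t - \mil_{n,t}$ splits into four contributions: \emph{(a)} the Lipschitz term $b_\mu(Y_{\underline s}) - b_\mu(\mil_{n,\underline s})$, \emph{(b)} the H\"older term $[b_\mu b_\mu'(Y_{\underline s}) - b_\mu b_\mu'(\mil_{n,\underline s})](W_s - W_{\underline s})$, \emph{(c)} the Taylor remainder $\rho(Y_{\underline s}, Y_s)$, and \emph{(d)} the It\^o remainder $b_\mu'(Y_{\underline s})\int_{\underline s}^s [b_\mu(Y_u) - b_\mu(Y_{\underline s})]\, dW_u$. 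Applying BDG and Doob's inequality to $E_t$, terms (c) and (d) contribute $O(n^{-(1+\alpha)/2})$ and $O(n^{-1})$, respectively, in $L^p$, thanks to the H\"older-$1/2$ regularity of $Y$, while (a) produces the standard Gronwall self-reference.

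The main obstacle is term (b): since $b_\mu b_\mu'$ is only $\alpha$-H\"older (not Lipschitz), BDG yields integrands of the form $|E_{\underline s}|^{2\alpha}(W_s - W_{\underline s})^2$. I would exploit the $\F_{\underline s}$-measurability of $E_{\underline s}$ and the independence of $W_s - W_{\underline s}$ from $\F_{\underline s}$, together with Jensen's inequality $\EE[|E_{\underline s}|^{2\alpha}] \le \EE[|E_{\underline s}|^2]^\alpha$, to bound this contribution by $c n^{-1}\sup_{s \le T}\EE[|E_s|^p]^{2\alpha/p}$. Setting $\phi(T) := \sup_{t \le T}\EE[|E_t|^p]^{2/p}$, one arrives at the self-consistent inequality
\[
\phi(T) \le c\bigl(n^{-(1+\alpha)} + n^{-1}\phi(T)^\alpha\bigr) + c\int_0^T \phi(s)\, ds,
\]
which I would close iteratively: a first Gronwall application using the trivial a priori bound $\phi \le c$ yields $\phi \le cn^{-1}$, and reinserting this into the Hölder term produces $\phi \le cn^{-(1+\alpha)}$, whence $\EE[\|Y - \mil_n\|_\infty^p]^{1/p} \le cn^{-(1+\alpha)/2}$.
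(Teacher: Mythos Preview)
Your argument is correct, but it takes a different route from the paper. The paper does \emph{not} Taylor-expand $b_\mu(Y_s)$ around $Y_{\underline s}$; instead it adds and subtracts $b_\mu(\mil_{n,s})$ and then expands $b_\mu(\mil_{n,s})$ around $\mil_{n,\underline s}$. This produces the decomposition
\[
Y_t-\mil_{n,t}=\int_0^t\!\bigl(b_\mu(Y_s)-b_\mu(\mil_{n,s})\bigr)dW_s
+\int_0^t\!\bigl(b_\mu(\mil_{n,s})-b_\mu(\mil_{n,\underline s})-b_\mu'(\mil_{n,\underline s})(\mil_{n,s}-\mil_{n,\underline s})\bigr)dW_s
+\int_0^t b_\mu(b_\mu')^2(\mil_{n,\underline s})\!\int_{\underline s}^s(W_u-W_{\underline s})\,dW_u\,dW_s.
\]
The first term is the Lipschitz/Gronwall term (at the continuous time $s$, not at the grid point $\underline s$), the second is the $C^{1,\alpha}$ Taylor remainder evaluated on the \emph{explicit} increment $\mil_{n,s}-\mil_{n,\underline s}$ (hence bounded by $c|\mil_{n,s}-\mil_{n,\underline s}|^{1+\alpha}$ and immediately of order $n^{-(1+\alpha)/2}$), and the third is an explicit iterated stochastic integral of order $n^{-1}$. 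A single application of Gronwall then closes the estimate.

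The key difference is that by centering the Taylor expansion at $\mil_{n,\underline s}$ rather than at $Y_{\underline s}$, the paper completely avoids your term (b). In your decomposition this term forces a comparison of $b_\mu b_\mu'$ at $Y_{\underline s}$ and $\mil_{n,\underline s}$, which is only $\alpha$-H\"older in the error and hence yields the sublinear contribution $n^{-1}\psi(t)^\alpha$ and the two-step bootstrap. Your bootstrap is valid (the a priori bound $\psi\le c$ feeds $\psi\le cn^{-1}$, which in turn feeds $\psi\le cn^{-(1+\alpha)}$), so your proof goes through; the paper's choice of decomposition simply buys a cleaner argument with a single linear Gronwall and no iteration.
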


\begin{proof}
	We proceed similar to the proof of Theorem 7.5(a) in \cite{Pages2018}.
	Without loss of generality, we may assume that $p\ge 2$.
	For  $t\in [0,1]$ and $n\in \N$  put $\utn = \lfloor nt \rfloor/n$.
Then for all $t\in [0,1]$ and all $n\in \N$ we have
\begin{equation}\label{mil1}
	\begin{aligned}
		Y_t - \mil_{n,t} & = \int_0^t \bigl(b_\mu(Y_s) - b_\mu(\mil_{n,\usn}) - b_\mu b_\mu'(\mil_{n,\usn})(W_s-W_{\usn} )\bigr) \, dW_s\\
		& = \int_0^t (b_\mu(Y_s) -b_\mu(\mil_{n,s})) \, dW_s \\
		& \qquad  \qquad + \int_0^t \bigl(b_\mu(\mil_{n,s})- b_\mu(\mil_{n,\usn}) 
		- b_\mu'(\mil_{n,\usn})(\mil_{n, s}-\mil_{n,\usn})  \bigr) \, dW_s \\
		& \qquad\qquad + \int_0^t \Bigl( b_\mu(b_\mu')^2(\mil_{n,\usn} ) \int_{\usn}^s (W_u-W_{\usn} ) \, dW_u \Bigr) \, dW_s. 
	\end{aligned}
\end{equation}

Using  the Burkholder-Davis-Gundy  inequality, the Lipschitz continuity of $b_\mu$, see Lemma \ref{lem2NEW}, and the H\"older inequality we obtain that there exist $c_1,c_2\in (0,\infty)$ such that for  all $t\in [0,1]$ and all $n\in \N$, 
\begin{equation}\label{mil2}
	\begin{aligned}
	\EE \biggl[ \,\sup_{s\in [0,t] }  \biggl| \int_0^s (b_\mu(Y_s) -b_\mu(\mil_{n,s})) \, dW_s  \biggr|^p \biggr] 
	&  \le c_1 \EE \biggl[ \, \biggl| \int_0^t |b_\mu(Y_s) -b_\mu(\mil_{n,s})|^2 \, ds  \biggr|^{p/2} \biggr] \\
	&  \le c_2  \EE \biggl[ \,  \int_0^t |Y_s -\mil_{n,s}|^p \, ds  \biggr] \\
	& \le c_2   \int_0^t \EE \biggl[\sup_{u\in [0,s] }  |Y_u -\mil_{n,u}|^p\biggr ] \, ds.
	\end{aligned}
\end{equation}	

By Lemma \ref{lem2NEW}, $b_\mu$ is absolutely continuous with weak derivative 
$b_\mu'=-2\mu\circ G_\mu^{-1}$.
Using the latter fact as well as $\mu \in C^\alpha(\R)$ and the Lipschitz continuity of $G_\mu^{-1}$, see Lemma \ref{lem1NEW}(ii), we obtain that there 
exist $c_1, c_2\in (0,\infty)$ 
such that for all $x,y\in \R$ with $x\le y$,
\begin{align*}
		|	b_\mu (y)- b_\mu(x) - b_\mu'(x)(y-x)|& = \biggl | \int_x^y (b_\mu'(u)-b_\mu'(x))\, du \biggr| 
\leq c_1 \int_x^y |G_\mu^{-1}(u)-G_\mu^{-1}(x)|^\alpha\, du\\
&\le c_2 \int_x^y (u-x)^\alpha \, du \le c_2 (y-x)^{1+\alpha}.
\end{align*}	
Using the latter estimate, the Burkholder-Davis-Gundy inequality and the H\"older inequality we conclude that there exist $c_1,c_2\in (0,\infty)$ such that for
all  $n\in \N$, 
\begin{equation}\label{mil4}
	\begin{aligned}
		&	\EE \biggl[ \,
		\sup_{s\in [0,1]}
		  \biggl| \int_0^s \bigl(b_\mu(\mil_{n,s})- b_\mu(\mil_{n,\usn}) -  b_\mu'(\mil_{n,\usn})(\mil_{n,s}-\mil_{n,\usn})  \bigr) \, dW_s  \biggr|^p \biggr] \\
		& \qquad \qquad \le c_1 \EE \biggl[ \, \biggl| 
		 \int_0^1  
		\bigl|b_\mu(\mil_{n,s})- b_\mu(\mil_{n,\usn}) -  b_\mu'(\mil_{n,\usn})(\mil_{n,s}-\mil_{n,\usn})  \bigr|^2 \, ds  \biggr|^{p/2} \biggr] \\
		& \qquad \qquad \le c_2  
		 \int_0^1  \EE \bigl[ 
		|\mil_{n,s} -\mil_{n,\usn}|^{p(1+\alpha)}\bigr] \, ds  . 
	\end{aligned}
\end{equation}	
Since $\mu$ is bounded and $G_\mu'$ is bounded, see Lemma \ref{lem1NEW}(i), we get that $b_\mu$ and $b_\mu'$ are bounded as well. Hence there exist $c_1,c_2\in (0,\infty)$ such that for  all $s\in [0,1]$ and all $n\in \N$, 
\begin{equation}\label{mil5}
	\begin{aligned}
		&	\EE \bigl[ |\mil_{n,s} -\mil_{n,\usn}|^{p(1+\alpha)}   \bigr] \\
		& \qquad = \EE \bigl[ \bigl| b_{\mu}(\mil_{n, \usn})(W_{s} - W_{\usn}) + \frac{1}{2} b_{\mu} b_{\mu}'(\mil_{n, \usn})((W_{s} - W_{\usn})^2 - (s-\usn)) \bigr|^{p(1+\alpha)}   \bigr] \\
		& \qquad \le c_1 \EE \bigl[ \bigl( |W_{s} - W_{\usn}|+(W_{s} - W_{\usn})^2 + 1/n \bigr)^{p(1+\alpha)}  \bigr] \\
			& \qquad \le \frac{c_2}{ n^{p(1+\alpha)/2}}.
	\end{aligned}
\end{equation}	
Combining \eqref{mil4} and \eqref{mil5}, we conclude that there  exists $c\in (0,\infty)$ such that for  all  $n\in \N$, 
\begin{equation}\label{mil6}
	\EE \biggl[ \,
	\sup_{s\in [0,1]}
	 \biggl| \int_0^s \bigl(b_\mu(\mil_{n,s})- b_\mu(\mil_{n,\usn}) -  b_\mu'(\mil_{n,\usn})(\mil_{n,s}-\mil_{n,\usn})  \bigr) \, dW_s  \biggr|^p \biggr] \le \frac{c}{ 	n^{p(1+\alpha)/2}}.
\end{equation}

Finally, by the boundedness of $b_\mu$ and $b_\mu'$, the Burkholder-Davis-Gundy inequality and the H\"older inequality, we obtain that there exist $c_1,c_2,c_3\in (0,\infty)$ such that for  all  $n\in \N$, 
\begin{equation}\label{mil7}
	\begin{aligned}
	&	\EE \biggl[ \,\sup_{t\in [0,1] }  \biggl|  \int_0^t \Bigl( b_\mu(b_\mu')^2(\mil_{n,\usn} ) \int_{\usn}^s (W_u-W_{\usn} ) \, dW_u \Bigr) \, dW_s  \biggr|^p \biggr] \\	
	& \qquad \le c_1 \EE \biggl[ \,  \biggl|  \int_0^1  b_\mu^2(b_\mu')^4(\mil_{n,\usn} ) \Bigl(\int_{\usn}^s (W_u-W_{\usn} ) \, dW_u \Bigr)^2 \, ds  \biggr|^{p/2} \biggr] \\	
	& \qquad \le c_2  \int_0^1  \EE \bigl[| (W_s-W_{\usn} )^2 - (s-\usn)|^p\bigr] \, ds \\	
	& \qquad \le \frac{c_3}{ n^{p}}.
	\end{aligned}
\end{equation}	
Combining \eqref{mil1} with \eqref{mil2}, \eqref{mil6} and \eqref{mil7} we obtain that  there  exists $c\in (0,\infty)$ such that for all $t\in [0,1]$ and all $n\in \N$,
 \begin{equation}\label{mil8}
\EE \biggl[\sup_{s\in [0,t] }  |Y_s -\mil_{n,s}|^p\biggr ]  \le   c \int_0^t \EE \biggl[\sup_{u\in [0,s] }  |Y_u -\mil_{n,u}|^p\biggr ] \, ds + \frac{c }{n^{p(1+\alpha)/2}}.
\end{equation}
Since $b_\mu$ and $b_\mu'$ are bounded, it is straightforward to see that 	$	\EE\bigl[\,\|Y\|_\infty^p\bigr] + \EE\bigl[\,\| \mil_n\|_\infty^p\bigr]< \infty$. Hence, \eqref{milest} is a consequence of \eqref{mil8} and the Gronwall inequality.
\end{proof}

\section{Preliminary estimates}\label{prelim}

In the following let 
$(\Omega,\mathcal A,\PP)$
be a probability space, let $W\colon [0,1]\times \Omega\to \R$ be a standard Brownian motion on $[0,1]$, 
let $x_0\in\R$,
let $\mu\colon \R\to\R$ be measurable and bounded  and let $X$ denote the strong solution of the corresponding SDE~\eqref{sde0}. Moreover, for any continuous stochastic process $V\colon [0,1]\times \Omega\to \R$ we use 
\[
I(V) = \Bigl(I_t(V) = \int_0^t V_s\, ds\Bigr)_{ t\in [0,1] }
\]
to denote the process obtained by pathwise integrating $V$.

For every $p\in [1,\infty)$ and every discretization $\tau =\{t_1,\dots,t_n\}$ of $[0,1]$ with $0 < t_1 <\dots <t_n = 1$  we use 
\[
e_p(\tau) = \inf_{        g \colon \R^{2n} \to \R \text{ measurable}} \EE\bigl[  |X_1-g(W_{t_1},\dots,W_{t_n}, I_{t_1}(W),\dots,I_{t_n}(W) )|^p\bigr]^{1/p}
\]
to denote the smallest possible 
$L^p$-error 
that can be achieved for approximating  $X_1$ based on  $W_{t_1},\dots,W_{t_n}, I_{t_1}(W),\dots,I_{t_n}(W) $. To obtain a lower bound for $e_p(\tau)$ we generalize the coupling approach from~\cite{MGY23} and construct a Brownian motion $\tW^\tau\colon [0,1]\times \Omega\to\R$ such that almost surely $(W,I(W))$ coincides with $(\tW^\tau, I(\tW^\tau) )$ at the points $t_1,\dots,t_n$ and we analyse the  
$L^p$-distance of the solution  $\tX^\tau$ of the 
SDE~\eqref{sde0} with driving Brownian motion $\tW^\tau$ and $X$ at the final time, see Lemma~\ref{lemf1}.

Throughout the rest of the paper we put
\[
t_0 =0. 
\]
Define a continuous stochastic process $\Wcpi\colon [0,1]\times \Omega\to\R$ by
	\[
	\Wcpi_t= \EE[W_t | W_{t_1},\dots,W_{t_n},I_{t_1}(W),\dots,I_{t_n}(W)], \quad t\in [0,1],
	\]
	and note that, almost surely, for every $i\in\{1,\dots,n\}$, 
	\begin{equation}\label{uv1}
	\Wcpi_{t_i} = W_{t_i} \text{ and } \int_0^{t_i} \Wcpi_t\, dt = \EE\Bigl[ \int_0^{t_i} W_t\, dt  \Bigl| W_{t_1},\dots,W_{t_n},I_{t_1}(W),\dots,I_{t_n}(W)\Bigr] = I_{t_i}(W).
	\end{equation}

Put
	\[
	Z^\tau = W- \Wcpi.
	\]
		Without loss of generality, we may assume that $(\Omega,\mathcal A,\PP)$ 
		is rich enough to carry a continuous process $\widetilde Z^\tau\colon [0,1]\times \Omega\to\R$ such that $W$ and $\widetilde Z^\tau$ are independent and $Z^\tau \stackrel{d}{=}\widetilde Z^\tau$. We define a Brownian motion $\Wtpi\colon [0,1]\times \Omega\to\R$ by
		\[
		\Wtpi = \Wcpi   + \Ztpi.
		\]
		By \eqref{uv1} we have almost surely,  
	\begin{equation}\label{uv2a}
\forall \, i\in\{1,\dots,n\}\colon\,	\Wtpi_{t_i} = W_{t_i} \text{ and } I_{t_i}(\Wtpi) = I_{t_i}(W).
\end{equation}

We use 
\begin{equation}\label{extra1}
	\tX^\tau = (\tX_t^\tau )_{t\in[0,1]}
\end{equation}
to denote the strong solution of the SDE~\eqref{sde0} with driving Brownian motion $\tW^\tau$ in place of $W$
and 
\[
\Fc^{W,\tW^\tau} = \bigl(\Fc^{W,\tW^\tau}_t = \sigma(\{(W_s,\tW^\tau_s)| s\in [0,t]\})\bigr)_{t\in [0,1]}
\]
to denote the filtration generated by the process $(W,\tW^\tau)$. 

For later purposes we next show that, for every $i\in\{1,\dots,n\}$, the processes  $(W_t,\tW^\tau_t)_{t\le t_i}$ and $(W_t-W_{t_i},\tW^\tau_t-\Wtpi_{t_i})_{t\in [t_i,1]}$ are independent.
Let
$\overline W^\tau\colon [0,1]\times \Omega\to\R$ denote the piecewise linear interpolation of $W$ on $[0,1]$ at the points $t_0, \ldots, t_{n}$, i.e. 
\[
\overline W^\tau_t=\tfrac{t-t_{i-1}}{t_i-t_{i-1}}\,W_{t_i}+\tfrac{t_i-t}{t_i-t_{i-1}}\, W_{t_{i-1}}, \quad t\in [t_{i-1}, t_i],
\]
for $i\in\{1, \ldots, n\}$, and  put
\[
B^\tau=W-\overline W^\tau.
\]
It is well known that $(B^\tau_t)_{t\in [t_{i-1}, t_i]}$ is a Brownian bridge on $[t_{i-1}, t_i]$ for every $i\in\{1, \ldots, n\}$ and that the processes $(B^\tau_t)_{t\in [t_{0}, t_1]}, \ldots,(B^\tau_t)_{t\in [t_{n-1}, t_{n}]}, \overline W^\tau$ are independent. Hence, 
	\begin{equation}\label{uv01}
W_{t_1}-W_{t_0},\ldots, W_{t_n}-W_{t_{n-1}}, (B^\tau_t)_{t\in [t_{0}, t_1]}, \ldots,(B^\tau_t)_{t\in [t_{n-1}, t_{n}]}, \Ztpi \text{  are independent. }		
		\end{equation}
Moreover,
\[
\sigma\bigl(\{ W_{t_1},\dots,W_{t_n},I_{t_1}(W),\dots,I_{t_n}(W)   \}\bigr) = \sigma\Bigl(\Bigl\{ W_{t_1},\dots,W_{t_n},\int_{0}^{t_1} B^\tau_s\,ds,\dots, \int_{t_{n-1}}^{1} B^\tau_s\,ds \Bigr\}\Bigr), 
\]
and therefore, for every  $i\in\{1, \ldots, n\}$ and every $t\in[t_{i-1},t_i]$,
\begin{equation}\label{uv2}
	\begin{aligned}
\Wcpi_t	& = \EE\Bigl[ \overline W^\tau_t + B^\tau_t \Bigl| W_{t_1},\dots,W_{t_n},\int_{0}^{t_1} B^\tau_s\,ds,\dots, \int_{t_{n-1}}^{1} B^\tau_s\,ds\Bigr]\\
& = \overline W^\tau_t +  \EE\Bigl[  B^\tau_t \Bigl|  \int_{t_{i-1}}^{t_i} B^\tau_s\,ds\Bigr],
\end{aligned}
	\end{equation}
which yields that for every  $i\in\{1, \ldots, n\}$ and every $t\in[t_{i-1},t_i]$,
\begin{equation}\label{uv3}
	\begin{aligned}
	Z^\tau_t & = B^\tau_t - \EE\Bigl[  B^\tau_t \Bigl|  \int_{t_{i-1}}^{t_i} B^\tau_s\,ds\Bigr] \\
	& = B^\tau_t - \EE\Bigl( B^\tau_t  \int_{t_{i-1}}^{t_i}  B^\tau_s\,ds\Bigr)\, \Bigl( \EE \Bigl[\Bigl( \int_{t_{i-1}}^{t_i} B^\tau_s\,ds\Bigr)^2\Bigr] \Bigr)^{-1} 
	 \, \int_{t_{i-1}}^{t_i}  B^\tau_s\,ds\\
	 & =B^\tau_t- \frac{6(t_i-t)(t-t_{i-1})}{(t_i-t_{i-1})^3} \int_{t_{i-1}}^{t_i} B^\tau_s\, ds.
	\end{aligned}
\end{equation}
It follows that 
\begin{equation}\label{uv4}
	\begin{aligned}
& \overline W^\tau, \Bigl(\int_{t_0}^t B^\tau_t\, dt \Bigr)_{t\in [t_{0}, t_1]},  (Z^\tau_t)_{t\in [t_{0}, t_1]},\ldots,\Bigl(\int_{t_{n-1}}^{t_n}B^\tau_t\, dt\Bigr)_{t\in [t_{n-1}, t_{n}]},(Z^\tau_t)_{t\in [t_{n-1}, t_{n}]} \\
& \text{are independent.}
\end{aligned}
\end{equation}
Observing ~\eqref{uv2} and~\eqref{uv3} it is easy to see that, for every $i\in\{1,\dots,n\}$,
\begin{align*}
(W_t)_{t\in [0,t_i]} & = \psi_1\Bigl(W_{t_1}-W_{t_0}, \ldots, W_{t_i}-W_{t_{i-1}},(B^\tau_t)_{t\in [0, t_i]} \Bigr),\\
(\Wtpi_t)_{t\in [0, t_i]} & = \psi_2\Bigl( W_{t_1}-W_{t_0}, \ldots, W_{t_i}-W_{t_{i-1}}, (B^\tau_t)_{t\in [0, t_i]}, (\Ztpi_t)_{t\in [0,t_i]}\Bigr),\\
(W_t- W_{t_i})_{t\in[t_i,1]} & = \psi_3\Bigl( W_{t_{i+1}}-W_{t_i}, \ldots, W_{t_n}-W_{t_{n-1}}, (B^\tau_t)_{t\in [t_{i}, 1]}\Bigr),\\
(\Wtpi_t- \Wtpi_{t_i})_{t\in[t_i,1]} & = \psi_4\Bigl( W_{t_{i+1}}-W_{t_i}, \ldots, W_{t_n}-W_{t_{n-1}}, (B^\tau_t)_{t\in [t_{i}, 1]}, (\Ztpi_t)_{t\in [t_i,1]}\Bigr)
\end{align*}
with appropriate measurable mappings $\psi_1\colon\R^i\times C([0,t_i];\R)\to C([0,t_i];\R)$, $\psi_2\colon\R^i\times C([0,t_i];\R)\times C([0,t_i];\R)\to C([0,t_i];\R)$, $\psi_3\colon\R^{n-i}\times C([t_i,1];\R)\to C([t_i,1];\R)$, $\psi_4\colon\R^{n-i}\times C([t_i,1];\R)\times C([t_i,1];\R)\to C([t_i,1];\R)$. Hence, by~\eqref{uv01} and~\eqref{uv4},

\begin{equation}\label{qx2}
	\forall i\in\{1,\dots,n\}\colon \, \Fc^{W,\tW^\tau}_{t_i} \text{ and } \sigma\bigl(\{(W_t-W_{t_i},\tW^\tau_t-\tW^\tau_{t_i})\,|\, t\in[t_i,1]\}\bigr)\text{ are independent}.
\end{equation}

For all $n\in\N$ we put
\[
\mathcal T_n = \bigl\{\{t_1,\dots,t_n\}\,|\, 0<t_1< \dots <t_n=1\bigr\}
\]
and we define
\[
\mathcal T = \bigcup_{n\in\N} \mathcal T_n.
\]

\begin{lemma}\label{lemf1} Let $\mu\colon\R\to \R$ be measurable and bounded. Then, for 
	every $p\in[1, \infty)$ and
	every $\tau\in\mathcal T$,
	\[
	e_p(\tau) \geq \frac{1}{2}\,   \EE[|X_1-\widetilde X^\tau_1|^p]^{1/p}.
	\]
\end{lemma}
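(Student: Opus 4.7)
The plan is to use the standard coupling lower bound argument: the coupled process $\widetilde X^\pi$ has the same law as $X$, and by construction the observations $\widetilde W^\pi_{t_1},\ldots,\widetilde W^\pi_{t_n}$ coincide with $W_{t_1},\ldots,W_{t_n}$, so any algorithm is forced to produce the same output on both input streams. A triangle inequality in $L^p$ then converts this into the bound $\frac{1}{2}\EE[|X_1-\widetilde X^\pi_1|^p]^{1/p}$.

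More concretely, I would first fix a measurable $g\colon\R^n\to\R$ and write
\[
\EE[|X_1-\widetilde X^\pi_1|^p]^{1/p} \le \EE[|X_1-g(W_{t_1},\ldots,W_{t_n})|^p]^{1/p}+\EE[|\widetilde X^\pi_1-g(W_{t_1},\ldots,W_{t_n})|^p]^{1/p}
\]
by Minkowski's inequality. For the second summand I would use \eqref{qx1} to replace $W_{t_i}$ by $\widetilde W^\pi_{t_i}$ inside $g$ and then use the fact that $(\widetilde W^\pi,\widetilde X^\pi)$ has the same distribution as $(W,X)$ to conclude that
\[
\EE[|\widetilde X^\pi_1-g(W_{t_1},\ldots,W_{t_n})|^p] = \EE[|\widetilde X^\pi_1-g(\widetilde W^\pi_{t_1},\ldots,\widetilde W^\pi_{t_n})|^p]= \EE[|X_1-g(W_{t_1},\ldots,W_{t_n})|^p].
\]
Combining the two displays yields $\EE[|X_1-\widetilde X^\pi_1|^p]^{1/p}\le 2\,\EE[|X_1-g(W_{t_1},\ldots,W_{t_n})|^p]^{1/p}$, and taking the infimum over all measurable $g$ gives the claim.

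The only genuine step that requires justification is the distributional identity $(\widetilde W^\pi,\widetilde X^\pi)\stackrel{d}{=}(W,X)$. Since $\widetilde W^\pi=\overline W^\pi+\widetilde B^\pi$, where $\overline W^\pi$ is determined by $W$ and $\widetilde B^\pi$ is a vector of independent Brownian bridges that is independent of $W$, a routine application of Lévy's characterisation (or the standard fact that $\overline W^\pi$ together with independent Brownian bridges on the subintervals assembles a Brownian motion) shows that $\widetilde W^\pi$ is a Brownian motion. By the strong-uniqueness statement in Lemma \ref{lem2NEW} (or by Veretennikov's theorem, \cite{V80}) applied under the Brownian motion $\widetilde W^\pi$, the solution $\widetilde X^\pi$ is a measurable functional of $\widetilde W^\pi$ and the initial value $x_0$, the same functional that represents $X$ in terms of $W$. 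Hence $(\widetilde W^\pi,\widetilde X^\pi)$ and $(W,X)$ have the same joint law, which is exactly what the argument above requires. No serious obstacle is expected; the main point is simply to invoke the distributional identity cleanly.
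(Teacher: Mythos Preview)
Your argument is correct and is exactly the standard coupling lower bound that the paper invokes by citing \cite[Lemma 11]{MGY23}: Minkowski's inequality, the identity \eqref{qx1}, and the distributional equality $(\widetilde W^\pi,\widetilde X^\pi)\stackrel{d}{=}(W,X)$ coming from strong uniqueness. Nothing needs to be changed.
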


\begin{proof}
 Let $n\in\N$, $\tau\in\mathcal T_n$ and let $g \colon \R^{2n} \to \R$ be a measurable function. Since $\mu$ is measurable and bounded, 
	strong existence and pathwise uniqueness hold for the SDE \eqref{sde0}, see \cite{V80}.  Hence, for every $i\in\{1,\dots,n\}$, there exists a measurable function $\Psi_i\colon  C([0,t_i];\R)\times C([0,t_i];\R) \to \R$ such that, almost surely,
	\begin{equation}\label{uv5}
	X_{t_i} =  \Psi_i \bigl((\Wcpi_{t})_{t\in[0,t_i]}, (Z^\tau_t)_{t\in[0,t_i]}\bigr)\text{ and }	\Xtpi_{t_i} =  \Psi_i \bigl((\Wcpi_{t})_{t\in[0,t_i]}, (\Ztpi_t)_{t\in[0,t_i]}\bigr)
	\end{equation}
	see, e.g., [11, Theorem 1]. By \eqref{uv1},  \eqref{uv2} and~\eqref{uv5} we obtain, in particular, that there  exist measurable functions $\Psi\colon C([0,1];\R)\times C([0,1];\R)\to\R$ and $\varphi\colon C([0,1];\R)\to\R$ such that, almost surely,
	\[
		X_1=\Psi(\Wcpi, Z^\tau),\,\, \Xtpi_1 = \Psi(\Wcpi, \Ztpi) 
	\]
and
\[
 g(\Wtpi_{t_1}, \ldots, \Wtpi_{t_{n}},I_{t_1}(\Wtpi), \ldots, I_{t_{n}}(\Wtpi) )= g(W_{t_1}, \ldots, W_{t_{n}},I_{t_1}(W), \ldots, I_{t_{n}}(W) )=\varphi(\Wcpi).
\]

	Since $\PP^{(\Wcpi, Z^\tau)}=\PP^{\Wcpi} \times \PP^{Z^\tau} = \PP^{\Wcpi} \times \PP^{\Ztpi} =\PP^{(\Wcpi, \Ztpi)}$, see~\eqref{uv4} and \eqref{uv01}, we conclude by the triangle inequality that
	\begin{align*}
& \EE[|X_1- g(W_{t_1}, \ldots, W_{t_{n}},I_{t_1}(W), \ldots, I_{t_{n}}(W) )|^p]^{1/p} \\
& \qquad\qquad  =  \frac{1}{2}\bigl( \EE[|X_1- g(W_{t_1}, \ldots, W_{t_{n}},I_{t_1}(W), \ldots, I_{t_{n}}(W) )|^p]^{1/p} \\
 &  \qquad\qquad   \qquad\qquad  + \EE[|\Xtpi_1- g(\Wtpi_{t_1}, \ldots, \Wtpi_{t_{n}},I_{t_1}(\Wtpi), \ldots, I_{t_{n}}(\Wtpi) )|^p]^{1/p} \bigr) \\
 & \qquad\qquad \ge \frac{1}{2} \EE[|X_1- \Xtpi_1|^p]^{1/p}, 
	\end{align*}
which finishes the proof of the lemma. 
\end{proof}

Next, we use the transformation $G_\mu$, see~\eqref{trans}, to switch from a solution of the SDE~\eqref{sde0} to a  solution of the SDE~\eqref{sde1NEW}, see Lemma~\ref{lem2NEW}.

Let
\begin{equation}\label{extra2}
	Y=(G_\mu(X_t))_{t\in[0,1]}
\end{equation}
and for every $\tau\in \mathcal T$ define
\begin{equation}\label{extra3}
	\widetilde Y^\tau=(G_\mu(\widetilde X^\tau_t))_{t\in[0,1]}.
\end{equation}

\begin{lemma}\label{lemf2} Let $\mu\colon\R\to \R$ be measurable and bounded with $\sup_{y \in \R} |\int_0^y \mu(z) \, dz| < \infty$. 
	Then there exists $c\in (0,\infty)$ such that for every $p\in[1,\infty)$ and every $\tau \in \mathcal T$,
	\begin{equation}\label{gL2}
		\EE\bigl[|X_1-\widetilde X^\tau_1|^p\bigr]^{1/p} \geq c\,   \EE[|Y_1-\widetilde Y^\tau_1|^p]^{1/p}.
	\end{equation}
\end{lemma}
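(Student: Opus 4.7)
The plan is to exploit the Lipschitz continuity of $G_\mu$ established in Lemma~\ref{lem1NEW}(i), which is essentially all that is needed here.

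By Lemma~\ref{lem1NEW}(i), under the assumption that $\mu$ is bounded (which in particular implies that $\mu$ is locally integrable) and $\sup_{y \in \R} |\int_0^y \mu(z) \, dz| < \infty$, the function $G_\mu$ is continuously differentiable with $c_1 \le G_\mu' \le c_2$ for some constants $c_1, c_2 \in (0,\infty)$. In particular, $G_\mu$ is globally Lipschitz continuous with Lipschitz constant bounded by $c_2$, so that
\[
|G_\mu(x) - G_\mu(y)| \le c_2 \, |x - y|
\]
for all $x, y \in \R$.

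Applying this pointwise with $x = X_1(\omega)$ and $y = \widetilde X^\pi_1(\omega)$ and recalling the definitions \eqref{extra2} and \eqref{extra3} of $Y$ and $\widetilde Y^\pi$, we obtain
\[
|Y_1 - \widetilde Y^\pi_1| = |G_\mu(X_1) - G_\mu(\widetilde X^\pi_1)| \le c_2 \, |X_1 - \widetilde X^\pi_1|
\]
$\PP$-almost surely. Taking $p$-th moments and then $p$-th roots on both sides yields
\[
\EE\bigl[|Y_1 - \widetilde Y^\pi_1|^p\bigr]^{1/p} \le c_2 \, \EE\bigl[|X_1 - \widetilde X^\pi_1|^p\bigr]^{1/p},
\]
and the claimed inequality follows with $c = 1/c_2$, a constant that depends only on $\mu$ and $p$ (in fact only on $\mu$) and is independent of $\pi \in \Pi$.

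There is no real obstacle here: the lemma is a direct consequence of the Lipschitz property of $G_\mu$ from Lemma~\ref{lem1NEW}(i), and the uniformity in $\pi$ is automatic since $c_2$ is a deterministic constant not depending on the discretization. The only thing to double-check is that Lemma~\ref{lem1NEW} is indeed applicable under the stated hypotheses, which is immediate because the boundedness of $\mu$ ensures local integrability and the additional assumption $\sup_{y\in\R} |\int_0^y\mu(z)\,dz|<\infty$ is exactly the hypothesis of Lemma~\ref{lem1NEW}.
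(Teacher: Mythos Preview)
The proposal is correct and takes essentially the same approach as the paper: both invoke the Lipschitz continuity of $G_\mu$ from Lemma~\ref{lem1NEW}(i) to deduce the estimate directly. Your version simply spells out the one-line argument in more detail.
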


\begin{proof}
	By Lemma~\ref{lem1NEW}(i), the transformation $G_\mu$ is Lipschitz continuous, which obviously implies the claimed estimate.
\end{proof}

\framebox{text}

\begin{lemma}\label{lem3NEWa} 
	Let $\alpha\in (0,1)$
	and let $\mu \in C^\alpha(\R)$ be bounded  with $\sup_{t \in \R} |\int_0^t \mu(z) \, dz| < \infty$. Then, for every $p\in (0,\infty)$ there exists $c\in (0,\infty)$ such that 
	for all $n\in\N$ and all $\tau\in\mathcal T$ with $\{\ell/n\,|\, \ell\in\{1,\dots,n\}\} \subset \tau$, 
\begin{equation}\label{rst0000}
\max_{t\in \tau}	\EE\bigl[	|X_{t}-\tX^\tau_{t}|^p\bigr] \le \frac{c}{n^{p(1+\alpha)/2}}.
\end{equation}
\end{lemma}
\begin{proof}
		Let $n,m\in\N$ with $n\le m$ and let $\tau\in \mathcal T_m$ with $ \{\ell/n\,|\,\ell\in\{ 1,\dots,n\}\}\subset \tau$.
Let $Y^M_{n}$ and $\tY^{\tau,M}_{n}$ denote the continuous-time Milstein-type schemes with $n$ equidistant steps for $Y$ and $\tY^\tau$, respectively, see~\eqref{mil}.
Observe that by~\eqref{uv2a} we have $W_t = \Wtpi_t$ for all $t\in\tau$ and, as a consequence, $Y^M_{n,t} = \tY^{\tau,M}_{n,t}$ for every 
$t\in \{\ell/n\,|\, \ell\in\{0,\dots, n\}\}$. 
For $t\in \tau$ put $\utn=\lfloor t n\rfloor/n$. Then, for every $t\in \tau$,
\begin{align*}
	X_{t}-\tX^\tau_{t} & = (X_{\utn} - \tX^\tau_{\utn}) +	(X_{t} - X_{\utn } ) - (\tX^\tau_{t} - \tX^\tau_{\utn } ) \\
	& =(G_\mu^{-1}(Y_{\utn}) -G_\mu^{-1}(Y^M_{n,\utn}))  + (G_\mu^{-1}(\tY^{\tau,M}_{n,\utn})-G_\mu^{-1}(\tY^\tau_{\utn})) + \int_{\utn}^{t} ( \mu(X_s) - \mu(\tX^\tau_s))\, ds,
\end{align*}
which implies
\[
	|X_{t}-\tX^\tau_{t}| \le \|(G_\mu^{-1})'\|_\infty\, \bigl( \|Y -Y^M_{n}\|_\infty  + \|\tY^\tau-\tY^{\tau,M}_{n}\|_\infty\bigr) + \frac{2\|\mu\|_\infty}{n}.
\]
By Lemma~\ref{lem1NEW} and the boundedness of $\mu$  we have $\|(G_\mu^{-1})'\|_\infty + 2\|\mu\|_\infty< \infty$. Now, apply Proposition~\ref{prop1NEW} to complete the proof. 
	\end{proof}

For every $n\in\N$ we define 
\[
\tPi_n = \bigl\{   \{t_1,\dots, t_{5n}\}\,|\, 0<t_1 <\dots < t_{5n}=1, \{j/(4n)\,|\, j\in\{1,\dots, 4n\}\} \subset \{t_1,\dots,t_{5n} \}\bigr\}.
\]
Clearly, every $\tau\in \tPi_n$ satisfies
\begin{equation}\label{ndisc1}
	\max_{i\in \{1,\dots,5n\}} (t_i -t_{i-1}) \le 1/(4n).
\end{equation}
Moreover, it is easy to check that 
\begin{equation}\label{ndisc2}
	\forall \tau \in \mathcal T_n \, \exists \tpi\in \tPi_n\colon \, \tau \subset \tpi	
\end{equation}
and 
\begin{equation}\label{ndisc3}
	\forall \tau \in \tPi_n\colon \, \#\bigl\{ i\in \{2,\dots, 5n\}\,|\, t_{i-1}\ge 1/2\text{ and } 
	t_i-t_{i-1} = 1/(4n)
	\} \ge n.
\end{equation}

\begin{lemma}\label{lemf3NEW} Let $\alpha\in (0,1)$
	 and let $\mu \in C^\alpha(\R)$ be bounded  with $\sup_{t \in \R} |\int_0^t \mu(z) \, dz| < \infty$.
	Then there exist  $c_1,c_2\in (0,\infty)$ 
such that for all $n\in\N$, all $\tau=\{t_1,\dots,t_{5n}\}\in\tPi_n$ with $0<t_1<\dots <t_{5n}=1$ and all $ i\in \{1,\dots,5n\}$, 
\begin{equation}\label{iter1}
	\begin{aligned}
		\EE\bigl[|Y_{t_i} - \widetilde Y^\tau_{t_i}|^2\bigr] & \ge \left(1-\frac{c_1}{n}\right)\, \EE\bigl[|Y_{t_{i-1}} - \widetilde Y^\tau_{t_{i-1}}|^2\bigr] \\
		& \qquad\qquad + c_2\, \EE\bigl[|(X_{t_i}- X_{t_{i-1}}) - (\tX^\tau_{t_i}- \tX^\tau_{t_{i-1}})|^2\bigr] - \frac{c_1}{n^{2+2\alpha}}.
	\end{aligned}
\end{equation}
\end{lemma}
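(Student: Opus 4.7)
The plan is to perform a one-step analysis of $\EE[|Y_{t_i}-\widetilde Y^\pi_{t_i}|^2]$ on the interval $[t_{i-1},t_i]$, exploiting the fact that $t_{i-1},t_i\in\pi$ so by \eqref{qx1} the two driving Brownian motions share the same increment $\delta W:=W_{t_i}-W_{t_{i-1}}=\widetilde W^\pi_{t_i}-\widetilde W^\pi_{t_{i-1}}$, which by \eqref{qx2} is independent of $\Fc^{W,\widetilde W^\pi}_{t_{i-1}}$. Set $h:=t_i-t_{i-1}\le 1/(4n)$ by \eqref{ndisc1}, $D_{i-1}:=Y_{t_{i-1}}-\widetilde Y^\pi_{t_{i-1}}$ and $E_i:=(Y_{t_i}-Y_{t_{i-1}})-(\widetilde Y^\pi_{t_i}-\widetilde Y^\pi_{t_{i-1}})$. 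The identity
\[
\EE[|Y_{t_i}-\widetilde Y^\pi_{t_i}|^2]=\EE[D_{i-1}^2]+2\EE[D_{i-1}E_i]+\EE[E_i^2]
\]
reduces the claim to showing that $\EE[D_{i-1}E_i]\ge-\tfrac{c_1}{2n}\EE[D_{i-1}^2]$ and $\EE[E_i^2]\ge c_2\EE[(\Delta X-\Delta\widetilde X^\pi)^2]-c_1/n^{2+2\alpha}$, where $\Delta X:=X_{t_i}-X_{t_{i-1}}$ and $\Delta\widetilde X^\pi:=\widetilde X^\pi_{t_i}-\widetilde X^\pi_{t_{i-1}}$.

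For the Milstein structure, I expand, for $Z\in\{Y,\widetilde Y^\pi\}$ driven by the corresponding $\widehat W\in\{W,\widetilde W^\pi\}$,
\[
Z_{t_i}-Z_{t_{i-1}}=b_\mu(Z_{t_{i-1}})\,\delta W+\tfrac{1}{2}(b_\mu b_\mu')(Z_{t_{i-1}})\bigl((\delta W)^2-h\bigr)+R^Z_i,
\]
with Milstein remainder $R^Z_i$ analysed as in \eqref{mil1}--\eqref{mil7}, but restricted to a single subinterval of length $h$. Combining the $\alpha$-Hölder estimate $|b_\mu(y)-b_\mu(x)-b_\mu'(x)(y-x)|\le c|y-x|^{1+\alpha}$ (from $\mu\in C^\alpha$ and the Lipschitzness of $G_\mu^{-1}$ in Lemma~\ref{lem1NEW}) with the conditional moment bound $\EE[|Z_s-Z_{t_{i-1}}|^{2(1+\alpha)}\mid\Fc^{W,\widetilde W^\pi}_{t_{i-1}}]\le c(s-t_{i-1})^{1+\alpha}$ (from boundedness of $b_\mu$ and BDG), one aims for the refined local bound $\EE[|R^Z_i|^2]\le c\,h^{2+2\alpha}$. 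Writing $E_i=(M_i-\widetilde M^\pi_i)+(R^Y_i-R^{\widetilde Y^\pi}_i)$ with the Milstein differences $M_i,\widetilde M^\pi_i$, the cross term $\EE[D_{i-1}E_i]$ vanishes exactly: the Milstein part contributes zero because $\delta W$ and $(\delta W)^2-h$ are mean-zero and independent of $\Fc^{W,\widetilde W^\pi}_{t_{i-1}}$, while $\EE[R^Z_i\mid\Fc^{W,\widetilde W^\pi}_{t_{i-1}}]=0$ since both $Z_{t_i}-Z_{t_{i-1}}$ and its Milstein increment are martingale-type stochastic-integral increments on $[t_{i-1},t_i]$ (the bridges in the construction of $\widetilde W^\pi$ being independent of $\Fc^W$). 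The $(1-c_1/n)$ factor in the statement therefore represents slack to absorb $O(h^2)\EE[D_{i-1}^2]$ error terms produced in the next step.

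To lower bound $\EE[E_i^2]$, the elementary inequality $(a+b)^2\ge\tfrac{1}{2}a^2-b^2$ with $a=M_i-\widetilde M^\pi_i$ and $b=R^Y_i-R^{\widetilde Y^\pi}_i$ yields $\EE[E_i^2]\ge\tfrac{1}{2}\EE[(M_i-\widetilde M^\pi_i)^2]-c/n^{2+2\alpha}$. Substituting $b_\mu(Y_{t_{i-1}})=G_\mu'(X_{t_{i-1}})$ and $(b_\mu b_\mu')(Y_{t_{i-1}})=-2\mu(X_{t_{i-1}})G_\mu'(X_{t_{i-1}})$ from Lemma~\ref{lem2NEW}, conditioning on $\Fc^{W,\widetilde W^\pi}_{t_{i-1}}$ and using $\EE[\delta W]=0,\ \EE[(\delta W)^2]=h,\ \EE[\delta W((\delta W)^2-h)]=0,\ \EE[((\delta W)^2-h)^2]=2h^2$ gives
\[
\EE\bigl[(M_i-\widetilde M^\pi_i)^2\,\bigm|\,\Fc^{W,\widetilde W^\pi}_{t_{i-1}}\bigr]=h\,a_i^2+2h^2\,c_i^2,
\]
with $a_i:=G_\mu'(X_{t_{i-1}})-G_\mu'(\widetilde X^\pi_{t_{i-1}})$ and $c_i:=\mu(\widetilde X^\pi_{t_{i-1}})G_\mu'(\widetilde X^\pi_{t_{i-1}})-\mu(X_{t_{i-1}})G_\mu'(X_{t_{i-1}})$. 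Dropping the non-negative $h\,a_i^2$, splitting $c_i=-\beta_i\,G_\mu'(X_{t_{i-1}})-\mu(\widetilde X^\pi_{t_{i-1}})\,a_i$ with $\beta_i:=\mu(X_{t_{i-1}})-\mu(\widetilde X^\pi_{t_{i-1}})$, using Young's inequality pointwise and the bound $|a_i|\le c|D_{i-1}|$ (Lemma~\ref{lem1NEW}), one obtains $2h^2\EE[c_i^2]\ge c_1^2\,h^2\,\EE[\beta_i^2]-c\,h^2\,\EE[D_{i-1}^2]$. Finally, the identity $\Delta X-\Delta\widetilde X^\pi=\int_{t_{i-1}}^{t_i}(\mu(X_s)-\mu(\widetilde X^\pi_s))\,ds=\beta_i h+Q_i$ together with $\EE[Q_i^2]\le c\,h^{2+\alpha}$ (from the $\alpha$-Hölder property of $\mu$ and $\EE[|X_s-X_{t_{i-1}}|^{2\alpha}]\le c(s-t_{i-1})^\alpha$) gives $h^2\EE[\beta_i^2]\ge\tfrac{1}{2}\EE[(\Delta X-\Delta\widetilde X^\pi)^2]-c\,h^{2+\alpha}$. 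Assembling and absorbing the $c\,h^2\EE[D_{i-1}^2]\le(c_1/n)\EE[D_{i-1}^2]$ term into the prefactor completes the recursion.

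The main obstacle I foresee is establishing the sharp one-step Milstein remainder bound $\EE[|R^Z_i|^2]\le c\,h^{2+2\alpha}$: straightforward application of the argument in \eqref{mil4}--\eqref{mil7} to a single subinterval of length $h$ yields only $\EE[|R^Z_i|^2]\le c\,h^{2+\alpha}$, and gaining the extra $h^\alpha$ factor will likely require treating the difference $R^Y_i-R^{\widetilde Y^\pi}_i$ jointly and exploiting the $\alpha$-Hölder modulus of $b_\mu'=-2\mu\circ G_\mu^{-1}$ along the coupled paths, rather than bounding each remainder separately.
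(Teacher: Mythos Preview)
Your approach has a genuine gap at exactly the point you flag, and the fix you propose will not close it. Both error sources in your argument --- the one-step Milstein remainder $R^Z_i$ and the term $Q_i$ relating $h^2\beta_i^2$ to $(\Delta X-\Delta\widetilde X^\pi)^2$ --- are only $O(h^{2+\alpha})=O(n^{-(2+\alpha)})$ in $L^2$, not $O(n^{-(2+2\alpha)})$ as the lemma demands. You correctly identify the first of these but not the second, and both are fatal: summed over $5n$ steps, an $n^{-(2+\alpha)}$ error becomes $n^{-(1+\alpha)}$, which is the \emph{same} order as the main signal in \eqref{thm2_2NEW}, so the final lower bound would collapse. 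Working with the difference $R^Y_i-R^{\widetilde Y^\pi}_i$ offers no cancellation, because on $(t_{i-1},t_i)$ the two driving Brownian motions $W$ and $\widetilde W^\pi$ are \emph{not} coupled (only their endpoints coincide), so the two stochastic integrals are genuinely uncorrelated at leading order.

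The paper's route is structurally different and avoids both problems. Instead of a Milstein expansion at the $Y$-level, it works at the $X$-level via the identity $Y_{t_i}-\widetilde Y^\pi_{t_i}=G_\mu(X_{t_i})-G_\mu(\widetilde X^\pi_{t_i})$ together with $X_{t_i}-\widetilde X^\pi_{t_i}=(X_{t_{i-1}}-\widetilde X^\pi_{t_{i-1}})+(\Delta X-\Delta\widetilde X^\pi)$. Since $G_\mu'$ is Lipschitz (Lemma~\ref{lem1NEW}(iii)), the Taylor remainder of $G_\mu$ is \emph{quadratic} in $|X_{t_i}-\widetilde X^\pi_{t_i}|$, which is why the paper's remainder $\beta_i$ satisfies $\beta_i^2\le c\,(|X_{t_i}-\widetilde X^\pi_{t_i}|^2+|X_{t_{i-1}}-\widetilde X^\pi_{t_{i-1}}|^2)^2$. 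The crucial ingredient you are missing is then the \emph{a priori} bound $\EE[|X_{t_i}-\widetilde X^\pi_{t_i}|^p]\le c\,n^{-p(1+\alpha)/2}$ (equation~\eqref{rst444}): the equidistant Milstein schemes $Y^M_{4n}$ and $\widetilde Y^{\pi,M}_{4n}$ coincide at all points $j/(4n)$ because $W$ and $\widetilde W^\pi$ agree there, so $|X_{t_i}-\widetilde X^\pi_{t_i}|$ is controlled by two Milstein approximation errors plus an $O(1/n)$ drift term, and Proposition~\ref{prop1NEW} delivers the rate. With $p=4$ this gives $\EE[\beta_i^2]\le c\,n^{-(2+2\alpha)}$ directly. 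In short, the sharp error comes from feeding a \emph{global} upper bound back into the one-step recursion, not from sharpening the local Milstein remainder.
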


\begin{proof}
	The proof of Lemma~\ref{lemf3NEW} is similar to the proof 
	of Lemma 7 in \cite{EMGY24}.
Let $n\in\N$ and  $\tau=\{t_1,\dots,t_{5n}\}\in\tPi_n$ with $0<t_1<\dots <t_{5n}=1$ and
put 
\[
\Delta_i  = \EE\bigl[|Y_{t_i} - \widetilde Y^\tau_{t_i}|^2\bigr]^{1/2}
\]
for  $i\in\{0,\dots,5n\}$ and 
\[
\alpha_i = (Y_{t_i}- \tY^\tau_{t_i}) - (Y_{t_{i-1}}- \tY^\tau_{t_{i-1}})
\]
for  $i\in\{1,\dots,5n\}$.  

Fix $i\in\{1,\dots,5n\}$. 
Throughout the following we use $c\in (0,\infty)$ to denote a positive constant that does not depend on $n$ or $\tau$ or $i$. The value of $c$ may change from 
line to line.
By Lemma~\ref{lem1NEW}(i) and Lemma~\ref{lem2NEW}, the stochastic process $(b_\mu(Y_t))_{t\in[0,1]}$ is  bounded, measurable and adapted to 
$\Fc=\bigl(\Fc_t = \sigma\bigl( \sigma(\{W_s \,|\, s\in [0,t]\})\cup \mathcal N(\PP))\bigr)
\bigr)_{t\in [0,1]}$, where $\mathcal N(\PP)$ is the collection of $\PP$-null sets.
Hence, 	for all $r,t\in[0,1]$,
\begin{equation}\label{rstuv1}
	\EE[\tY_r^\tau \tY^\tau_t]	=	\EE[Y_r Y_t]  = \EE\biggl[\int_0^{\min(r,t)} b_\mu^2(Y_u)\, du\biggr] + G^2_\mu(x_0).
\end{equation}	
By the Lipschitz continuity of $b_\mu$,  strong existence and pathwise uniqueness hold for the
SDE \eqref{sde1NEW}. Hence, there exists a measurable function $g\colon (C([0,t_{i-1}];\R))^2\to \R$  such that $\PP$-almost surely, 
\[
\tY_{t_{i-1}}^\tau = g((W_u)_{u\in [0,t_{i-1}]}, ( \Ztpi_u)_{u\in [0,t_{i-1}]}),
\]
see, e.g., \cite[Theorem 1]{Ka96}.
Since $W$ and $\Ztpi$ are independent and $(W_u)_{u\in [0,t_{i-1}]}$ is measurable with respect to $\Fc_{t_{i-1}}$,
we conclude that
\begin{equation}\label{rstuv2}
	\begin{aligned}
		& \EE[\tY_{t_{i-1}}^\tau  (	Y_{t_i}-Y_{t_{i-1}})] \\
		& \qquad\qquad = \EE\Bigl[g\bigl((W_u)_{u\in [0,t_{i-1}]}, (\Ztpi_u)_{u\in [0,t_{i-1}]}\bigr) \int_{t_{i-1}}^{t_i} b_\mu(Y_s)\, dW_s\Bigr]\\
		& \qquad\qquad = \int_{C([0,t_{i-1}];\R)} \EE\Bigl[g\bigl((W_u)_{u\in [0,t_{i-1}]}, f\bigr) \int_{t_{i-1}}^{t_i} b_\mu(Y_s)\, dW_s\Bigr]\, \PP^{(\Ztpi_u)_{u\in [0,t_{i-1}]} }(df)\\
		& \qquad\qquad = \int_{C([0,t_{i-1}];\R)} \EE\Bigl[ \int_{t_{i-1}}^{t_i} g\bigl((W_u)_{u\in [0,t_{i-1}]}, f\bigr) b_\mu(Y_s)\, dW_s\Bigr]\, \PP^{(\Ztpi_u)_{u\in [0,t_{i-1}]} }(df).
	\end{aligned} 
\end{equation}
By the boundedness of $ b_\mu$  and \eqref{rstuv1},
\[
\EE\bigl[\sup_{s\in [t_{i-1},t_i]}| \tY_{t_{i-1}}^\tau b_\mu(Y_s)|^2 \bigr] \le \|b_\mu\|^2_\infty \EE \bigl[|\tY_{t_{i-1}}^\tau|^2\bigr] \le 2\|b_\mu\|^2_\infty ( |G_\mu(x_0)|^2 + \|b_\mu\|^2_\infty) <\infty, 
\]
which implies that for $\PP^{(\Ztpi_u)_{u\in [0,t_{i-1}]} }$-almost all $f\in C([0,t_{i-1}];\R)$,
\[
\EE\bigl[\sup_{s\in [t_{i-1},t_i]}|g\bigl((W_u)_{u\in [0,t_{i-1}]}, f\bigr) b_\mu(Y_s)|^2 \bigr]  <\infty.
\]
Hence, for $\PP^{(\Ztpi_u)_{u\in [0,t_{i-1}]} }$-almost all $f\in C([0,t_{i-1}];\R)$,
\[
\EE\Bigl[ \int_{t_{i-1}}^{t_i} g\bigl((W_u)_{u\in [0,t_{i-1}]}, f\bigr) b_\mu(Y_s)\, dW_s\Bigr] =0,
\]
which jointly with~\eqref{rstuv2} yields
\begin{equation}\label{rstuv3}
	\EE[\tY_{t_{i-1}}^\tau  (	Y_{t_i}-Y_{t_{i-1}})] =0.
\end{equation}
For reasons of symmetry, we also have 
\begin{equation}\label{rstuv4}
	\EE[Y_{t_{i-1}}  (	\tY^\tau_{t_i}-\tY^\tau_{t_{i-1}})] =0.
\end{equation}

By~\eqref{rstuv1}, ~\eqref{rstuv3} and ~\eqref{rstuv4} we obtain that for all $U\in \{Y_{t_{i-1}}, \tY^\tau_{t_{i-1}}\}$ and $V\in \{Y_{t_i}-Y_{t_{i-1}}, \tY^\tau_{t_i}-\tY^\tau_{t_{i-1}}\}$,
\[
\EE[ UV] = 0.
\]
As a consequence we get $\EE[ (Y_{t_{i-1}}- \tY^\tau_{t_{i-1}})\, \alpha_i] =0$, and therefore 
\begin{equation}\label{q1}
	\Delta_i^2 = \Delta_{i-1}^2 + 2\EE[ (Y_{t_{i-1}}- \tY^\tau_{t_{i-1}})\, \alpha_i] + \EE[\alpha_i^2] =  \Delta_{i-1}^2 + \EE[\alpha_i^2].
\end{equation}

By the smoothness properties of the function $G_\mu$, see Lemma~\ref{lem1NEW}, we derive
\begin{align*}
	\alpha_i & = (G_\mu(X_{t_i}) - G_\mu(\tX^\tau_{t_i})) -  (G_\mu(X_{t_{i-1}}) - G_\mu(\tX^\tau_{t_{i-1}})) \\
	& = \int_{\tX^\tau_{t_i}}^{X_{t_i}} G_\mu'(u)\, du -  \int_{\tX^\tau_{t_{i-1}}}^{X_{t_{i-1}}} G_\mu'(u)\, du \\
	& = \beta_i + \gamma_i +\delta_i,
\end{align*}
where
\begin{align*}
	\beta_i & = \int_{\tX^\tau_{t_i}}^{X_{t_i}} (G_\mu'(u) - G_\mu'(\tX^\tau_{t_i}))\, du -  \int_{\tX^\tau_{t_{i-1}}}^{X_{t_{i-1}}} (G_\mu'(u)-G_\mu'(\tX^\tau_{t_{i-1}}))\, du,\\
	\gamma_i & = \bigl(G_\mu'(\tX^\tau_{t_i})-G_\mu'(\tX^\tau_{t_{i-1}})\bigr)\, (X_{t_{i-1}} - \tX^\tau_{t_{i-1}}),\\
	\delta_i & = G_\mu'(\tX^\tau_{t_i})\, \bigl( (X_{t_i} - \tX^\tau_{t_i}) - (X_{t_{i-1}} - \tX^\tau_{t_{i-1}})\bigr).
\end{align*}
Hence
\begin{equation}\label{q2}
	\EE\bigl[\alpha_i^2\bigr] \ge \frac{1}{2}\EE\bigl[\delta_i^2\bigr] - \EE\bigl[(\beta_i + \gamma_i)^2\bigr]
	\ge \frac{1}{2}\EE\bigl[\delta_i^2\bigr] - 2\EE\bigl[\beta_i^2\bigr] -2\EE\bigl [\gamma_i^2\bigr].
\end{equation}

We proceed by estimating $\EE[\beta_i^2]$. Employing again Lemma~\ref{lem1NEW} we get
\begin{equation}\label{q3a}
	\begin{aligned}
		\beta_i^2& = \biggl| -2\int_{\tX^\tau_{t_i}}^{X_{t_i}} \int_{\tX^\tau_{t_i}}^u\mu(v)G_\mu'(v)\, dv\, du +2  \int_{\tX^\tau_{t_{i-1}}}^{X_{t_{i-1}}} \int_{\tX^\tau_{t_{i-1}}}^u \mu(v)G_\mu'(v)\, dv\, du\biggr|^2\\
		& \le \|\mu\|_\infty^2\, \|G_\mu'\|_\infty^2 \,\bigl( |X_{t_i}-\tX^\tau_{t_i}  |^2 +  |X_{t_{i-1}}-\tX^\tau_{t_{i-1}}|^2\bigr)^2.
	\end{aligned}
\end{equation} 

Hence, by Lemma \ref{lem3NEWa} with $p=4$, we obtain
\begin{equation}\label{q4a}
	\EE\bigl[\beta_i^2\bigr] \le \frac{c}{n^{2+2\alpha}}.
\end{equation}

Next, we provide an upper bound for $\EE\bigl[\gamma_i^2\bigr]$. Clearly, $X_{t_{i-1}}$ and $\tX^\tau_{t_{i-1}}$ are measurable with respect to $\Fc^{W,\tW^\tau}_{t_{i-1}}$, and therefore,
\begin{equation}\label{q5}
	\EE\bigl[\gamma_i^2\bigr] = \EE\bigl[(X_{t_{i-1}}-\tX^\tau_{t_{i-1}})^2\, \EE\bigl[ (G_\mu'(\tX^\tau_{t_i})-G_\mu'(\tX^\tau_{t_{i-1}}))^2\bigr|\Fc^{W,\tW^\tau}_{t_{i-1}}\bigr]\bigr].
\end{equation}
By Lemma~\ref{lem1NEW} we obtain
\begin{equation}\label{q6}
	(X_{t_{i-1}}-\tX^\tau_{t_{i-1}})^2 = (G_\mu^{-1}(Y_{t_{i-1}})-G_\mu^{-1}(\tY^\tau_{t_{i-1}}))^2 \le \|(G_\mu^{-1})'\|_\infty^2\, 	(Y_{t_{i-1}}-\tY^\tau_{t_{i-1}})^2
\end{equation}
as well as
\begin{equation}\label{q7}
	\begin{aligned}
		(G_\mu'(\tX^\tau_{t_i})-G_\mu'(\tX^\tau_{t_{i-1}}))^2 & = \biggl(\int_{\tX^\tau_{t_{i-1}}}^{\tX^\tau_{t_i}} G_\mu''(u)\, du\biggr)^2 \\
		& \le 4\|\mu\|_\infty^2 \|G_\mu'\|_\infty^2\, (\tX^\tau_{t_{i}}-\tX^\tau_{t_{i-1}})^2\\
		& = 4\|\mu\|_\infty^2 \|G_\mu'\|_\infty^2 \biggl(\int_{t_{i-1}}^{t_i} \mu (\tX^\tau_t)\, dt + W_{t_{i}}-W_{t_{i-1}}\biggr)^2\\
		& \le 8\|\mu\|_\infty^2 \|G_\mu'\|_\infty^2 \left( \frac{\|\mu\|_\infty^2}{ 16n^2} +  (W_{t_{i}}-W_{t_{i-1}})^2\right).
	\end{aligned}
\end{equation}
Using~\eqref{qx2} we derive from~\eqref{q7} that
\begin{equation}\label{q8}
	\begin{aligned}
		\EE\bigl[ (G_\mu'(\tX^\tau_{t_i})-G_\mu'(\tX^\tau_{t_{i-1}}))^2\bigr|\Fc^{W,\tW^\tau}_{t_{i-1}}\bigr] 
		& \le c \left(\frac{1}{ n^2}+ \EE[ (W_{t_{i}}-W_{t_{i-1}})^2|\Fc^{W,\tW^\tau}_{t_{i-1}}]\right)\\
		&\le c \left( \frac{1}{ n^2} + \frac{1}{4n}\right).
	\end{aligned}	
\end{equation}
Combining~\eqref{q5}, ~\eqref{q6} and~\eqref{q8} we get 
\begin{equation}\label{q9}
	\EE\bigl[\gamma_i^2\bigr] \le \frac{c}{n} \Delta_{i-1}^2.	
\end{equation}
Combining~\eqref{q2}, ~\eqref{q4a} and~\eqref{q9} yields 
\begin{equation}\label{q10}
	\EE\bigl[\alpha_i^2\bigr] \ge \frac{1}{2}\EE[\delta_i^2] - \frac{c}{n} \Delta_{i-1}^2 - \frac{c}{n^{2+2\alpha}}.
\end{equation}
Finally, combine~\eqref{q1} with \eqref{q10}, use 
\[
\EE\bigl[\delta_i^2\bigr] \ge \inf_{x\in\R} |G_\mu'(x)|^2\, \EE\bigl[|(X_{t_i}- X_{t_{i-1}}) - (\tX^\tau_{t_i}- \tX^\tau_{t_{i-1}})|^2\bigr] 
\] 
and observe that $\inf_{x\in\R} |G_\mu'(x)| >0$, see Lemma~\ref{lem1NEW}, to complete the proof of the lemma.
\end{proof}

\begin{lemma}\label{lemf4NEW}
	Let $\alpha \in (0,1)$
	and let $\mu \in C^\alpha(\R)$ be bounded  with $\sup_{t \in \R} |\int_0^t \mu(z) \, dz| < \infty$.
	Then there exists  $c\in (0,\infty)$ 
	such that for all $n\in\N$, all $\tau=\{t_1,\dots,t_{5n}\}\in\tPi_n$ with $0<t_1<\dots <t_{5n}=1$ and all $ i\in \{1,\dots,5n\}$, 
	\begin{equation}\label{iter2a}
		\begin{aligned}
			& \EE\bigl[|(X_{t_i}- X_{t_{i-1}}) - (\tXp_{t_i}- \tXp_{t_{i-1}})|^2\bigr] \\
			& \qquad \ge \frac{1}{2} \EE\Bigl[ \Bigl| \int_{t_{i-1}}^{t_i} \bigl (\mu (X_{t_{i-1} } +W_t - W_{t_{i-1} } ) -  \mu (X_{t_{i-1}} +\tWp_t - \tWp_{t_{i-1}})  \bigr)\,
			dt\Bigr|^2\Bigr] -\frac{c}{ n^{2+\alpha(1+\alpha)}}.
		\end{aligned}
	\end{equation}
\end{lemma}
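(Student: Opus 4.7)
The plan is to rewrite the left-hand increments using the integral form of the SDE, insert two ``frozen-drift'' intermediates into the integrand, and control the two resulting error integrals by combining $\alpha$-H\"older continuity of $\mu$ with the $L^2$-estimate~\eqref{rst444}.

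Fix $n\in\N$, $\pi=\{t_1,\dots,t_{5n}\}\in\tPi^n$ and $i\in\{1,\dots,5n\}$. First I would subtract the integral equations for $X$ and $\tXp$ on $[t_{i-1},t_i]$ to obtain
\[
(X_{t_i}-X_{t_{i-1}})-(\tXp_{t_i}-\tXp_{t_{i-1}}) = \int_{t_{i-1}}^{t_i}\bigl(\mu(X_t)-\mu(\tXp_t)\bigr)\,dt + (W_{t_i}-W_{t_{i-1}})-(\tWp_{t_i}-\tWp_{t_{i-1}}),
\]
and use~\eqref{qx1} (together with $W_0=\tWp_0=0$ for the case $i=1$) to kill the two Brownian increments. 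Next I would decompose the integrand by inserting the frozen-drift points $X_{t_{i-1}}+W_t-W_{t_{i-1}}$ and $X_{t_{i-1}}+\tWp_t-\tWp_{t_{i-1}}$ and write
\[
\mu(X_t)-\mu(\tXp_t) = \delta_1(t) + \eta(t) + \delta_2(t),
\]
where $\eta(t)$ is exactly the integrand appearing in the main term of~\eqref{iter2a} and
\begin{align*}
\delta_1(t) & = \mu(X_t)-\mu(X_{t_{i-1}}+W_t-W_{t_{i-1}}),\\
\delta_2(t) & = \mu(X_{t_{i-1}}+\tWp_t-\tWp_{t_{i-1}})-\mu(\tXp_t).
\end{align*}
Denoting by $A,B,E_1,E_2$ the $dt$-integrals of $\mu(X_t)-\mu(\tXp_t),\eta(t),\delta_1(t),\delta_2(t)$ over $[t_{i-1},t_i]$, the identity $A=E_1+B+E_2$ combined with the elementary bound $|B|^2 \le 2|A|^2 + 4(|E_1|^2+|E_2|^2)$ reduces the lemma to proving $\EE[|E_1|^2]+\EE[|E_2|^2]\le c\, n^{-2-\alpha(1+\alpha)}$.

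For $E_1$: since $X_t-(X_{t_{i-1}}+W_t-W_{t_{i-1}})=\int_{t_{i-1}}^{t}\mu(X_s)\,ds$, the difference of arguments inside $\delta_1(t)$ is bounded in absolute value by $\|\mu\|_\infty/(4n)$, so $\alpha$-H\"older continuity of $\mu$ combined with $t_i-t_{i-1}\le 1/(4n)$ yields the pathwise estimate $|E_1|^2\le c\, n^{-2-2\alpha}$. For $E_2$: an analogous computation gives
\[
\bigl|(X_{t_{i-1}}+\tWp_t-\tWp_{t_{i-1}})-\tXp_t\bigr| \le |X_{t_{i-1}}-\tXp_{t_{i-1}}| + \|\mu\|_\infty/(4n),
\]
so by H\"older continuity of $\mu$ and the subadditivity $(a+b)^\alpha\le a^\alpha+b^\alpha$ for $a,b\ge 0$ we get $|\delta_2(t)|^2\le c\bigl(|X_{t_{i-1}}-\tXp_{t_{i-1}}|^{2\alpha}+n^{-2\alpha}\bigr)$. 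Cauchy--Schwarz, taking expectations, \eqref{rst444} with $p=2$, and Jensen's inequality applied to the concave map $x\mapsto x^\alpha$ yield
\[
\EE\bigl[|X_{t_{i-1}}-\tXp_{t_{i-1}}|^{2\alpha}\bigr]\le \EE\bigl[|X_{t_{i-1}}-\tXp_{t_{i-1}}|^{2}\bigr]^{\alpha}\le c\,n^{-\alpha(1+\alpha)},
\]
whence $\EE[|E_2|^2]\le c\, n^{-2-\alpha(1+\alpha)}$. Since $\alpha(1+\alpha)\le 2\alpha$ for $\alpha\in(0,1)$, the $E_1$-contribution is dominated by the $E_2$-contribution, which closes the estimate.

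The main technical point—rather than a real obstacle—is the correct choice of intermediates: both carry the same spatial base point $X_{t_{i-1}}$ (not $\tXp_{t_{i-1}}$), so that $\delta_1$ is a deterministic-like remainder of order $n^{-\alpha}$, while the random discrepancy $X_{t_{i-1}}-\tXp_{t_{i-1}}$ enters only through $\delta_2$, where it is controlled in $L^2$ by~\eqref{rst444} with exactly the right exponent to match $2+\alpha(1+\alpha)$ after applying Jensen.
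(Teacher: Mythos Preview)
Your proof is correct and follows essentially the same approach as the paper's own proof: the same frozen-drift decomposition with both intermediates based at $X_{t_{i-1}}$, the same $n^{-2-2\alpha}$ bound for $E_1$ via boundedness and H\"older continuity of $\mu$, and the same $n^{-2-\alpha(1+\alpha)}$ bound for $E_2$ via~\eqref{rst444} combined with Jensen's inequality. The only cosmetic difference is that you make the cancellation of the Brownian increments (via~\eqref{qx1}) and the Jensen step explicit, whereas the paper leaves these implicit.
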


\begin{proof}
	Let $n\in\N$ and  $\tau=\{t_1,\dots,t_{5n}\}\in\tPi_n$ with $0<t_1<\dots <t_{5n}=1$. 	Let $i \in \{1, \dots, n\}$.
	Throughout this proof we use $c\in (0,\infty)$ to denote a positive constant that 
	neither depends on $n$
	nor on $\tau$
	 nor on $i$. 
	The value of $c$ may vary from 
	occurence to occurence.
	
	Observing \eqref{uv2a}, we get 
	\begin{align*}
		&(X_{t_i}- X_{t_{i-1}}) - (\tXp_{t_i}- \tXp_{t_{i-1}}) \\
		& \qquad =  \int_{t_{i-1}}^{t_i} \bigl (\mu (X_{t_{i-1} } +W_t - W_{t_{i-1} } ) -  \mu (X_{t_{i-1}} +\tWp_t - \tWp_{t_{i-1}})  \bigr)\,
		dt \\
		& \qquad \qquad + \int_{t_{i-1}}^{t_i} \bigl (\mu (X_t) -  \mu (X_{t_{i-1}} + W_t - W_{t_{i-1}})  \bigr)\,
		dt\\
		& \qquad \qquad \qquad +  \int_{t_{i-1}}^{t_i} \bigl (\mu (X_{t_{i-1} } + \tWp_t - \tWp_{t_{i-1} } ) -  \mu (\tXp_t)  \bigr)\,
		dt
	\end{align*}	
and therefore
	\begin{equation}\label{eq7_1NEW}
		\begin{aligned}
			& \EE\bigl[|(X_{t_i}- X_{t_{i-1}}) - (\tXp_{t_i}- \tXp_{t_{i-1}})|^2\bigr] \\
			& \qquad \ge \frac{1}{2} \EE\Bigl[ \Bigl| \int_{t_{i-1}}^{t_i} \bigl (\mu (X_{t_{i-1} } +W_t - W_{t_{i-1} } ) -  \mu (X_{t_{i-1}} +\tWp_t - \tWp_{t_{i-1}})  \bigr)\,
			dt\Bigr|^2\Bigr] \\
			& \qquad \qquad - 2\Bigl(\EE\Bigl[ \Bigl| \int_{t_{i-1}}^{t_i} \bigl (\mu (X_t) -  \mu (X_{t_{i-1}} + W_t - W_{t_{i-1}})  \bigr)\,
			dt\Bigr|^2\Bigr] \\
			& \qquad \qquad \qquad \qquad + \EE\Bigl[ \Bigl| \int_{t_{i-1}}^{t_i} \bigl (\mu (X_{t_{i-1} } + \tWp_t - \tWp_{t_{i-1} } ) -  \mu (\tXp_t)  \bigr)\,
			dt\Bigr|^2\Bigr] \Bigr).
		\end{aligned}
	\end{equation}
Employing the $\alpha$-H\"older
 continuity and boundedness of $\mu$ as well as~\eqref{ndisc1} we get
	\begin{equation}\label{eq7_2NEW}
		\begin{aligned}
			&\EE\Bigl[ \Bigl| \int_{t_{i-1}}^{t_i} \bigl (\mu (X_t) -  \mu (X_{t_{i-1}} + W_t - W_{t_{i-1}})  \bigr)\,
			dt\Bigr|^2\Bigr] \\
			& \qquad \le c\, \EE\Bigl[\Bigl( \int_{t_{i-1}}^{t_i} \Bigl|\int_{t_{i-1}}^t \mu(X_u) \, du\Bigr|^{\alpha}\, dt  \Bigr)^2\Bigr]  \le c\, \Bigl(\int_{t_{i-1}}^{t_i} (t-t_{i-1})^\alpha\, dt\Bigr)^2 \le \frac{c}{ n^{2 + 2\alpha}}.
		\end{aligned}
	\end{equation}	
	Similarly, using the $\alpha$-H\"older continuity and boundedness of $\mu$ as well as~\eqref{ndisc1} and Lemma \ref{lem3NEWa} with $p= 2\alpha$, we obtain
	\begin{equation}\label{eq7_3NEW}
		\begin{aligned}
			&\EE\Bigl[ \Bigl| \int_{t_{i-1}}^{t_i} \bigl (\mu (X_{t_{i-1} } + \tWp_t - \tWp_{t_{i-1} } ) -  \mu (\tXp_t)  \bigr)\,
			dt\Bigr|^2\Bigr] \\
				& \qquad \le  c\, \EE\Bigl[ \Bigl(  \int_{t_{i-1}}^{t_i} \Bigl |  X_{t_{i-1}} - \tXp_{t_{i-1}} - \int_{t_{i-1}}^t \mu(\tXp_u) \, du\Bigr|^\alpha    \,
			dt\Bigr)^2\Bigr] \\
			& \qquad \le c\, \EE\Bigl[ \Bigl(  \int_{t_{i-1}}^{t_i} \bigl( |X_{t_{i-1}} - \tXp_{t_{i-1}}|^{\alpha} + (t-t_{i-1})^{\alpha}\bigr) \, dt \Bigr)^2\Bigr]\\
					& \qquad \le \frac{c}{n^2}\Bigl(\EE\bigl[|X_{t_{i-1}} - \tXp_{t_{i-1}}|^{2\alpha}\bigr] + \frac{1}{n^{2\alpha}}\Bigr) \\
					& \qquad \le \frac{c}{n^2}\Bigl(  \frac{1}{n^{(1+\alpha) \alpha}}+ \frac{1}{n^{2\alpha}}\Bigr). 
		\end{aligned}
	\end{equation}
Combining~\eqref{eq7_1NEW} with~\eqref{eq7_2NEW} and~\eqref{eq7_3NEW} and using the fact that $\alpha^2 \le \alpha$ completes the proof. 	
\end{proof}

We proceed with providing a lower bound for the 
first term on the right-hand side in \eqref{iter2a}.
In the sequel, we use
	\[
	\tau^*= \{1\}\in \mathcal T_1
	\]
	to denote the discretization that only contains the point $t_1=1$ and we put
	\begin{equation}\label{Axx}
	D(x) = \EE\Bigl[ \Bigl| \int_0^1 \bigl( \exp(-\bi x W_t) - \exp(-\bi x \Wtt_t)\bigr)\, dt \Bigr|^2 \Bigr]
\end{equation}
for $x\in\R$.

\begin{lemma}\label{NEWx}
	Let $\mu\colon\R\to \R$ 
	be measurable and  bounded and let $f\colon\R \rightarrow \R$ be measurable, $2\pi$-periodic and bounded. Then there exists $c\in (0,\infty)$ such that for all $n\in\N$, all $\tau=\{t_1,\dots,t_{5n}\}\in\tPi_n$ with $0<t_1<\dots <t_{5n}=1$ and all $ i\in \{1,\dots,5n\}$ with $t_{i-1} \ge 1/2$,
	\begin{align*}
	&	\EE\Bigl[\Bigl| \int_{t_{i-1}}^{t_i} \bigl (f (X_{t_{i-1} } +W_t - W_{t_{i-1} } ) -  f (X_{t_{i-1}} +\tWp_t - \tWp_{t_{i-1}})  \bigr)\, dt \Bigr|^2\Big] \\
	& \qquad\qquad  \qquad\qquad \ge c (t_i - t_{i-1})^{2} \sum_{j \in \Z} |\fh_j|^2 D\bigl(j \sqrt{t_i - t_{i-1}} \bigr).
	\end{align*}
\end{lemma}

\begin{proof}
	Let $n\in\N$, let $\tau=\{t_1,\dots,t_{5n}\}\in\tPi_n$ with $0<t_1<\dots <t_{5n}=1$  and let $i \in \{1, \dots, 5n\}$ with $t_{i-1} \ge 1/2$. 
	
	Since $\mu$ is
	measurable and
	 bounded, we may apply~\cite[Theorem 1]{QZ02} to obtain that, for every $t\in (0,1]$, 
	the distribution of $X_{t}$ has a Lebesgue density $p_{t}$ and that there exists  $c\in (0,\infty)$ such that
	\[
\inf_{t\in [1/2,1]} \inf_{x\in [0,2\pi]}	p_{t}(x) \ge c.
	\]
Hence, by~\eqref{qx2} 
	\begin{equation}\label{rsv1NEW}
		\begin{aligned}
			&	 \EE\Bigl[\Bigl| \int_{t_{i-1}}^{t_i} \bigl (f (X_{t_{i-1} } +W_t - W_{t_{i-1} } ) -  f (X_{t_{i-1}} +\widetilde W^\tau_t - \widetilde W^\tau_{t_{i-1}})  \bigr)\, dt \Bigr|^2\Big] \\
			& \qquad\qquad = \EE\Bigl[\int_{\R}  \Bigl| \int_{t_{i-1}}^{t_i} \bigl (f (x +W_t - W_{t_{i-1} } ) -  f (x +\tWp_t - \tWp_{t_{i-1}})  \bigr)\, dt \Bigr|^2	p_{t_{i-1}}(x)\, dx \Big]\\
			& \qquad \qquad \ge c\, \EE\Bigl[\int_{0}^{2\pi} \Bigl| \int_{t_{i-1}}^{t_i} \bigl (f (x +W_t - W_{t_{i-1} } ) -  f (x +\tWp_t - \tWp_{t_{i-1}})  \bigr)\, dt \Bigr|^2dx\Big].
		\end{aligned}
	\end{equation}

Note that 	
\begin{equation}\label{uv111a}
 (\overline W^{\tau}_t-W_{ t_{i-1} } )_{ t\in [t_{i-1},t_i] } \stackrel{d}{=}\Bigl( \frac{ t-t_{i-1}}{ \sqrt{t_i-t_{i-1}}} W_1\Bigr)_{t\in [t_{i-1},t_i]} \stackrel{d}{=}\sqrt{ t_i-t_{i-1}}\, \Bigl( \overline W^{\tau*}_{\frac{t-t_{i-1}}{t_i-t_{i-1}}    }\Bigr)_{t\in [t_{i-1},t_i]} 
\end{equation}
and, employing~\eqref{uv3}, we have
\begin{equation}\label{uv111b}
	\begin{aligned}
& \Bigl( B^\tau_t, \EE\Bigl[  B^\tau_t \Bigl|  \int_{t_{i-1}}^{t_i} B^\tau_s\,ds\Bigr]\Bigr)_{t\in [t_{i-1},t_i]}  \\
& \qquad\qquad  \stackrel{d}{=} \sqrt{t_i-t_{i-1}}  \Bigl( B^{\tau*}_{ \frac{t-t_{i-1} } { t_i-t_{i-1}}}, \frac{6(t_i-t)(t-t_{i-1})}{(t_i-t_{i-1})^3}  \int_{t_{i-1}}^{t_i} 
  B^{\tau*}_{\frac{s-t_{i-1} } {t_i-t_{i-1} } }\, ds \Bigr)_{t\in [t_{i-1},t_i]}\\
& \qquad\qquad = \sqrt{t_i-t_{i-1}}   \Bigl( B^{\tau*}_{ \frac{t-t_{i-1} } { t_i-t_{i-1}}}, \EE\Bigl[ B^{\tau*}_{ \frac{t-t_{i-1} } { t_i-t_{i-1}}}\Big| \int_0^1 B^{\tau*}_s \, ds\Bigr] \Bigr)_{t\in [t_{i-1},t_i]},
  \end{aligned}
\end{equation}
which, in particular, yields
\begin{equation}\label{uv111c}
	\begin{aligned}
	(\Ztpi_t)_{t\in [t_{i-1},t_i]} 	&  \stackrel{d}{=} 	(Z^{\tau}_t)_{t\in [t_{i-1},t_i]} = \Bigl( B^\tau_t - \EE\Bigl[  B^\tau_t \Bigl|  \int_{t_{i-1}}^{t_i} B^\tau_s\,ds\Bigr]\Bigr)_{t\in [t_{i-1},t_i]} \\
	&  \stackrel{d}{=}  \sqrt{t_i-t_{i-1}}   \Bigl( B^{\tau*}_{ \frac{t-t_{i-1} } { t_i-t_{i-1}}}- \EE\Bigl[ B^{\tau*}_{ \frac{t-t_{i-1} } { t_i-t_{i-1}}}\Big| \int_0^1 B^{\tau*}_s \, ds\Bigr] \Bigr)_{t\in [t_{i-1},t_i]} \\ & =  \sqrt{t_i-t_{i-1}} 	\Bigl(Z^{\tau*}_{ \frac{t-t_{i-1} } { t_i-t_{i-1}}}\Bigr)_{t\in [t_{i-1},t_i]}   \stackrel{d}{=}  \sqrt{t_i-t_{i-1}} 	\Bigl(\widetilde Z^{\tau*}_{ \frac{t-t_{i-1} } { t_i-t_{i-1}}}\Bigr)_{t\in [t_{i-1},t_i]} 
	\end{aligned}
\end{equation}
Using~\eqref{uv111a} to \eqref{uv111c}, \eqref{uv01} and~\eqref{uv2}, we may thus conclude that
\begin{equation}\label{uv111d}
	\begin{aligned}
	&	(W_t-W_{t_{i-1}}, \Wtpi_t-\Wtpi_{t_{i-1}})_{t\in [t_{i-1},t_i]} 	\\
	&\qquad = 	\Bigl( \overline W^{\tau}_t-W_{ t_{i-1} } +    B^\tau_t ,   \overline W^{\tau}_t-W_{ t_{i-1} } +   \EE\Bigl[  B^\tau_t \Bigl |  \int_{t_{i-1}}^{t_i} B^\tau_s\,ds\Bigr] +  \Ztpi_t      \Bigr)_{t\in [t_{i-1},t_i]} \\
		& \qquad  \stackrel{d}{=}  \sqrt{t_i-t_{i-1}} 	\Bigl(\overline W^{\tau*}_{\frac{t-t_{i-1}}{t_i-t_{i-1}}}  +B^{\tau*}_{ \frac{t-t_{i-1} } { t_i-t_{i-1}}}, \\
		 & \qquad\qquad\qquad\qquad \qquad\qquad   \overline W^{\tau*}_{\frac{t-t_{i-1}}{t_i-t_{i-1}}} +\EE\Bigl[ B^{\tau*}_{ \frac{t-t_{i-1} } { t_i-t_{i-1}}}\Big| \int_0^1 B^{\tau*}_s \, ds\Bigr] + \widetilde Z^{\tau*}_{ \frac{t-t_{i-1} } { t_i-t_{i-1}}} \Bigr)_{t\in [t_{i-1},t_i]} \\
	&\qquad = 	\sqrt{t_i-t_{i-1}} 	\Bigl( W_{\frac{t-t_{i-1}}{t_i-t_{i-1}}},  \widetilde W^{\tau*}_{\frac{t-t_{i-1}}{t_i-t_{i-1}}}  \Bigr)_{t\in [t_{i-1},t_i]}.
	\end{aligned}
\end{equation}
By \eqref{uv111d} we obtain 
\begin{equation}\label{uv111e}
	\begin{aligned}
& \EE\Bigl[\int_{0}^{2\pi} \Bigl| \int_{t_{i-1}}^{t_i} \bigl (f (x +W_t - W_{t_{i-1} } ) -  f (x +\tWp_t - \tWp_{t_{i-1}})  \bigr)\, dt \Bigr|^2dx\Big]\\
& \qquad  = (t_i-t_{i-1})^2	\EE\Bigl[\int_{0}^{2\pi} \Bigl| \int_0^1 	\bigl (f (x +	\sqrt{t_i-t_{i-1}} \, W_t  ) - 
 f (x +\sqrt{t_i-t_{i-1}} \, \widetilde W^{\tau*}_t)  \bigr)\, dt \Bigr|^2dx\Big].
	\end{aligned}
\end{equation}

Define $g\colon \Omega\times \R\to \R$ by
\[
g(\omega,x) = \int_{0}^1 \bigl (f (x +\sqrt{t_i-t_{i-1}} \, W_t(\omega) ) -  f (x +\sqrt{t_i-t_{i-1}} \, \widetilde W^{\tau*}_t(\omega))  \bigr)\, dt
\]
for $(\omega,x)\in \Omega\times \R$.	
Since $f$ is measurable and bounded we obtain that for every $\omega\in\Omega$, the function $g(\omega,\cdot)\colon \R\to\R$ is measurable and bounded as well.
 Moreover,
  using the $2\pi$-periodicity of $f$, it is easy to see that for every $\omega\in\Omega$ and every $j\in\Z$, the $j$-th Fourier coefficient $\widehat g_j(\omega)$ of $g(\omega,\cdot) $ satisfies
\begin{equation}\label{cvbNEW}
\widehat g_j(\omega) = \fh_j \int_0^1 \bigl (\exp(-\bi j \sqrt{t_i-t_{i-1}} \, W_t(\omega) ) - \exp(-\bi j \sqrt{t_i-t_{i-1}} \, \widetilde W^{\tau*}_t(\omega)) \bigr) \, dt.
\end{equation}
By Parseval's theorem we obtain for every $\omega\in \Omega$, 
	\begin{equation}\label{vc23NEW}
		\begin{aligned}
			&\int_{0}^{2\pi}\Bigl[\Bigl| \int_0^1 \bigl (f (x +\sqrt{t_i-t_{i-1}} \, W_t(\omega) ) -  f (x +\sqrt{t_i-t_{i-1}} \, \widetilde W^{\tau*}_t(\omega))  \bigr)\, dt \Bigr|^2\Big] dx  \\
			& \qquad \qquad =  \sum_{j \in \Z } |\widehat g_j(\omega)|^2.
		\end{aligned}
	\end{equation}
Hence,
	\begin{equation}\label{uv111f}
	\begin{aligned}
& \EE\Bigl[\int_{0}^{2\pi} \Bigl| \int_0^1 	\bigl (f (x +	\sqrt{t_i-t_{i-1}} \, W_t  ) -  f (x +\sqrt{t_i-t_{i-1}} \, \widetilde W^{\tau*}_t)  \bigr)\, dt \Bigr|^2dx\Big] \\
&  \qquad =  \sum_{j \in \Z } |\widehat f_j|^2  \EE\Bigl[ \Bigl| \int_0^1 \bigl (\exp(-\bi j \sqrt{t_i-t_{i-1}} \, W_t ) -
 \exp(-\bi j \sqrt{t_i-t_{i-1}} \, \widetilde W^{\tau*}_t) \bigr) \, dt\Bigr|^2\Bigr].
		\end{aligned}
\end{equation}
Now, combine~\eqref{rsv1NEW}, \eqref{uv111e} and \eqref{uv111f} to complete the proof of the lemma.
\end{proof}

Next, we show that the function $D$ is bounded away from zero on the interval $[1,2]$.

\begin{lemma}\label{NEWxy} We have
	\[
	\inf_{x\in[1,2]} D(x) > 0.
	\]
\end{lemma}

\begin{proof}
	Clearly, $|D(x)| \le 4$ for all $x\in \R$. Hence, for all $x,y\in\R$,
	\begin{equation*}
		\begin{aligned}
& |D(x)-D(y)| \\	& \qquad \le 4 |\sqrt{D(x)} - \sqrt{D(y)}|  \\
& \qquad \le 4  \EE \Bigl[ \Bigl| \int_0^1 \Bigl( \bigl( \exp(-\bi x W_t) - \exp(-\bi x \Wtt_t)\bigr) - \bigl( \exp(-\bi y W_t) - \exp(-\bi y \Wtt_t)\bigr)\Bigr)\, dt \Bigr|^2 \Bigr]^{1/2}\\
& \qquad \le 4  \EE \Bigl[ \Bigl| \int_0^1 2(|W_t|+|\widetilde W^{\tau*}_t|)|x-y|  \, dt  \Bigr|^2 \Bigr]^{1/2}\\
& \qquad \le 16|x-y| \EE \Bigl[ \Bigl| \int_0^1 |W_t| \, dt  \Bigr|^2 \Bigr]^{1/2},
		\end{aligned}
	\end{equation*}
wich shows that the function $D\colon \R\to \R$ is continuous.

It remains to show that $D(x)>0$ for all $x\in [1,2]$. For  $\eps\in (0,\infty)$ we  put
\[
B_\eps = \bigl\{f\in C([0,1];\R) \,| \, \|f\|_\infty < \eps \bigr\}
\]
and we note that 
\begin{equation}\label{gauss1}
	\PP(\Wc^{\tau*}\in B_\eps) >0
	\end{equation}
since $\Wc^{\tau*}$ is a  centered Gau{\ss}ian process with continuous paths, see e.g. the proof of Lemma 5.1 in~\cite{VZ2008}.

Assume that there exists $x\in (0,\infty)$ with $D(x) = 0$ and let $\eps \in (0,\infty)$. Then, by the independence of the processes $\Wc^{\tau*}$, $Z^{\tau*}$ and  $\widetilde Z^{\tau*}$, see\eqref{uv2} and \eqref{uv4}, we obtain
\begin{align*}
0 & =  \EE\Bigl[ \Bigl| \int_0^1 \bigl( \exp(-\bi x W_t) - \exp(-\bi x \Wtt_t)\bigr)\, dt \Bigr|^2 \Bigr]\\
& = \EE\Bigl[ \Bigl| \int_0^1 \bigl( \exp(-\bi x(\Wc^{\tau*}_t +  Z^{\tau*}_t)) - \exp(-\bi x (\Wc^{\tau*}_t +  \widetilde Z^{\tau*}_t))\bigr)\, dt \Bigr|^2 \Bigr]\\
& \ge  \int_{B_\eps} \EE\Bigl[ \Bigl| \int_0^1 \bigl( \exp(-\bi x(h(t) +  Z^{\tau*}_t)) - \exp(-\bi x (h(t) +  \widetilde Z^{\tau*}_t))\bigr)\, dt \Bigr|^2 \Bigr]\, \PP^{\Wc^{\tau*}}(dh),
\end{align*} 
which jointly with~\eqref{gauss1} yields the existence of $h_\eps\in B_\eps$ such that, almost surely,
\[
\int_0^1  \exp(-\bi x(h_\eps(t) +  Z^{\tau*}_t))\, dt  = \int_0^1  \exp(-\bi x (h_\eps(t) +  \widetilde Z^{\tau*}_t))\, dt.
\]
Since the latter two integrals are independent, we conclude that there exists $c\in\C$ such that, almost surely, 
\[
\int_0^1  \exp(-\bi x(h_\eps(t) +  Z^{\tau*}_t))\, dt = c,	
	\]
which implies that there exists $c\in\R$ such that, almost surely, 	
\begin{equation}\label{kk1}
	\int_0^1  \cos(x(h_\eps(t) +  Z^{\tau*}_t))\, dt = c.
\end{equation}

For all $0 < u < v < \infty$ and $\delta\in (0,1/2) $ we put
\[
A_\delta (u,v) =  \bigl\{f\in C([0,1];\R) \,| \, \forall t\in [\delta,1-\delta]\colon u \le f(t) \le v \bigr\}
\]
and we note that 
\begin{equation}\label{gauss2}
	\PP(Z^{\tau*}\in -A_\delta (u,v) )=   	\PP(Z^{\tau*}\in A_\delta (u,v) ) >0,
\end{equation}	
see Lemma~\ref{auxNEW0}.

Let 
\[
\eps= \pi/(16x),\, u_1 = 5\pi/(16x),\,  v_1 = 7\pi/(16x),\, u_2 = \pi/(32x),\, v_2 = 5\pi/(48x). 
\]
For all $h\in B_\eps$, every $\delta\in (0,1/2)$ and all $f_1\in A_\delta(u_1,v_1)$,  $f_2\in -A_\delta(u_2,v_2)$, we have, for every $t\in [\delta,1-\delta]$,
\begin{align*}
x(h(t) + f_1(t)) & \le x(\eps + v_1)= \pi/16 + 7\pi/16 = \pi/2,\\
x(h(t) + f_1 (t))  & \ge   x(-\eps + u_1) = -\pi/16 + 5\pi/16 = \pi/4,\\
x(h(t) + f_2(t)) & \le x(\eps - u_2)= \pi/16 - \pi/32 = \pi/32,\\
x(h(t) + f_2 (t))  & \ge   x(-\eps -v_2) = -\pi/16 - 5\pi/48 = -\pi/6,\\
\end{align*}
which implies that, for every $t\in [\delta,1-\delta]$,
\[
0 \le \cos(x(h(t) + f_1(t))) \le \cos(\pi/4) < \cos(\pi/6) \le  \cos(x(h(t) + f_2(t))).
\]
Hence, for  all $h \in B_\eps$ and every $\delta\in (0,1/2)$ and all $f_1\in A_\delta(u_1,v_1)$,  $f_2\in -A_\delta(u_2,v_2)$,
\begin{align*}
	\int_0^1  \cos(x(h(t) +  f_1(t)))\, dt &  \le 2\delta + 	\int_\delta^{1-\delta} \cos(x(h(t) +  f_1(t)))\, dt \le 2\delta + (1-2\delta)\cos(\pi/4),\\
		\int_0^1  \cos(x(h(t) +  f_2(t)))\, dt &  \ge -2\delta +	\int_\delta^{1-\delta} \cos(x(h(t) +  f_2(t)))\, dt \ge -2\delta + (1-2\delta)\cos(\pi/6).
\end{align*}
Since $\lim_{\delta\to 0} (2\delta + (1-2\delta)\cos(\pi/4)) = \cos(\pi/4) $ and  $\lim_{\delta\to 0} (-2\delta + (1-2\delta)\cos(\pi/6)) = \cos(\pi/6)$, there exists $\delta^*\in (0,1)$ and  $\cos(\pi/4) < \beta_1 < \beta_2 < \cos(\pi/6)$  
such that for all $h\in B_\eps$ and all $f_1\in A_{\delta^*}(u_1,v_1)$,  $f_2\in -A_{\delta^*}(u_2,v_2)$,
\begin{equation}\label{kk34}
	\int_0^1  \cos(x(h(t) +  f_1(t)))\, dt \le \beta_1 < \beta_2 \le 	\int_0^1  \cos(x(h(t) +  f_2(t)))\, dt. 
\end{equation}
Employing~\eqref{gauss2}, we conclude from~\eqref{kk34} that 
\[
\PP\Bigl(	\int_0^1  \cos(x(h_\eps(t) +  Z^{\tau*}_t))\, dt \le \beta_1 \Bigr) > 0 \text{ and } \PP\Bigl(	\int_0^1  \cos(x(h_\eps(t) +  Z^{\tau*}_t))\, dt \ge \beta_2 \Bigr) > 0, 
\]
in contradiction to~\eqref{kk1}, which completes the proof of the lemma.
	\end{proof}

\section{Proof of Theorem~\ref{thm2a}}\label{proofthm2}
We first state  properties of the Weierstrass function $\mu_\alpha$ that are crucial for the proof of Theorem~\ref{thm2a}. 

\begin{lemma}\label{mu-alpha}
	Let $\alpha\in (0,1)$. The function $\mu_\alpha$ is bounded, $2\pi$-periodic with $\int_0^{2\pi} \mu_\alpha(x)\, dx =0$ and satisfies $\mu_\alpha \in C^\alpha(\R)$.
\end{lemma}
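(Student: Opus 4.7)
The plan is to prove all four assertions using the absolute and uniform convergence of the defining series. Writing $\mu_\alpha(x) = \sum_{j=1}^\infty 2^{-\alpha j} \sin(2^j x)$, since $|2^{-\alpha j}\sin(2^j x)| \le 2^{-\alpha j}$ and $\sum_{j=1}^\infty 2^{-\alpha j}$ is a convergent geometric series, the Weierstrass $M$-test gives uniform convergence on $\R$, and in particular $\|\mu_\alpha\|_\infty \le 2^{-\alpha}/(1-2^{-\alpha}) < \infty$, which settles boundedness. Each summand $\sin(2^j x)$ satisfies $\sin(2^j(x+2\pi)) = \sin(2^j x + 2\pi\cdot 2^j) = \sin(2^j x)$ since $2^j \in \Z$, so $\mu_\alpha$ is $2\pi$-periodic as a uniform limit of $2\pi$-periodic functions. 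The same uniform convergence allows termwise integration on $[0,2\pi]$, and since $\int_0^{2\pi}\sin(2^j x)\,dx = 0$ for every $j\ge 1$, we obtain $\int_0^{2\pi}\mu_\alpha(x)\,dx = 0$.

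For the $\alpha$-H\"older property I would use the classical dyadic splitting argument. Fix $x,y \in \R$ with $0 < |x-y| \le 1$ and let $N \in \N$ be the unique integer with $2^{-(N+1)} < |x-y| \le 2^{-N}$. Split
\[
\mu_\alpha(x) - \mu_\alpha(y) = \sum_{j=1}^{N} 2^{-\alpha j}\bigl(\sin(2^j x) - \sin(2^j y)\bigr) + \sum_{j=N+1}^{\infty} 2^{-\alpha j}\bigl(\sin(2^j x) - \sin(2^j y)\bigr).
\]
For the first (low-frequency) sum apply the Lipschitz bound $|\sin a - \sin b| \le |a-b|$, yielding a bound by $|x-y|\sum_{j=1}^{N} 2^{(1-\alpha)j} \le c_1\, |x-y|\cdot 2^{(1-\alpha)N}$, and since $2^N \le 2/|x-y|$ this is at most $c_2\,|x-y|^\alpha$. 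For the second (high-frequency) sum use the trivial bound $|\sin a - \sin b| \le 2$ to obtain a bound by $2\sum_{j=N+1}^\infty 2^{-\alpha j} \le c_3\, 2^{-\alpha N} \le c_4\,|x-y|^\alpha$. Adding the two contributions gives $|\mu_\alpha(x)-\mu_\alpha(y)| \le c\,|x-y|^\alpha$ for all $x,y$ with $|x-y|\le 1$. For $|x-y| > 1$ the estimate is immediate from the boundedness established above, and combining the two ranges yields $\mu_\alpha \in C^\alpha(\R)$.

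There is no serious obstacle here: all steps are standard, and the only minor subtlety is choosing the cutoff $N$ to balance the two geometric sums, which works precisely because $\alpha \in (0,1)$ makes $2^{1-\alpha} > 1$ (so the low-frequency sum is dominated by its last term) while $2^{-\alpha} < 1$ (so the high-frequency tail converges geometrically).
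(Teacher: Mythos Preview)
Your proof is correct and follows essentially the same approach as the paper. The paper dispatches boundedness, periodicity, and the vanishing integral as ``straightforward'' and cites Zygmund for the H\"older property; your dyadic splitting argument for the latter is exactly the standard proof behind that citation, and indeed the paper writes out the identical argument (with cutoff $j_h = \lceil \log_2(1/|h|)\rceil$) when proving the analogous statement for $f_{\alpha,\beta}$ in Lemma~\ref{lemSob1}.
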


\begin{proof} 
	It is straightforward to check that $\mu_\alpha$ is bounded, $2\pi$-periodic and $\int_0^{2\pi} \mu_\alpha(x)\, dx =0$. For the proof of $\mu_\alpha \in C^\alpha(\R)$ see e.g. ~\cite[Theorem 4.9 in Chapter II]{Zyg2002}.
\end{proof}	

Next, we provide a lower bound for the term $\EE\Bigl[ \Bigl| \int_{t_{i-1}}^{t_i} \bigl (\mu (X_{t_{i-1} } +W_t - W_{t_{i-1} } ) -  \mu (X_{t_{i-1}} +\tWp_t - \tWp_{t_{i-1}})  \bigr)\,
dt\Bigr|^2\Bigr]$ in~ \eqref{iter2a} with $\mu=\mu_\alpha$.

\begin{lemma}\label{lemf5NEW}
	Let $\alpha \in (0,1)$ and let $\mu = \mu_\alpha$. 	Then there exists $c\in (0,\infty)$ such that for all $n\in\N$, all $\tau=\{t_1,\dots,t_{5n}\}\in\tPi_n$ with $0<t_1<\dots <t_{5n}=1$ and all $ i\in \{1,\dots,5n\}$ with $t_{i-1} \ge 1/2$,
	\[
		\EE\Bigl[\Bigl| \int_{t_{i-1}}^{t_i} \bigl (\mu_\alpha (X_{t_{i-1} } +W_t - W_{t_{i-1} } ) -  \mu_\alpha (X_{t_{i-1}} +\tWp_t - \tWp_{t_{i-1}})  \bigr)\, dt \Bigr|^2\Big] \ge c\, (t_i - t_{i-1})^{2+\alpha}.
	\]
\end{lemma}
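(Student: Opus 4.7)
The plan is to apply Lemma \ref{NEWx} to $f = \mu_\alpha$, which is permissible since by Lemma \ref{mu-alpha} the function $\mu_\alpha$ is bounded, measurable, and $2\pi$-periodic. Writing $h = t_i - t_{i-1}$, Lemma \ref{NEWx} bounds the left-hand side of the desired inequality from below by $c\, h^2 \sum_{j \in \Z} |\widehat{(\mu_\alpha)}_j|^2 A(h j^2)$, so it suffices to show that this sum is at least $c' h^\alpha$ for some $c' > 0$. Expanding $\sin(2^k x) = (e^{\bi 2^k x} - e^{-\bi 2^k x})/(2\bi)$ and using the orthogonality of $\{e^{\bi j x}\}_{j \in \Z}$ on $[0, 2\pi]$, a direct computation shows that the spectrum of $\mu_\alpha$ is lacunary: $|\widehat{(\mu_\alpha)}_j|^2 = (\pi/2) \cdot 2^{-2\alpha k}$ if $j = \pm 2^k$ for some integer $k \geq 1$, and $0$ otherwise. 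Thus the sum collapses to $\pi \sum_{k=1}^\infty 4^{-\alpha k} A(h \cdot 4^k)$.

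To extract the $h^\alpha$ decay, I would retain only a single well-chosen term. The function $A$ is continuous on $[0, \infty)$ by dominated convergence, and strictly positive on $(0, \infty)$ because its integrand is positive on the open triangle $\{(t, u) : 0 < t < u < 1\}$; hence $a := \min_{x \in [1, 4]} A(x) > 0$. Since $h \leq 1/(4n) \leq 1/4$ by \eqref{ndisc1}, the integer $k^* := \lceil \log_4(1/h) \rceil$ satisfies $k^* \geq 1$ and $h \cdot 4^{k^*} \in [1, 4)$, so $A(h \cdot 4^{k^*}) \geq a$, while $4^{k^*} < 4/h$ gives $4^{-\alpha k^*} \geq (h/4)^\alpha$. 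Keeping only the $k = k^*$ term in the non-negative sum yields
\[
\pi \sum_{k=1}^\infty 4^{-\alpha k} A(h \cdot 4^k) \geq \pi \cdot (h/4)^\alpha \cdot a,
\]
which when multiplied by $c h^2$ gives the claimed lower bound of order $h^{2 + \alpha}$.

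The main conceptual point is the selection of the correct scale $k^* \approx \tfrac{1}{2} \log_2(1/h)$: this is the unique frequency at which $h \cdot (2^{k^*})^2$ is of order $1$, so $A$ evaluated there is bounded below by an absolute constant, while the Weierstrass mass $2^{-2\alpha k^*}$ at this frequency is precisely of order $h^\alpha$. The only mildly non-routine verification is the positivity of $A$ on the compact interval $[1, 4]$, which follows routinely from the strict positivity of its integrand together with continuity.
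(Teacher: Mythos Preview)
Your proof is correct and follows essentially the same approach as the paper: apply Lemma~\ref{NEWx}, identify the lacunary Fourier spectrum of $\mu_\alpha$, and retain the single term at scale $k^\ast$ with $h\cdot 4^{k^\ast}\in[1,4)$ (the paper writes this as $j^\ast=\lceil -\log_2\sqrt{t_i-t_{i-1}}\rceil$, which is the same index). The only cosmetic difference is that the paper lower-bounds $A$ on $[1,4]$ by an explicit monotonicity argument on the integrand, arriving at the concrete constant $\int_0^1\int_t^1 e^{-2(u-t)}(1-e^{-t(1-u)})\,du\,dt$, whereas you invoke continuity and compactness to get $a=\min_{[1,4]}A>0$; both are routine and yield the same conclusion.
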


\begin{proof}
	Let $n\in\N$, let $\tau=\{t_1,\dots,t_{5n}\}\in\tPi_n$ with $0<t_1<\dots <t_{5n}=1$ and let $i \in \{1, \dots 5n\}$ with $t_{i-1} \ge 1/2$. Throughout this proof,  $c\in (0,\infty)$ denotes a  positive constant,
	which neither depends on $n$ nor on $\tau$ nor on  $i$ and may change its value from line to line.	

By Lemma \ref{mu-alpha},	
the function $\mu_\alpha$ is measurable, bounded and $2\pi$-periodic.
	Moreover, by the fact that $\sin(z) = (\exp(\bi z) - \exp(-\bi z) )/(2\bi)$ for all $z\in \R$, we obtain that for all $x \in \R$,
	\begin{equation}\label{similar}
		\begin{aligned}
			\mu_\alpha(x) = \sum_{j = 1}^\infty 2^{-\alpha j} \frac{\exp(\bi 2^j x) - \exp(-\bi 2^j x)}{2 \bi} = \sum_{j\in \Z\setminus \{0\}} \frac{\sgn(j) 2^{- \alpha |j|} }{2 \bi}\exp\bigl(\bi \sgn(j) 2^{|j|}x\bigr).
		\end{aligned} 
	\end{equation}
We may thus apply Lemma \ref{NEWx} with 
$\mu  = \mu_\alpha$ and $f = \mu_\alpha$
to obtain
	\begin{align*}
& \EE\Bigl[\Bigl| \int_{t_{i-1}}^{t_i} \bigl (\mu_\alpha (X_{t_{i-1} } +W_t - W_{t_{i-1} } ) -  \mu_\alpha (X_{t_{i-1}} +\tWp_t - \tWp_{t_{i-1}})  \bigr)\, dt 
 \Bigr|^2\Big] \\ 
 & \qquad \qquad \qquad  \ge c (t_i - t_{i-1})^{2} \sum_{j \in \Z\setminus \{0\}} \frac{2^{-2\alpha |j| }}{4} D(\sgn (j)2^{|j|}\sqrt{t_i - t_{i-1}})  \\
 & \qquad \qquad \qquad  \ge c (t_i - t_{i-1})^{2}  2^{-2\alpha j^* }  D(2^{j^*}\sqrt{t_i - t_{i-1}} ), 
\end{align*}
where
$j^\ast  =  \lceil -\log_2(\sqrt{t_i - t_{i-1}}) \rceil$. Clearly, we have $1/\sqrt{t_i-t_{i-1}} \le 2^{j^*} \le 2/\sqrt{t_i-t_{i-1}}$, and therefore, by Lemma~\ref{NEWxy},
	\begin{equation}\label{similar2}
		 2^{- 2\alpha j^\ast}  D(2^{j^*}\sqrt{t_i - t_{i-1}} )  
	 \ge c(t_i - t_{i-1})^{\alpha} 	\inf_{x\in[1,2]} D(x)  \ge c (t_i - t_{i-1})^{\alpha}, 
\end{equation}		
which finishes the proof of the lemma.
\end{proof}

We are ready to proceed with the proof of Theorem~\ref{thm2a}. 

\begin{proof}[Proof of Theorem~\ref{thm2a}]	
	
Let $\alpha\in (0,1)$. By Lemma \ref{mu-alpha} we have that $\mu_\alpha$ is bounded, $2\pi$-periodic and satisfies $\mu_\alpha \in C^\alpha(\R)$. Moreover, since $\int_0^{2\pi}\mu_\alpha(x)\, dx=0$, we obtain
\[
\sup_{y\in\R}\Bigl|\int_0^y \mu_\alpha (z) \, dz \Bigr| \le 2\pi \|\mu_\alpha\|_\infty  < \infty.
\] 
We may thus apply Lemma \ref{lemf3NEW}, Lemma \ref{lemf4NEW} and Lemma \ref{lemf5NEW} to obtain that there exist  $c_1,c_2, c_3\in (0,\infty)$ 
such that 
 for all $n\in\N$, all $\tau=\{t_1,\dots,t_{5n}\}\in\tPi_n$ with $0<t_1<\dots <t_{5n}=1$ and
 all $ i\in \{1,\dots,5n\}$ with $t_{i-1}\ge 1/2$,

\begin{equation}\label{iter2NEW}
		\EE\bigl[|Y_{t_i} - \widetilde Y^\tau_{t_i}|^2\bigr]  \ge \left(1-\frac{c_1}{n}\right)\, \EE\bigl[|Y_{t_{i-1}} - \widetilde Y^\tau_{t_{i-1}}|^2\bigr]  + c_2\,(t_i-t_{i-1})^{2+\alpha} 
		- \frac{c_3}{n^{2+\alpha(1+\alpha)}}.  
\end{equation}

Let $n\in\N$ with $n>c_1$ 
and let $\tau=\{t_1,\dots,t_{5n}\}\in\tPi_n$ with $0<t_1<\dots <t_{5n}=1$. 	
Choose the unique  $r(\tau) \in \{1, \dots, 5n\}$ with $t_{r(\tau)} = \frac{1}{2}$. 
Iteratively applying \eqref{iter2NEW} for $i=5n, \ldots, r(\tau)+1$ we obtain
	\begin{align*}
		&\EE\bigl[|Y_{1} - \tYp_{1}|^2\bigr] \ge \left(1- \frac{c_1}{n}\right)^{5n-r(\tau)}
		 \EE\bigl[|Y_{t_{r(\tau)}} - \tYp_{t_{r(\tau)}}|^2\bigr] \\
		& \qquad\qquad  \qquad\qquad+ c_2 \sum_{i=r(\tau)+1}^n\left(1- \frac{c_1}{n}\right)^{n-i} (t_i - t_{i-1})^{2 + \alpha} -
	    (5n-r(\tau))\cdot \frac{c_3}{n^{2+\alpha(1+\alpha)}}
	\end{align*}		
and hence,
using \eqref{ndisc3},
\begin{equation}\label{thm2_2NEW}
	\begin{aligned}		
		\EE\bigl[|Y_{1} - \tYp_{1}|^2\bigr]&\ge c_2 \left(1 - \frac{c_1}{n}\right)^n  \sum_{i=r(\tau)+1}^n(t_i - t_{i-1})^{2 + \alpha} 
		 -5n\cdot \frac{c_3}{n^{2+\alpha(1+\alpha)}}
		 \\ & \ge c_2 \left(1 - \frac{c_1}{n}\right)^n \frac{1}{4^{2+\alpha} n^{1+\alpha}} 		 -5n\cdot \frac{c_3}{n^{2+\alpha(1+\alpha)}}.
	\end{aligned}
\end{equation}

Since $\lim_{n \rightarrow \infty} (1 - \frac{c_1}{n})^n = e^{-c_1}$, we obtain by~\eqref{thm2_2NEW} that there exist $c\in (0,\infty)$ and $n^\ast\in\N$ such that  for every $n\in\N$ with $n\ge n^\ast$ and all $\tau\in\tPi_n$,	
\begin{equation}\label{aux11NEW}
	\begin{aligned}
		\EE\bigl[|Y_{1} - \tYp_{1}|^2\bigr] \ge \frac{c}{n^{1+\alpha}}.
	\end{aligned}
\end{equation}
Moreover, by Lemma~\ref{lem1NEW} and Lemma~\ref{lem3NEWa} we obtain  that for every 
$p\in [1,\infty$) 
there exist $c_1,c_2\in (0,\infty)$ such that  for every $n\in\N$ and every $\tau\in \tPi_n$,
\begin{equation}\label{aux12NEW}
	\begin{aligned}
	\bigl(\EE\bigl[|Y_{1} - \tYp_{1}|^p\bigr] \bigr)^{1/p} \le   c_1 	\bigl(\EE\bigl[|X_{1} - \tXp_{1}|^p\bigr] \bigr)^{1/p} \le   \frac{c_2}{n^{(1+\alpha)/2}}.
\end{aligned}
\end{equation}
By \eqref{aux11NEW} and \eqref{aux12NEW} we may apply Lemma ~\ref{auxNEW}
in the appendix
  with  $Z=Y_{1} - \tYp_{1}$, $p=1$ and $r_1=r_2 =n^{-(1+\alpha)/2}$ and any $q\in (1,\infty)$ to obtain that   there exist $c\in (0,\infty)$ and $n^\ast\in\N$ such that  for every $n\in\N$ with $n\ge n^\ast$ and every $\tau\in \tPi_n$,
\begin{equation} \label{thm2_3NEW}
	\begin{aligned}
	 \EE\bigl[|Y_{1} - \tYp_{1}|\bigr] \ge  \frac{c}{n^{(1 + \alpha)/2}}.
	\end{aligned}
\end{equation}

Using~\eqref{ndisc2} as well as Lemma~\ref{lemf1} and Lemma~\ref{lemf2} we obtain that there exists $c\in (0,\infty)$ such that for all $n\in\N$,
\begin{equation}\label{end1}
	\begin{aligned}
		\inf_{\tau\in\mathcal T_n} e_1(\tau) & \ge 	\inf_{\tau\in\mathcal T_{\max(n,n^\ast)}}e_1(\tau) \\
		& \ge 	\inf_{\tau\in\widetilde{\mathcal T}_{\max(n,n^\ast)}}e_1(\tau) \\
		&  \ge \frac{1}{2} \inf_{\tau\in\widetilde{ \mathcal T}_{\max(n,n^\ast)}} \EE\bigl[|X_1-\tX^{\tau}_1|\bigr] \\
		& \ge c \inf_{\tau\in\widetilde{\mathcal T}_{\max(n,n^\ast)}} \EE\bigl[|Y_1-\tY^{\tau}_1|\bigr].
	\end{aligned}	
\end{equation} 
 Combining \eqref{end1}  with ~\eqref{thm2_3NEW} completes the proof of Theorem ~\ref{thm2a}.

\end{proof}

\section{Proof of Theorem~\ref{thm3}}\label{proofthm3}

We first provide properties of the function $\mus$ that are crucial for the proof of Theorem~\ref{thm3}. 

\begin{lemma}\label{lemSob1}
	Let $\alpha \in (0,1)$. Then for all $\beta \in (0, \infty)$ we have
	\begin{itemize}
		\item [(i)] $\mus$ is bounded, 
		\item [(ii)] $\mus\in C^\alpha(\R)$,
		\item [(iii)] $\mus\in \cap_{q \geq 1} L^q(\R)$.
	\end{itemize} 	
	Moreover, for all  $p \in [1,2]$ and all $\beta \in (1/p, \infty)$ we have
	\begin{itemize}	
		\item[(iv)]  $\mus \in \cap_{q \geq p}\mathsf W^{s,q}(\R)$.
	\end{itemize}
\end{lemma}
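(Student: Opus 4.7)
The plan is to exploit the geometric decay of the factor $2^{-\alpha j}$ in $\mus$ together with the fact that the underlying Weierstrass-type series $g(x)=\sum_{j\ge1}j^{-\beta}2^{-\alpha j}\sin(2^j x)$ vanishes at the cut-off points $-2\pi$ and $4\pi$. Indeed, $\sin(2^{j+1}\pi)=\sin(2^{j+2}\pi)=0$ for every $j\ge 1$, so $g(-2\pi)=g(4\pi)=0$, and hence $\mus=g\cdot 1_{[-2\pi,4\pi]}$ inherits the global regularity of $g$ on the support and remains continuous across the boundary.

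For (i) the uniform bound $\|g\|_\infty\le\sum_{j\ge 1}j^{-\beta}2^{-\alpha j}<\infty$, valid by the geometric decay of $2^{-\alpha j}$, suffices. Part (iii) is then immediate because $\mus$ is bounded and compactly supported. For (ii) I would run the classical dyadic argument for the Weierstrass function applied to $g$: for $0<|x-y|<1$ and $N=\lfloor\log_2(1/|x-y|)\rfloor$, split the series at $j=N$, bounding low-frequency terms using $|\sin(2^jx)-\sin(2^j y)|\le 2^j|x-y|$ and high-frequency terms using $|\sin|\le 1$. Since $\alpha\in(0,1)$, each of the two resulting sums is geometric and dominated by its extreme term, giving $|g(x)-g(y)|\le C|x-y|^\alpha$. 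The cut-off is harmless at the boundary: for $x\in[-2\pi,4\pi]$ and $y<-2\pi$ one has $|x+2\pi|\le|x-y|$, so by $g(-2\pi)=0$ one gets $|\mus(x)-\mus(y)|=|g(x)-g(-2\pi)|\le C|x+2\pi|^\alpha\le C|x-y|^\alpha$, and analogously at $4\pi$.

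Part (iv) is the main point. Refining the above splitting by carrying the weight $j^{-\beta}$ through both geometric comparisons should yield the logarithmically sharpened H\"older estimate
\[
|g(x)-g(y)|\le C|x-y|^\alpha\bigl(\log(2/|x-y|)\bigr)^{-\beta},\qquad 0<|x-y|<1/2.
\]
With this in hand, I would split the Gagliardo seminorm $\int\!\!\int |\mus(x)-\mus(y)|^q |x-y|^{-1-\alpha q}\,dx\,dy$ into $\{|x-y|\ge 1/2\}$, handled by the boundedness of $\mus$ and its compact support (which makes the tail of $|x-y|^{-1-\alpha q}$ integrable), and $\{|x-y|<1/2\}$, where the refined bound and the substitution $u=\log(2/|x-y|)$ reduce matters to a constant multiple of $\int_{2\log 2}^\infty u^{-\beta q}\,du$, convergent precisely when $\beta q>1$. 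Since $q\ge p$ and $\beta>1/p$ give $\beta q\ge \beta p>1$, the seminorm is finite; combined with $\mus\in L^q(\R)$ from (iii), this yields $\mus\in W^{\alpha,q}(\R)$ for every such $q$.

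The main obstacle I anticipate is the clean bookkeeping of the logarithmic improvement in the refined H\"older bound for $g$, which requires showing that the slowly varying weight $j^{-\beta}$ survives the geometric-sum comparisons on both the low- and high-frequency sides; the boundary analysis at $\pm 2\pi,4\pi$ is slightly delicate but goes through thanks to the vanishing of $g$ at those points.
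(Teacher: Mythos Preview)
Your proposal is correct and, for part~(iv), takes a genuinely different route from the paper. For (i)--(iii) your argument coincides with the paper's. For (iv), the paper does \emph{not} establish the pointwise log-improved H\"older bound $|g(x)-g(y)|\le C|x-y|^\alpha(\log(2/|x-y|))^{-\beta}$. Instead it proves only the integrated version
\[
\int_0^{2\pi}|\fs(x+y)-\fs(x)|^2\,dx\le \frac{c\,|y|^{2\alpha}}{(-\log_2|y|)^{2\beta}}
\]
via Parseval's identity (the frequencies $2^j$ are orthogonal on $[0,2\pi]$), then uses H\"older's inequality---this is where the restriction $p\le 2$ enters---to control the $L^p$ Gagliardo integral near the diagonal. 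The paper first shows $\mus\in W^{\alpha,p}(\R)$ for the endpoint $p$ only, and afterwards bootstraps to $q\ge p$ by writing $|\mus(x)-\mus(y)|^q=|\mus(x)-\mus(y)|^p\cdot(|\mus(x)-\mus(y)|/|x-y|^\alpha)^{q-p}$ and invoking (ii). It also routes the boundary analysis through a reduction to $\nu=1_{[0,2\pi]}\fs$ and an extension lemma from the fractional Sobolev literature.

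Your approach is more elementary and more direct: the dyadic splitting with the $j^{-\beta}$ weight carried through (the low-frequency sum $\sum_{j\le N}j^{-\beta}2^{(1-\alpha)j}$ is eventually geometrically increasing, hence dominated by its last term; the high-frequency tail uses $j^{-\beta}\le(N+1)^{-\beta}$) gives the pointwise log-gain, and then a single computation handles all $q$ with $\beta q>1$ simultaneously, avoiding both Parseval and the $p\to q$ bootstrap. The boundary step works because $t\mapsto t^\alpha(\log(2/t))^{-\beta}$ is increasing on $(0,1/2)$, so $|g(x)-g(-2\pi)|\le C|x+2\pi|^\alpha(\log(2/|x+2\pi|))^{-\beta}\le C|x-y|^\alpha(\log(2/|x-y|))^{-\beta}$ whenever $y<-2\pi$ and $|x-y|<1/2$. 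What you lose is the clean algebraic structure of the Parseval computation; what you gain is that no Fourier analysis or external extension lemma is needed.
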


The proof of Lemma \ref{lemSob1} is shifted to the appendix. 

Next, we provide a lower bound for the term $\EE\bigl[ \bigl| \int_{t_{i-1}}^{t_i} \bigl (\mu (X_{t_{i-1} } +W_t - W_{t_{i-1} } ) -  \mu (X_{t_{i-1}} +\tWp_t - \tWp_{t_{i-1}})  \bigr)\,
dt\bigr|^2\bigr]$ in~ \eqref{iter2a} with $\mu=\mu_{\alpha,\beta}$.

\begin{lemma}\label{lemf6NEW}
	Let $\alpha \in (0,1), \beta \in (0, \infty)$ and let $\mu = \mus$. Then there exist $c_1, c_2, c_3\in (0,\infty)$ such that for all $n\in\N$, all $\tau=\{t_1,\dots,t_{5n}\}\in\tPi_n$ with $0<t_1<\dots <t_{5n}=1$ and all  $i\in \{1,\dots, 5n\}$ with $t_{i-1} \ge 1/2$,
	\begin{equation}\label{iter3}
		\begin{aligned}
			&\EE\Bigl[\Bigl| \int_{t_{i-1}}^{t_i} \bigl (\mus (X_{t_{i-1} } +W_t - W_{t_{i-1} } ) -  \mus (X_{t_{i-1}} +\tWp_t - \tWp_{t_{i-1}})  \bigr)\, dt \Bigr|^2\Big] \\
			& \qquad \qquad \ge  \frac{c_1}{(-\log_2(t_i - t_{i-1}) + 1)^{2\beta}}(t_i - t_{i-1})^{2+\alpha} - c_2 e^{-c_3 n}.
		\end{aligned}
	\end{equation}
\end{lemma}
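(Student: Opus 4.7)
The plan is to reduce the bound to Lemma \ref{NEWx} applied to the $2\pi$-periodic auxiliary function $\fs(x) = \sum_{j \ge 1} j^{-\beta} 2^{-\alpha j} \sin(2^j x)$, which satisfies $\mus = 1_{[-2\pi, 4\pi]} \cdot \fs$. The obstruction is that $\mus$ itself is not periodic, so I will pay a small error to replace $\mus$ by $\fs$ inside the relevant integrals; this replacement error is what produces the exponential term $c_2 e^{-c_3 n}$ in \eqref{iter3}.

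Mimicking the opening steps of the proof of Lemma \ref{NEWx}, I would use the Lebesgue density lower bound $\inf_{t \in [1/2, 1]} \inf_{x \in [0, 2\pi]} p_t(x) \ge c > 0$ from \cite[Theorem 1]{QZ02} together with the independence relation \eqref{qx2} to bound the left-hand side of \eqref{iter3} from below by $c\,\EE\bigl[\int_0^{2\pi} g_{\mus}(\omega, x)^2 \, dx\bigr]$, where for a bounded measurable $\nu\colon\R\to\R$ I set $g_\nu(\omega, x) = \int_{t_{i-1}}^{t_i}\bigl(\nu(x + W_t - W_{t_{i-1}}) - \nu(x + \tWp_t - \tWp_{t_{i-1}})\bigr) \, dt$. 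Next, I would introduce the event $C = \{\sup_{t \in [t_{i-1}, t_i]} |W_t - W_{t_{i-1}}| \le 2\pi\} \cap \{\sup_{t \in [t_{i-1}, t_i]} |\tWp_t - \tWp_{t_{i-1}}| \le 2\pi\}$: on $C$ and for $x \in [0, 2\pi]$, both arguments of $\mus$ appearing in $g_{\mus}$ lie in $[-2\pi, 4\pi]$, so $g_{\mus}(\omega, x) = g_{\fs}(\omega, x)$. Since $t_i - t_{i-1} \le 1/(4n)$ by \eqref{ndisc1}, the reflection principle combined with a Gaussian tail bound gives $\PP(C^c) \le c_1 e^{-c_2 n}$, which together with the trivial estimate $|g_{\fs}| \le 2(t_i - t_{i-1})\|\fs\|_\infty$ yields
\[
\EE\Bigl[\int_0^{2\pi} g_{\mus}^2 \, dx\Bigr] \ge \EE\Bigl[\int_0^{2\pi} g_{\fs}^2 \, dx\Bigr] - c_3 e^{-c_4 n}.
\]

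Finally, $\fs$ is $2\pi$-periodic and bounded, so Lemma \ref{NEWx} applies to it directly. Computing its Fourier coefficients exactly as in \eqref{similar} (they are nonzero only at the frequencies $\pm 2^j$, $j \ge 1$, with squared modulus proportional to $j^{-2\beta} 2^{-2\alpha j}$), the conclusion of Lemma \ref{NEWx} becomes $\EE\bigl[\int_0^{2\pi} g_{\fs}^2 \, dx\bigr] \ge c (t_i - t_{i-1})^2 \sum_{j \ge 1} j^{-2\beta} 2^{-2\alpha j} A\bigl((t_i - t_{i-1}) 2^{2j}\bigr)$. Retaining only the term $j^* = \lceil -\log_2(\sqrt{t_i - t_{i-1}}) \rceil$ and using $A((t_i - t_{i-1}) 2^{2j^*}) \ge c$ together with $2^{-2\alpha j^*} \ge c (t_i - t_{i-1})^\alpha$ exactly as in \eqref{similar2}, along with $j^* \le -\log_2(t_i - t_{i-1}) + 1$, I obtain the lower bound $c (t_i - t_{i-1})^{2+\alpha}/(-\log_2(t_i - t_{i-1})+1)^{2\beta}$, which combined with the previous estimates yields \eqref{iter3}.

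The main obstacle, and the only genuinely new element compared with the proof of Lemma \ref{lemf5NEW}, is the periodic-replacement step on the good event $C$; once this is in place, the rest follows the Fourier-theoretic machinery of Lemma \ref{NEWx} applied with the new weights $j^{-\beta} 2^{-\alpha j}$, and the additional $(-\log_2(t_i - t_{i-1})+1)^{-2\beta}$ factor is precisely the cost paid by these weights.
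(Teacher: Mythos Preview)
Your proposal is correct and follows essentially the same route as the paper: both reduce to the periodic function $\fs$ via the good event $C$ (the paper calls it $B$), bound $\PP(C^c)$ by a Gaussian tail estimate using \eqref{ndisc1}, and then extract the term $j^* = \lceil -\log_2\sqrt{t_i - t_{i-1}}\,\rceil$ from the Fourier sum exactly as in \eqref{similar2}. One small remark: what you actually need after the replacement step is a lower bound on $\EE\bigl[\int_0^{2\pi} g_{\fs}^2\,dx\bigr]$, whereas the \emph{statement} of Lemma~\ref{NEWx} bounds $\EE\bigl[|g_{\fs}(\,\cdot\,,X_{t_{i-1}})|^2\bigr]$; it is the Parseval computation \eqref{vc23NEW}--\eqref{bb33NEW} inside the proof of Lemma~\ref{NEWx} (which gives the $\int_0^{2\pi}$ quantity exactly) that you are really invoking---the paper has the same slight imprecision in \eqref{ccvv23}.
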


\begin{proof}
	Let $n\in\N$, let $\tau=\{t_1,\dots,t_{5n}\}\in\tPi_n$ with $0<t_1<\dots <t_{5n}=1$ and let $i\in \{1,\dots, 5n\}$ with $t_{i-1} \ge 1/2$.
	Throughout this proof, $c_1,c_2,\dots\in (0,\infty)$ denote positive constants, 
	which neither depend on $n$ nor on $ \tau$ nor on $i$.

	Since $\mu = \mus$ is bounded, see Lemma \ref{lemSob1}, we can use ~\cite[Theorem 1]{QZ02} to derive, similar to \eqref{rsv1NEW}, that
	\begin{equation}\label{rsv1_Sob}
		\begin{aligned}
			&	 \EE\Bigl[\Bigl| \int_{t_{i-1}}^{t_i} \bigl (\mus (X_{t_{i-1} } +W_t - W_{t_{i-1} } ) -  \mus(X_{t_{i-1}} + \tWp_t - \tWp_{t_{i-1}})  \bigr)\, dt \Bigr|^2\Big] \\
			& \qquad \qquad \ge c_1 \int_{0}^{2\pi} \EE\Bigl[\Bigl| \int_{t_{i-1}}^{t_i} \bigl (\mus (x +W_t - W_{t_{i-1} } ) -  \mus (x +\tWp_t - \tWp_{t_{i-1}})  \bigr)\, dt \Bigr|^2\Big] dx.
		\end{aligned}
	\end{equation}

Define $\fs\colon\R\to\R$ by
\begin{equation}\label{muSobx}
	\fs(x) = \sum_{j = 1}^\infty {j^{-\beta}}2^{-\alpha j}  \sin(2^j x), \qquad x \in \R,
\end{equation}
and note that $\mus = 1_{[-2\pi, 4\pi]} \fs$. Moreover, for all $x \in [0, 2\pi]$ and all $y,z\in [-2\pi,2\pi]$ we have
\begin{equation}\label{cvbn}
\mus(x+z) - \mus(x+y) = \fs(x+z) - \fs(x+y).
\end{equation}

Put  
	\[
B= \Bigl\{\,\sup_{t \in [t_{i-1}, t_i]} |W_t - W_{t_{i-1}}| \le 2\pi\Bigr\} \cap \Bigl\{\,\sup_{t \in [t_{i-1}, t_i]} |\tWp_t - \tWp_{t_{i-1}}| \le 2\pi\Bigr\}.
\]
Then, by \eqref{cvbn} and the boundedness of $\mus$,
	\begin{equation}\label{nccc3}
		\begin{aligned}
			& \int_{0}^{2\pi} \EE\Bigl[\Bigl| \int_{t_{i-1}}^{t_i} \bigl (\mus (x +W_t - W_{t_{i-1} } ) -  \mus (x +\tWp_t - \tWp_{t_{i-1}})  \bigr)\, dt \Bigr|^2\Big] dx
		 \\
			& \qquad \ge \int_{0}^{2\pi} \EE\Bigl[1_{B}\Bigl| \int_{t_{i-1}}^{t_i} \bigl (\mus (x +W_t - W_{t_{i-1} } ) -  \mus(x +\tWp_t - \tWp_{t_{i-1}})  \bigr)\, dt \Bigr|^2\Big] dx \\
			& \qquad  \ge \int_{0}^{2\pi} \EE\Bigl[\Bigl| \int_{t_{i-1}}^{t_i} \bigl (\fs (x +W_t - W_{t_{i-1} } ) -  \fs (x +\widetilde W_t - \widetilde W_{t_{i-1}})  \bigr)\, dt \Bigr|^2\Big] dx - c_2 \PP(B^c).
		\end{aligned}
	\end{equation}

	Using standard results for the Brownian motion 
	 and \eqref{ndisc1}   we get
	\begin{equation}\label{mmm3}
		\begin{aligned}
	\PP(B^c) & \le 2\PP\Bigl(\sup_{t \in [t_{i-1}, t_i]} |W_t - W_{t_{i-1}}| >2 \pi\Bigl) = 2\PP\Bigl( \sup_{t \in [0,1]} |W_t| > \frac{2\pi}{\sqrt{t_i - t_{i-1}}}\Bigl) \\
	& \le 8 \PP (W_1 > 4\pi \sqrt{n}) 
	\le c_3 e^{-c_4 n}.
	\end{aligned}
\end{equation}

Similar to~\eqref{similar} we have for every $x\in\R$,
\begin{equation}\label{represent}
\fs(x) = \sum_{j\in \Z\setminus \{0\}} \frac{\sgn(j) |j|^{-\beta} 2^{- \alpha |j|} }{ 2 \bi}\exp\bigl(\bi \sgn(j) 2^{|j|}x\bigr).
\end{equation}
Since $\mus$ and $\fs$ are measurable and  bounded and $\fs$ is $2\pi$-periodic  we may thus apply Lemma~\ref{NEWx} to obtain
\begin{equation}\label{ccvv23}
	\begin{aligned}
	&	\EE\Bigl[\Bigl| \int_{t_{i-1}}^{t_i} \bigl (\fs (X_{t_{i-1} } +W_t - W_{t_{i-1} } ) -  \fs (X_{t_{i-1}} +\tWp_t - \tWp_{t_{i-1}})  \bigr)\, dt \Bigr|^2\Big] \\
	& \qquad\qquad  \qquad\qquad \ge c_5 (t_i - t_{i-1})^{2} \sum_{j \in \Z} | \widehat{ ( \fs ) }_j|^2 D(j \sqrt{t_i - t_{i-1}})\\
	& \qquad\qquad  \qquad\qquad = c_5 (t_i - t_{i-1})^{2} \sum_{j \in \Z\setminus\{0\}}\frac{|j|^{-2\beta}2^{-2\alpha |j|}}{4} D(\sgn (j) 2^{|j|}\sqrt{t_i - t_{i-1}})\\
	& \qquad\qquad  \qquad\qquad \geq c_6 (t_i - t_{i-1})^{2} (j^*)^{-2\beta}2^{-2\alpha j^*}  D(2^{j^*}\sqrt{t_i - t_{i-1}})
\end{aligned}
\end{equation}
with $D\colon\R\to\R$ given by~\eqref{Axx} and $j^\ast  =  \lceil -\log_2(\sqrt{t_i - t_{i-1}}) \rceil$. We conclude as in \eqref{similar2} that
\begin{equation}\label{dft2}
			 (j^\ast)^{-2\beta}2^{- 2\alpha j^\ast} D(2^{j^*}\sqrt{t_i - t_{i-1}}) 
\ge \frac{c_7}{(-\log_2(t_i - t_{i-1}) + 1)^{2\beta}} (t_i - t_{i-1})^{\alpha}.
\end{equation}	

Combining ~\eqref{rsv1_Sob} with \eqref{nccc3}, \eqref{mmm3}, \eqref{ccvv23} and \eqref{dft2} completes the proof of the lemma.

\end{proof}

\begin{proof}[Proof of Theorem~\ref{thm3}]

	Let $\alpha\in (0,1)$ and let $\beta\in (0,\infty)$. By Lemma \ref{lemSob1} we have that $\mus$ is bounded and satisfies $\mus \in C^\alpha(\R)\cap L^1(\R)$. 
	We may thus use Lemma \ref{lemf3NEW}, Lemma \ref{lemf4NEW} and Lemma~\ref{lemf6NEW} as well as ~\eqref{ndisc1} and \eqref{ndisc3} to derive, similar to~\eqref{thm2_2NEW} and \eqref{aux11NEW} that there exist $n^\ast\in \N$ and  $c_1,\ldots ,c_6\in (0,\infty)$ such that for all $n\in\N$ with $n\ge n^\ast$ and all $\tau=\{t_1,\dots,t_{5n}\}\in\tPi_n$ with $0<t_1<\dots <t_{5n}=1$,

	\begin{equation}\label{iter2NEWx}
		\begin{aligned}
			\EE\bigl[|Y_{1} - \tYp_{1}|^2\bigr] & \ge c_2 \left(1 - \frac{c_1}{n}\right)^n  \sum_{i=r(\tau)+1}^n \frac{(t_i - t_{i-1})^{2 +\alpha}}{(-\log_2(t_i - t_{i-1}) + 1)^{2\beta}}
					 - 5n \cdot \frac{c_3}{n^{2 + \alpha(1+\alpha)}} \\
					&\ge  c_4 \left(1 - \frac{c_1}{n}\right)^n \cdot \frac{1}{(\log_2(4n) + 1)^{2\beta} n^{1 + \alpha}} 
					 - \frac{c_5}{n^{1 + \alpha(1+\alpha)}}\\
					 & \ge   \frac{c_6 }{(\ln(n + 1))^{2\beta} n^{1 + \alpha}},
			 \end{aligned}
			\end{equation}
		where $r(\tau)$ is the unique index in $\{1,\dots,5n\}$ such that $t_{r(\tau)}= 1/2$.
		
		Furthermore, by Lemma~\ref{lem1NEW} and Lemma~\ref{lem3NEWa} we derive, similar to~\eqref{aux12NEW}, that for every 
		$p\in [1,\infty)$ 
		there exists $c\in (0,\infty)$ such that  for every $n\in\N$ and every $\tau\in \tPi_n$,
		\begin{equation}\label{aux12NEWx}
			\begin{aligned}
				\bigl(\EE\bigl[|Y_{1} - \tYp_{1}|^p\bigr] \bigr)^{1/p} \le    \frac{c}{n^{(1+\alpha)/2}}.
			\end{aligned}
		\end{equation}
	
	By~\eqref{iter2NEWx} and ~\eqref{aux12NEWx} we may apply Lemma~\ref{auxNEW} to obtain that there exists $n^\ast\in \N$ and for every $p\in[1,2]$ and $q\in (1,\infty)$ there exists $c\in (0,\infty)$ such that for all $n\in\N$ with $n\ge n^\ast$ and all $\tau\in\tPi_n$,
				\begin{equation}\label{NEWxx}
	\bigl(	\EE\bigl[|Y_{1} - \tYp_{1}|^p\bigr]\bigr)^{1/p}  \ge	\frac{c}{(\ln(n + 1))^{\beta\cdot 2q/p} \, n^{(1 + \alpha)/2}}.	
	\end{equation}
		
	For $p\in [1,\infty)$ and $\varepsilon \in (0,\infty)$ let $p^\ast = \min(2,p)$, choose $\teps\in (0,\infty)$ such that 
	\[
2\teps /p^\ast+	2\teps /(p^\ast)^2 + 2\teps^2/p^\ast \le \eps
	\]
	and let $\beta = 1/p^\ast +\teps$ and $q= 1+\teps$.  Then $\beta\in (1/p^\ast,\infty)$ and by Lemma~\ref{lemSob1} we have $\mus\in \cap_{\tilde p \geq p^\ast}  \mathsf{W}^{\alpha,\tilde p}(\R)$. Since $\beta \cdot 2q/p^\ast = 2/(p^\ast)^2 + 2\teps /p^\ast+	2\teps /(p^\ast)^2 + 2\teps^2/p^\ast \le 2/(p^\ast)^2 + \eps$ we conclude by~\eqref{NEWxx}  that there exist $n^\ast\in \N$ and  $c\in (0,\infty)$ such that for all $n\in\N$ with $n\ge n^\ast$ and all $\tau\in\tPi_n$,
			\begin{equation}\label{NEWxxx}
		\bigl(	\EE\bigl[|Y_{1} - \tYp_{1}|^p\bigr]\bigr)^{1/p}	  \ge	\frac{c}{(\ln(n+1))^{2/(p^\ast)^2+\eps}\, n^{(1 + \alpha)/2}}.
	\end{equation}
  Next, use ~\eqref{ndisc2} as well as Lemma~\ref{lemf1} and Lemma~\ref{lemf2} to obtain that for every $p\in [1,\infty)$  there exists $c\in (0,\infty)$ such that for all $n\in\N$,
  \begin{equation}\label{end1x}
  	\begin{aligned}
  		\inf_{\tau\in\mathcal T_n} e_p(\tau) & \ge 	\inf_{\tau\in\mathcal T_{\max(n,n^\ast)}}e_p(\tau) \\
  		 &\ge 	\inf_{\tau\in\widetilde{\mathcal T}_{\max(n,n^\ast)}}e_p(\tau)\\
  		&  \ge \frac{1}{2} \inf_{\tau\in\widetilde{ \mathcal T}_{\max(n,n^\ast)}} \bigl( \EE\bigl[|X_1-\tX^{\tau}_1|^p\bigr] \bigr)^{1/p}\\
  		& \ge c \inf_{\tau\in\widetilde{ \mathcal T}_{\max(n,n^\ast)}} \bigl(\EE\bigl[|Y_1-\tY^{\tau}_1|^p\bigr]  \bigr)^{1/p}.
  	\end{aligned}	
  \end{equation} 
  Combining ~\eqref{NEWxxx} with \eqref{end1x} completes the proof of Theorem ~\ref{thm3}.

\end{proof}

\section*{APPENDIX}

\begin{lemma}\label{auxNEW0}
	Let $B=(B_t)_{t\in[0,1]}$ be a Brownian bridge and define a real-valued stochastic process $Z=(Z_t)_{t\in[0,1]}$ by
	\[
	Z_t = B_t - \EE\Bigl[B_t\,\Bigr|\, \int_0^1 B_s\, ds\Bigr],\quad t\in [0,1].
	\]
	Then, for every $\delta\in (0,1/2)$ and all $0 < u <v<\infty$, 
	\[
	\PP \bigl( \forall t\in [\delta,1-\delta]\colon Z_t\in [u,v]\bigr) >0.
	\]
\end{lemma}

\begin{proof}
Fix $\delta\in (0,1/2)$ and let 
\[
Z^\delta = (Z^\delta_t= Z_t)_{t\in[\delta,1-\delta]}
\]
denote the restriction of $Z$ to the interval $[\delta,1-\delta]$. Note that $Z^\delta$ is a continuous, real-valued centered Gau{\ss}ian process. Hence, for every $\eps\in (0,\infty)$, 
\begin{equation}\label{rkhs1}
	\PP(\|Z^\delta\|_\infty <\eps) >0,
\end{equation}
see e.g. the proof of Lemma 5.1 in \cite{VZ2008}. Let $H(Z^\delta)$ denote the Reproducing Kernel Hilbert Space (RKHS) associated with the process $Z^\delta$. Note that, for any $h\in H(Z^\delta)$, the distributions $\PP^{Z^\delta}$ and $\PP^{Z^\delta+h}$ on the Borel $\sigma$-field in $C([\delta,1-\delta]; \R)$ are equivalent, see e.g. \cite[Lemma 3.2]{VZ2008}. Below we show that  $H(Z^\delta)$ contains all constant functions. By the latter fact and \eqref{rkhs1} we may then conclude that for all $\eps\in (0,\infty)$ and all $c\in\R$ 
\begin{equation}\label{rkhs2}
\PP(\forall t\in [\delta,1-\delta]\colon Z^\delta_t\in (c-\eps,c+\eps))  = \PP(\|Z^\delta-c\|_\infty <\eps)  > 0,
\end{equation}
which yields the statement in the lemma.

It remains to prove that $1\in H(Z^\delta)$. Recall that the RKHS $(H(V),\langle \cdot, \cdot\rangle_{H(V)})$ of a real-valued centered Gau{\ss}ian process $V=(V_t)_{t\in T}$ on $(\Omega,\mathcal A,\PP)$, where $T$ is a non-empty set, consists of all functions
\begin{equation}\label{rkhs2a}
h^V_U\colon T\to \R,\quad t\mapsto \EE[UV_t],
\end{equation} 
with $U$ ranging over the first order chaos $\mathcal H_1(V)$ of $V$, i.e. the  closure of the set of all linear combinations $\sum_{j=1}^m \alpha_j V_{t_j} $ with $m\in\N$, $\alpha_1,\ldots,\alpha_m\in\R$, $t_1,\ldots,t_m\in T$, in the space $L^2(\Omega,\mathcal A ,\PP)$, and for all $U_1,U_2\in \mathcal H_{ 1}(V)$, 
\begin{equation}\label{rkhs2b}
	\langle h^V_{U_1},h^V_{U_2}\rangle_{H(V)} = \EE[U_1 U_2].
\end{equation}

Put
\[
J = \int_0^1 B_s\, ds.
\]
Clearly, $J\in \mathcal H_1(B)$ and, for all $t\in[0,1]$,
\[
h_J^B(t) = \int_0^1 \EE[B_tB_s]\, ds = \int_0^1 \min(s,t)(1-\max(s,t))\, ds = (1-t)t/2.
\]
Using \eqref{rkhs2a} and \eqref{rkhs2b} it is easy to see that 
\begin{equation}\label{rkhs3}
H(Z^\delta) = \{ h_{| [\delta,1-\delta] }  \,|\, h\in H(Z) \}
\end{equation} 
and 
\begin{equation}\label{rkhs4}
	H(Z) = \{ h^B_U \in H(B) \,|\, U\in \mathcal H_1(B)\text{ with } \EE(UJ)=0\} = \{h\in H(B)\,|\, 	\langle h,h^B_{J}\rangle_{H(B)} = 0 \}.
\end{equation}

It is well known that the RKHS associated with the Brownian bridge $B$ is given by
\begin{equation}\label{rkhs5}
	\begin{aligned}
		H(B)& = \bigl\{ f\in C([0,1];\R)\,|\, f\text{ is absolutely continuous with } \\
		& \qquad\qquad \qquad \qquad f(0)=f(1)=0 \text{ and } \int_0^1 (f'(x))^2\, dx < \infty\}
	\end{aligned}
\end{equation}
and, for all $f,g\in H(B)$,
\[
\langle f,g\rangle_{H(B)} = \int_0^1 f'(t)g'(t)\, dt.
\]
Hence, for all $h\in H(B)$, 
\[
\langle h,h^B_{J}\rangle_{H(B)} = 0 \Leftrightarrow  \int_0^1 h'(t) (1/2-t)\, dt = 0\Leftrightarrow  \int_0^1 h'(t) t\, dt = 0 \Leftrightarrow  \int_0^1 h(t) \, dt = 0,
\]
and using \eqref{rkhs3} and \eqref{rkhs4} we conclude that
\begin{align*}
H(Z^\delta) &= \Bigl\{h_{| [\delta,1-\delta] }  \,|\, h\in H(B) \text{ with }\int_0^1 h(t)\, dt = 0.\Bigr\}
\end{align*}

Let $\gamma \in (0,\infty) $ and define $h_\gamma\colon [0,1]\to\R$ by
\[
h_{ \gamma}(t) =\begin{cases}  -2\delta^{-1}\gamma t, &\text{ if }t\in [0,\delta/2],\\
	                             -\gamma + 2\delta^{-1}(1+\gamma)(t-\delta/2), &\text{ if }t\in [\delta/2,\delta],\\
	                             1, &\text{ if }t\in [\delta,1-\delta],\\
	                             1- 2\delta^{-1}(1+\gamma)(t-1+\delta), &\text{ if }t\in [1-\delta,1-\delta/2],\\
	                             -\gamma + 2\delta^{-1}\gamma( t - 1 + \delta/2), &\text{ if }t\in [1-\delta/2,1].
	                            \end{cases}
\]
Using \eqref{rkhs5} it is easy to check that $h_\gamma\in H(B)$ with $h_{| [\delta,1-\delta] } = 1$ and
\begin{align*}
\int_0^1 h_\gamma(t)\, dt & = 2\int_0^{\delta/2} h_\gamma(t)\, dt + 2\int_{\delta/2}^\delta h_\gamma(t)\, dt + \int_\delta^{1-\delta} h_\gamma(t)\, dt \\
& = -\delta\gamma/2   +2(- \delta\gamma/2  + \delta(1+\gamma)/4) + (1-2\delta)= 1-3\delta/2-\delta\gamma.
\end{align*}
Choosing $\gamma= (1-3\delta/2)/\delta$ yields $\int_0^1 h_\gamma(t)\, dt=0$, which completes the proof.
\end{proof}

\begin{lemma}\label{auxNEW}
Let $c_1,r_1,r_2\in (0,\infty)$ with $r_1\le r_2$  and let $Z$ be a real-valued random variable such that $(\EE[Z^2])^{1/2} \ge c_1 r_1$ and for every 
$p\in [1,\infty)$
there exists $c_2(p)\in (0,\infty)$ such that 	$(\EE[|Z|^p])^{1/p} \le c_2(p) r_2$.  Then, for every $p\in [1,2]$ and every $q\in (1,\infty) $,
\[
(\EE[|Z|^p])^{1/p} \ge \Bigl(\frac{c_1}{c_2(\gamma)}\Bigr)^{2q/p} c_2(\gamma)  \Bigl(\frac{r_1}{r_2}\Bigr)^{2q/p}r_2
\]
with $\gamma = (2 - p / q) \cdot  q / (q - 1)\in [1,\infty)$. 
	\end{lemma}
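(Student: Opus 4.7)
The plan is to obtain the desired lower bound on $(\EE[|Z|^p])^{1/p}$ by interpolating the $L^2$-norm of $Z$ between the $L^p$-norm and the $L^\gamma$-norm via the log-convexity of $L^r$-norms (a direct consequence of H\"older's inequality), and then to substitute the hypothesized lower bound on $\|Z\|_2$ and upper bound on $\|Z\|_\gamma$.

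First I would check that $\gamma$ is a valid exponent and that $p \le 2 \le \gamma$. A short algebraic manipulation gives
\[
\gamma - 2 = \frac{2-p}{q-1}, \qquad \gamma - p = \frac{q(2-p)}{q-1},
\]
so in particular $\gamma \ge 2 \ge p$ for all $p \in [1,2]$ and $q \in (1,\infty)$, which both places $2$ in the interval $[p,\gamma]$ (so interpolation is applicable) and ensures that the upper moment bound $(\EE[|Z|^\gamma])^{1/\gamma} \le c_2(\gamma) r_2$ is available from the hypothesis.

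Next I would apply the log-convexity of $L^r$-norms: for $\theta \in (0,1)$ satisfying $1/2 = \theta/p + (1-\theta)/\gamma$,
\[
\bigl(\EE[Z^2]\bigr)^{1/2} \le \bigl(\EE[|Z|^p]\bigr)^{\theta/p} \cdot \bigl(\EE[|Z|^\gamma]\bigr)^{(1-\theta)/\gamma}.
\]
Solving for $\theta$ and using the formulas for $\gamma - 2$ and $\gamma - p$ above, one computes
\[
\theta = \frac{p(\gamma - 2)}{2(\gamma - p)} = \frac{p}{2q},
\]
so that $1/\theta = 2q/p$ and $(1-\theta)/\theta = 2q/p - 1$. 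Rearranging the interpolation inequality gives
\[
\bigl(\EE[|Z|^p]\bigr)^{1/p} \ge \frac{(\EE[Z^2])^{1/(2\theta)}}{(\EE[|Z|^\gamma])^{(1-\theta)/(\gamma \theta)}} = \frac{\|Z\|_2^{\,2q/p}}{\|Z\|_\gamma^{\,2q/p - 1}}.
\]

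Finally, I would plug in $\|Z\|_2 \ge c_1 r_1$ and $\|Z\|_\gamma \le c_2(\gamma) r_2$ (valid since $r_1 \le r_2$ and $2q/p - 1 > 0$, so the upper bound in the denominator goes the right way), yielding
\[
\bigl(\EE[|Z|^p]\bigr)^{1/p} \ge \frac{(c_1 r_1)^{2q/p}}{(c_2(\gamma) r_2)^{2q/p - 1}} = \Bigl(\frac{c_1}{c_2(\gamma)}\Bigr)^{2q/p} c_2(\gamma) \Bigl(\frac{r_1}{r_2}\Bigr)^{2q/p} r_2,
\]
which is the desired estimate. The only nontrivial step is the identification $\theta = p/(2q)$, but this is just direct substitution of the explicit formula for $\gamma$, so I do not expect any genuine obstacle in the argument.
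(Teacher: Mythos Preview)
Your proof is correct and is essentially the same as the paper's: the paper writes $\EE[Z^2]=\EE[|Z|^{p/q}\cdot|Z|^{2-p/q}]$ and applies H\"older with exponents $q$ and $q/(q-1)$, which is precisely the log-convexity interpolation you invoke with $\theta=p/(2q)$. The only cosmetic difference is that you state the interpolation abstractly and then identify $\theta$, whereas the paper applies H\"older directly and reads off the exponents.
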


\begin{proof}
Let $p\in [1,2]$ 
and let	$q \in (1, \infty)$. By the H\"older inequality,
\begin{align*}
	c_1^2 r_1^2 &  \le \EE\bigl[Z^2\bigr] = \EE\bigl[|Z|^{p/q}\cdot |Z|^{2-p/q}\bigr] 
		\le \bigl(\EE\bigl[|Z|^p\bigr]\bigr)^{1/ q} \cdot \bigl(\EE\bigl[|Z|^\gamma \bigr]\bigr)^{(q - 1)/ q}\\
		&  \le \bigl(\EE\bigl[|Z|^p\bigr]\bigr)^{1/ q} \cdot  (c_2(\gamma))^{2-p/q} r_2^{2-p/q}
\end{align*}
and therefore
\[
\bigl(\EE\bigl[|Z|^p\bigr]\bigr)^{1/ q} \ge  \Bigl(\frac{c_1}{c_2(\gamma)}\Bigr)^2c_2(\gamma)^{p/q} \Bigl(\frac{r_1}{r_2}\Bigr)^{2} 
r_2^{p/q},
\]
which completes the proof of the lemma. 
\end{proof}

\begin{proof}[Proof of Lemma \ref{lemSob1}]
	Let 
	$\beta \in(0, \infty)$. 
	Recall the definition~\eqref{muSobx} of the function 
	$\fs$
	and 
	note
	that 
	\begin{equation}\label{l1}
		\mus=1_{[-2\pi, 4\pi]}\cdot \fs.
	\end{equation}
	
	Clearly, for all $x\in\R$,
	\begin{equation}\label{l4}
		|\mus(x)|\leq |\fs(x)|\leq \sum_{j = 1}^\infty 2^{-\alpha j} <\infty,
	\end{equation}
	which implies (i).
	
	We next prove (ii). First, we show that $\fs \in C^\alpha(\R)$. To this end, we
	proceed similarly to the proof of~\cite[Theorem 4.9 in Chapter II]{Zyg2002}.  
	For $h \in (-1,1)\setminus\{0\}$ put $j_h=\lceil \log_2(1/|h|) \rceil$. Then there exists $c\in(0,\infty)$ such that for all $x \in \mathbb{R}$ and all $h \in (-1,1)\setminus\{0\}$ we have
	\begin{equation}\label{lemSob1_1}
		\begin{aligned}
			|\fs(x + h) - \fs(x)| &= \Bigl|\sum_{j = 1}^\infty j^{-\beta} 2^{-\alpha j} \bigl(\sin(2^j (x + h)) - \sin(2^j x)\bigr)\Bigr| \\ 
			&\le \sum_{j = 1}^{j_h} 2^{-\alpha j} \bigl|\sin(2^j (x + h)) - \sin(2^j x)\bigr| + 2 \sum_{j = j_h + 1}^\infty 2^{-\alpha j} \\
			&\le \sum_{j = 1}^{j_h} 2^{(1 - \alpha)j} |h| + 2 \sum_{j = j_h + 1}^\infty 2^{-\alpha j} \\
			&= 2^{1 - \alpha}|h| \cdot \frac{2^{(1 - \alpha)j_h} - 1}{2^{1-\alpha} - 1} + 2\cdot \frac{2^{-\alpha(j_h + 1)}}{1 - 2^{-\alpha}} \\
			&\le 2|h|\cdot \frac{2^{(1 - \alpha)(\log_2(1/|h|)  + 1)} }{2^{1-\alpha} - 1} + \frac{2}{1 - 2^{-\alpha}} \cdot 
			2^{-\alpha(\log_2(1/|h|)  + 1)}\\
			& = |h|^\alpha\Bigl( \frac{2^{2 - \alpha}}{2^{1-\alpha} - 1} + \frac{2^{1-\alpha}}{1-2^{-\alpha}} \Bigr)\\
			& \le c |h|^\alpha.
		\end{aligned}
	\end{equation}
	Since $\fs$ is bounded, see ~\eqref{l4},  we may thus conclude that
	$\fs \in 	C^\alpha(\R)$.
	Observe that  $\fs(-2\pi)=\fs(4\pi)=0$ and thus $\mus$ is continuous. Hence, $\mus \in C^\alpha(\R)$ by the construction of $\mus$.

	Since  $\mus$ is continuous and has compact support we conclude that $\mus \in L^q(\R)$ for all $q\geq 1$, which shows (iii).

	Finally, we prove (iv). Let $p\in[1,2]$ and $\beta\in(1/p, \infty)$.  
	Below
	we show that
	\begin{equation}\label{l2}
		\mus \in\mathsf W^{\alpha,p}(\R).
	\end{equation}
	Using (ii) 
	and ~\eqref{l2}
	we obtain that  for all $q\geq p$ there exists $c\in(0, \infty)$ such that
	\begin{align*}
		\int_\R\int_\R \frac{|\mus(x)-\mus(y)|^q}{|x-y|^{1+\alpha q}}\, dx\, dy &=\int_\R\int_\R \frac{|\mus(x)-\mus(y)|^p}{|x-y|^{1+\alpha p}}\cdot \Bigl(\frac{|\mus(x)-\mus(y)|}{|x-y|^{\alpha}}\Bigr)^{q-p}\, dx\, dy\\
		&\leq c \int_\R\int_\R \frac{|\mus(x)-\mus(y)|^p}{|x-y|^{1+\alpha p}}\, dx\, dy
		< \infty,
	\end{align*}
	and hence   $\mus\in\mathsf W^{\alpha,q}(\R)$ for all $q\geq p$.

	For the proof of \eqref{l2}
	we consider the function
	\[
	\nu=1_{[0, 2\pi]}\cdot \fs.
	\]
	We show below that
	\begin{equation}\label{l3}
		I =\int_{-2\pi}^{4\pi}\int_{-2\pi}^{4\pi} \frac{|\nu(x) - \nu(y)|^p}{|x-y|^{1+\alpha p}} \, dy \, dx < \infty.
	\end{equation}
	Applying  
	\cite[Lemma 5.1]{DiNezzaPalatucciValdinoci2012} with $\Omega=(-2\pi, 4\pi)$, $u=\nu_{|\Omega}$ and $K=[0, 2\pi]$
	we conclude that $\nu \in\mathsf W^{\alpha,p}(\R)$. Hence, also $\nu(\cdot +2\pi), \nu(\cdot -2\pi)\in W^{\alpha,p}(\R)$.
	Finally, using the fact that $\fs(0)=\fs(2\pi)=0$ and the $2\pi$-periodicity of $\fs$ we obtain 
	\[
	\mus=1_{[-2\pi, 0]}\cdot \fs+1_{[0, 2\pi]}\cdot \fs+1_{[2\pi, 4\pi]}\cdot \fs=\nu(\cdot+2\pi)+\nu+ \nu(\cdot-2\pi),
	\]
	which yields \eqref{l2}.

	It remains to prove \eqref{l3}.
	Clearly,
	\[
	I=I_1+2I_2,
	\]
	where
	\[
	I_1=\int_{0}^{2\pi}\int_{0}^{2\pi} \frac{|\fs(x) - \fs(y)|^p}{|x-y|^{1+\alpha p}} \, dy \, dx, \quad I_2=\int_{0}^{2\pi}\int_{[-2\pi, 0] \cup [2\pi, 4\pi] } \frac{|\fs(x)|^p}{|x-y|^{1+\alpha p}} \, dy \, dx.
	\]
	
	Using the fact that $\fs(0)=\fs(2\pi)=0$ and 
	$\fs\in C^\alpha(\R)$
	  we obtain that there exists $c\in(0, \infty)$ such that for all $x \in [0, 2\pi]$,
	\[
	|\fs(x)|=|\fs(x) - \fs(0)| = |\fs(x) - \fs(2\pi)| \le c \min(x^{\alpha}, (2\pi - x)^{\alpha}).
	\]
	Hence, there exists $c\in(0, \infty)$ such that
	\begin{equation*}
		\begin{aligned}
			I_2&=\frac{1}{\alpha p}\int_0^{2\pi} |\fs(x)|^p\cdot \Bigl(\frac{1}{x^{\alpha p}}-\frac{1}{(x+2\pi)^{ \alpha p}} + \frac{1}{(2\pi - x)^{\alpha p}}-\frac{1}{(4\pi-x)^{\alpha p}}\Bigr) \, dx \\
			&\le c \int_0^{2\pi}\min(x^{\alpha p}, (2\pi - x)^{\alpha p})\cdot \Bigl(\frac{1}{x^{\alpha p}} + \frac{1}{(2\pi - x)^{\alpha p}}\Bigr) \, dx
			\le 4\pi c < \infty.
		\end{aligned}
	\end{equation*}

	For the proof of $I_1 < \infty$ observe that
	\begin{equation*}
		\begin{aligned}
			I_1 &= \int_0^{2\pi} \int_{-x}^{2\pi - x} \frac{|\fs(x + y) - \fs(x)|^p}{|y|^{1+\alpha p}} \, dy \, dx \\
			&\le \int_{-2\pi}^{2\pi} \frac{1}{|y|^{1+ \alpha p}} \int_0^{2\pi} |\fs(x + y) - \fs(x)|^p \, dx \, dy.
		\end{aligned}
	\end{equation*} 
	Below we show that there exist $\delta \in (0, 1)$ and $c \in (0, \infty)$ such that for all $y \in (-\delta, \delta)\setminus\{0\}$,
	\begin{equation}\label{lemSob1_6}
		\begin{aligned}
			\int_0^{2\pi} |\fs(x + y) - \fs(x)|^2 \, dx \le  \frac{c|y|^{2\alpha }}{(-\log_2(|y|))^{2\beta}}.
		\end{aligned}
	\end{equation}
	Using
	the latter estimate,
	 the boundedness of $\fs$
	and the H\"older inequality we 
	obtain
	that	there exist $\delta \in (0, 1)$ and $c_1, \ldots, c_4\in(0, \infty)$ such that
	\begin{equation*}
		\begin{aligned}
			I_1 &\le c_1\int_{\delta}^{2\pi} \frac{1}{|y|^{1+ \alpha p}} dy + \int_{-\delta}^{\delta} \frac{1}{|y|^{1+ \alpha p}} \int_0^{2\pi} |\fs(x + y) - \fs(x)|^p \, dx \, dy \\
			&\le c_2 + c_3\int_{-\delta}^{\delta} \frac{1}{|y|^{1+ \alpha p}} \cdot \Bigl(\int_0^{2\pi} |\fs(x + y) - \fs(x)|^2 \, dx\Bigr)^{p/2} \, dy \\
			&\le c_2 + c_4 \int_{-\delta}^{\delta} \frac{1}{|y| \cdot(-\log_2(|y|))^{\beta p}} \, dy.
		\end{aligned}
	\end{equation*}
	Since $\beta p > 1$ we conclude that $I_1<\infty$.

	Next, we derive \eqref{lemSob1_6}.
	Using the representation ~\eqref{represent} of $\fs$
	we obtain by
	the Parseval's identity 
	that for all $y\in(-2\pi, 2\pi)$,
	\begin{equation}\label{l9}
		\begin{aligned}
			&\int_0^{2\pi} |\fs(x + y) - \fs(x)|^2 \, dx \\
			&\qquad \qquad =  \int_0^{2\pi} \Bigl|\sum_{j \in\Z\setminus\{0\}} \sgn(j)|j|^{-\beta}2^{- \alpha |j|}\cdot \frac{\exp(\bi \sgn(j) 2^{|j|}y) - 1}{2 \bi} \cdot \exp(\bi \sgn(j) 2^{|j|}x)\Bigr|^2 \, dx  \\
			& \qquad \qquad =  		\frac{\pi}{2} 
			 \sum_{j \in\Z\setminus\{0\}} a_j(y),
		\end{aligned}
	\end{equation}
	where 
	\[
	a_j(y)=|j|^{-2\beta}2^{- 2\alpha |j|}\cdot |\exp(\bi \sgn(j) 2^{|j|}y) - 1|^2
	\]
	for $j \in\Z\setminus\{0\}$ and $y\in(-2\pi, 2\pi)$.
	Clearly, there exists $c\in(0, \infty)$ such that for all $y \in (-1,1)\setminus\{0\}$,
	\begin{equation}\label{lemSob1_3}
		\begin{aligned}
			\sum_{ |j| \ge -\log_2(|y|)} a_j(y)
			&  \le 8 \sum_{j = \lceil -\log_2(|y|)\rceil}^\infty j^{-2\beta}2^{- 2\alpha j} \\
			&\le \frac{8}{(-\log_2(|y|))^{2\beta}} \cdot |y|^{2\alpha } \cdot \sum_{j=0}^\infty 2^{-2\alpha j}=\frac{c|y|^{2\alpha}}{(-\log_2(|y|))^{2\beta}}.
		\end{aligned}
	\end{equation}
	Moreover, using the inequality
	\[
	|e^{ix}-1|\leq |x|, \quad x\in\R,
	\]
	and the fact that there exists $\kappa\in\N$ such that the function 
	\[
	(0, \infty) \ni x \mapsto x^{-2\beta} 2^{2(1-\alpha)x}\in\R
	\]
	is monotonically increasing on $[\kappa, \infty)$, we obtain that there exists $c\in(0, \infty)$ such that for all $y \in (-1,1)\setminus\{0\}$,
	\begin{equation}\label{lemSob1_4}
		\begin{aligned}
			\sum_{0 < |j| < -\log_2(|y|)} a_j(y) &\le 2 \sum_{j=1}^{\lceil -\log_2(|y|) \rceil} j^{-2\beta} 2^{-2\alpha j} 2^{2j}|y|^2 \\
			& \le 2|y|^2 \sum_{j=1}^{\kappa} j^{-2\beta} 2^{2(1-\alpha)j} + 2|y|^2 \int_1^{\lceil -\log_2(|y|) \rceil+1} x^{-2\beta} 2^{2(1-\alpha)x} \, dx\\
			& \le c|y|^2  + 2|y|^2 \int_1^{ -\log_2(|y|) +2} x^{-2\beta} 2^{2(1-\alpha)x} \, dx.
		\end{aligned}
	\end{equation}
	Clearly,
	\begin{equation}\label{l7}
		\lim_{y \rightarrow 0} |y|^2 \Big/ \frac{|y|^{2\alpha}}{(-\log_2(|y|))^{2\beta}}=\lim_{y \rightarrow 0} |y|^{2(1-\alpha)} (-\log_2(|y|))^{2\beta}=0.
	\end{equation}
	Furthermore, by the rule of L'H\^opital, 
	\begin{equation}\label{l8}
		\begin{aligned}
			&\lim_{y \rightarrow 0} |y|^2 \int_1^{ -\log_2(|y|) + 2} x^{-2\beta} 2^{2(1-\alpha)x} \, dx \Big/ \frac{|y|^{2\alpha}}{(-\log_2(|y|))^{2\beta}} \\
			& \quad= \lim_{y \downarrow 0}  \int_1^{ -\log_2(y) + 2} x^{-2\beta} 2^{2(1-\alpha)x} \, dx \Big/ \frac{y^{2\alpha - 2}}{(-\log_2(y))^{2\beta}} \\
			&\quad= \lim_{y \downarrow 0}  \Biggl( \frac{-2^{4(1-\alpha)}}{\ln(2)}  \frac{1}{(-\log_2(y) + 2)^{2\beta}}y^{2\alpha -3}\Biggr) \Big/ \Biggl(\frac{(2\alpha - 2)(-\log_2(y)) + 2\beta /\ln(2)}{(-\log_2(y))^{2\beta+1}}y^{2\alpha-3}\Biggr) \\
			&\quad= \frac{2^{4(1-\alpha)}}{\ln(2)} \lim_{y \downarrow 0}  \frac{(-\log_2(y))^{2\beta+1}}{(-\log_2(y) + 2)^{2\beta}\cdot ((2-2\alpha)(-\log_2(y)) - 2\beta /\ln(2))}  \\
			&\quad = \frac{2^{4(1-\alpha)}}{\ln(2)(2 - 2\alpha)}.
		\end{aligned}
	\end{equation}
	Combining \eqref{l9} to \eqref{l8} yields \eqref{lemSob1_6}. This completes the proof of the lemma.
\end{proof}

\section*{Acknowledgement}
\noindent
The author Simon Ellinger is currently employed at the Deutsche Bundesbank. The views expressed in this paper are those of the three authors and do not necessarily reflect the views of the Deutsche Bundesbank.

\bibliographystyle{acm}
\bibliography{bibfile}

\def\cprime{$'$} \def\cprime{$'$}
\begin{thebibliography}{10}

\bibitem{BDG21}
{\sc Butkovsky, O., Dareiotis, K., and Gerencs\'{e}r, M.}
\newblock Approximation of {SDE}s: a stochastic sewing approach.
\newblock {\em Probab. Theory Related Fields 181}, 4 (2021), 975--1034.

\bibitem{DG18}
{\sc Dareiotis, K., and Gerencs\'{e}r, M.}
\newblock On the regularisation of the noise for the {E}uler-{M}aruyama scheme
  with irregular drift.
\newblock {\em Electron. J. Probab. 25\/} (2020), Paper No. 82, 18.

\bibitem{DGL22}
{\sc Dareiotis, K., Gerencs\'{e}r, M., and L\^{e}, K.}
\newblock Quantifying a convergence theorem of {G}y\"{o}ngy and {K}rylov.
\newblock {\em Ann. Appl. Probab. 33}, 3 (2023), 2291--2323.

\bibitem{DiNezzaPalatucciValdinoci2012}
{\sc {Di Nezza}, E., Palatucci, G., and Valdinoci, E.}
\newblock Hitchhiker's guide to the fractional {S}obolev spaces.
\newblock {\em Bull. Sci. Math. 136}, 5 (2012), 521--573.

\bibitem{ELL24}
{\sc Ellinger, S.}
\newblock Sharp lower error bounds for strong approximation of {SDE}s with
  piecewise {L}ipschitz continuous drift coefficient.
\newblock {\em Journal of Complexity 81\/} (2024), 101822.

\bibitem{EMGY24}
{\sc Ellinger, S., M{\"u}ller-Gronbach, T., and Yaroslavtseva, L.}
\newblock On optimal error rates for strong approximation of {SDE}s with a
  drift coefficient of {S}obolev regularity.
\newblock {\em To appear in Ann. Inst. Henri Poincar\'{e} Probab. Stat.\/}
  (2025).

\bibitem{hhmg2019}
{\sc Hefter, M., Herzwurm, A., and M{\"u}ller-Gronbach, T.}
\newblock Lower error bounds for strong approximation of scalar sdes with
  non-lipschitzian coefficients.
\newblock {\em Ann. Appl. Probab. 29}, 1 (2019), 178--216.

\bibitem{HMG2004}
{\sc Hofmann, N., and M\"uller-Gronbach, T.}
\newblock On the global error of {I}t\^o-{T}aylor schemes for strong
  approximation of scalar stochastic differential equations.
\newblock {\em J. Complexity 20}, 5 (2004), 732--752.

\bibitem{HMGR2002}
{\sc Hofmann, N., M\"uller-Gronbach, T., and Ritter, K.}
\newblock Linear vs standard information for scalar stochastic differential
  equations.
\newblock {\em J. Complexity 18}, 2 (2002), 394--414.

\bibitem{Ka96}
{\sc Kallenberg, O.}
\newblock On the existence of universal functional solutions to classical
  {SDE}'s.
\newblock {\em Ann. Probab. 24\/} (1996), 196--205.

\bibitem{ks91}
{\sc Karatzas, I., and Shreve, S.~E.}
\newblock {\em Brownian motion and stochastic calculus}, second~ed., vol.~113
  of {\em Graduate Texts in Mathematics}.
\newblock Springer-Verlag, New York, 1991.

\bibitem{MG02_habil}
{\sc M{\"u}ller-Gronbach, T.}
\newblock Strong approximation of systems of stochastic differential equations.
\newblock {\em Habilitation thesis, TU Darmstadt\/} (2002), iv+161.

\bibitem{m04}
{\sc M{\"u}ller-Gronbach, T.}
\newblock Optimal pointwise approximation of {SDE}s based on {B}rownian motion
  at discrete points.
\newblock {\em Ann. Appl. Probab. 14}, 4 (2004), 1605--1642.

\bibitem{MGY23}
{\sc M\"{u}ller-Gronbach, T., and Yaroslavtseva, L.}
\newblock Sharp lower error bounds for strong approximation of {SDE}s with
  discontinuous drift coefficient by coupling of noise.
\newblock {\em Ann. Appl. Probab. 33\/} (2023), 902--935.

\bibitem{NS19}
{\sc Neuenkirch, A., and Sz\"{o}lgyenyi, M.}
\newblock The {E}uler-{M}aruyama scheme for {SDE}s with irregular drift:
  convergence rates via reduction to a quadrature problem.
\newblock {\em IMA J. Numer. Anal. 41}, 2 (2021), 1164--1196.

\bibitem{Pages2018}
{\sc Pag\`es, G.}
\newblock {\em Numerical probability}.
\newblock Universitext. Springer, Cham, 2018.
\newblock An introduction with applications to finance.

\bibitem{QZ02}
{\sc Qian, Z., and Zheng, W.}
\newblock Sharp bounds for transition probability densities of a class of
  diffusions.
\newblock {\em C. R. Math. Acad. Sci. Paris 335}, 11 (2002), 953--957.

\bibitem{Sickel2021}
{\sc Sickel, W.}
\newblock On the regularity of characteristic functions.
\newblock In {\em Anomalies in Partial Differential Equations\/} (Cham, 2021),
  M.~Cicognani, D.~Del~Santo, A.~Parmeggiani, and M.~Reissig, Eds., Springer
  International Publishing, pp.~395--441.

\bibitem{VZ2008}
{\sc van~der Vaart, A.~W., and van Zanten, J.~H.}
\newblock Reproducing kernel {H}ilbert spaces of {G}aussian priors.
\newblock In {\em Pushing the limits of contemporary statistics: contributions
  in honor of {J}ayanta {K}. {G}hosh}, vol.~3 of {\em Inst. Math. Stat. (IMS)
  Collect.} Inst. Math. Statist., Beachwood, OH, 2008, pp.~200--222.

\bibitem{V80}
{\sc Veretennikov, A.~J.}
\newblock Strong solutions and explicit formulas for solutions of stochastic
  integral equations.
\newblock {\em Mat. Sb. (N.S.) 111(153)}, 3 (1980), 434--452, 480.

\bibitem{Zyg2002}
{\sc Zygmund, A.}
\newblock {\em Trigonometric series. {V}ol. {I}, {II}}, third~ed.
\newblock Cambridge Mathematical Library. Cambridge University Press,
  Cambridge, 2002.
\newblock With a foreword by Robert A. Fefferman.

\end{thebibliography}

\end{document}